\newtheorem{theorem}{Theorem}
\newtheorem{lemma}[theorem]{Lemma}
\newtheorem{proposition}[theorem]{Proposition}
\newtheorem{corollary}[theorem]{Corollary}
\theoremstyle{definition}
\newtheorem{definition}[theorem]{Definition}
\newtheorem{example}[theorem]{Example}
\theoremstyle{remark}
\newtheorem{remark}[theorem]{Remark}
\numberwithin{equation}{section}
\numberwithin{theorem}{section}
\newcommand\thref{Theorem \ref}
\newcommand\leref{Lemma \ref}
\newcommand\prref{Proposition \ref}
\newcommand\deref{Definition \ref}
\newcommand\exref{Example \ref}
\newcommand\seref{Section \ref}
\renewcommand{\comment}[1]{}
\def\ii{\mathrm{i}} 
\def\vac{\boldsymbol{1}}
\def\CC{\mathbb{C}}
\def\RR{\mathbb{R}}
\def\ZZ{\mathbb{Z}}
\def\NN{\mathbb{N}}
\def\N{\mathcal{N}}
\def\A{\mathcal{A}}
\def\ph{\varphi}
\def\om{\omega}
\def\si{\sigma}
\def\lieh{\mathfrak{h}}
\def\hh{\hat{\lieh}}
\def\hhp{\hh_\ph}
\def\al{\alpha}
\def\be{\beta}
\def\th{\theta}
\def\ga{\gamma}
\def\de{\delta}
\def\la{\lambda}
\def\ep{\varepsilon}
\def\ze{\zeta}
\def\Id{\mathrm{Id}}
\def\P{\mathcal{P}}
\def\La{\Lambda}
\newcommand{\lie}[1]{\mathfrak{#1}}
\newcommand{\h}{\mathfrak{h}}
\newcommand{\nop}[1]{\textnormal{{:}}#1\textnormal{{:}}}
\newcommand{\nopm}[1]{{}^\circ_\circ \hspace{-.7sp}#1 \hspace{-.5sp} {}^\circ_\circ}
\DeclareMathOperator\re{Re}
\DeclareMathOperator\im{Im}
\DeclareMathOperator\LF{LFie}
\DeclareMathOperator\BLF{\overline{LFie}}
\DeclareMathOperator\End{End}
\DeclareMathOperator\Span{span}
\DeclareMathOperator\ind{Ind}
\DeclareMathOperator\aut{Aut}
\DeclareMathOperator\der{Der}
\DeclareMathOperator\res{Res}
\DeclareMathOperator\sgn{sgn}
\DeclareMathOperator\Li{\mathrm{Li}}
\begin{document}

\title{Twisted logarithmic modules of lattice vertex algebras}
\author{Bojko Bakalov}
\address{Department of Mathematics, North Carolina State University, 
Raleigh, NC 27695, USA}
\email{bojko\_bakalov@ncsu.edu}

\author{McKay Sullivan}
\address{Department of Mathematics, Dixie State University, Saint George, UT 84770, USA}
\email{mckay.sullivan@dixie.edu}

\begin{abstract}
Twisted modules over vertex algebras formalize the relations among twisted vertex operators and have applications to conformal field theory and representation theory. A recent generalization, called twisted logarithmic module, involves the logarithm of the formal variable and is related to logarithmic conformal field theory. We investigate twisted logarithmic modules of lattice vertex algebras, reducing their classification to the classification of modules over a certain group. This group is a semidirect product of a discrete Heisenberg group and a central extension of the additive group of the lattice.
\end{abstract}


\date{August 28, 2017}


\subjclass[2010]{Primary 17B69; Secondary 81R10, 33B15}

\maketitle

\tableofcontents


\section{Introduction}

The notion of a \textit{vertex algebra} was introduced by Borcherds \cite{Bo} and developed in \cite{FLM2, K2, FB, LL, KRR} among many other works. 
It provides a rigorous algebraic formulation of two-dimensional chiral conformal field theory \cite{BPZ, Go, DMS}, 
and is a powerful tool in the representation theory of infinite-dimensional Lie algebras \cite{K1,KRR}. 
The main motivating examples of vertex algebras were the \emph{lattice vertex algebras} \cite{Bo},
which generalized the Frenkel--Kac realization of affine Kac--Moody algebras in terms of vertex operators \cite{FK}. 
In many applications vertex operators appeared in a ``twisted'' form, as in the principal realization of affine Kac--Moody algebras \cite{LW,KKLW} and in the
Frenkel--Lepowsky--Meurman construction of a vertex algebra with a natural action of the Monster group on it \cite{FLM2,Ga}.
This led to the notion of a \emph{twisted module} of a vertex algebra \cite{D,FFR} that axiomatizes the properties of twisted vertex operators \cite{KP,Le,FLM1}.
One of the important applications of twisted modules is to orbifolds in conformal field theory (see e.g.\ \cite{DVVV,KT,DLM,BE}). 
Twisted modules over lattice vertex algebras were classified in \cite{D} in a special case and in the general case in \cite{BK}.

The notion of a $\ph$-twisted module was defined in \cite{D,FFR} only for finite-order automorphisms $\ph$.
In particular, both $\ph$ and the Virasoro $L_0$ operator were assumed to be semisimple. The idea of deforming $L_0$ with an automorphism of infinite order first appeared in \cite{AM2}.
Shortly thereafter, the notion of a $\varphi$-twisted module was generalized by Huang to the case when $\varphi$ is not semisimple  \cite{H}, and was
further developed in \cite{Bak,BS,HY,Y}.
This generalization was motivated by logarithmic conformal field theory (see e.g.\ \cite{Kau1,AM,CR}), and has promising applications to Gromov--Witten theory (cf.\ \cite{DZ,M,MT,BM,BW}).
The main feature of such modules is that the twisted fields involve the logarithm of the formal variable; for this reason we also call them \emph{twisted logarithmic modules}.

In \cite{Bak} the first author gave a different definition of twisted logarithmic modules, which allowed him to prove a Borcherds identity for these modules. Twisted logarithmic modules according to Huang's original definition are also twisted logarithmic modules according to the definition in \cite{Bak}, and under certain additional assumptions, the definition in \cite{Bak} implies Huang's definition  (see \cite{HY} for details).

In this paper we initiate the study of $\ph$-twisted modules over a lattice vertex algebra $V_Q$, where $Q$ is an integral lattice and $\ph$ is an arbitrary automorphism of $Q$ (which is then lifted to an automorphism of $V_Q$). The first step is to restrict such a module $M$ to the Heisenberg subalgebra $B^1(\lieh)\subset V_Q$, where $\lieh=\CC\otimes_\ZZ Q$ is a vector space with the bilinear form induced from $Q$.
Then $\ph$ is also an automorphism of $B^1(\lieh)$, and $M$ is a $\ph$-twisted $B^1(\lieh)$-module. Such modules were studied in our previous works \cite{Bak,BS}. 

The next step is to find how the action of $B^1(\lieh)$ on $M$ extends to $V_Q$, i.e., to find the twisted vertex operators $Y(e^\la,z)$ for $\la\in Q$. Similarly to \cite{KP,Le,FLM1,D,BK}, these vertex operators are expressed as certain exponentials in the Heisenberg generators times some operators $U_\la$ commuting with the nonzero modes of the Heisenberg algebra. Thus, the classification of $\ph$-twisted $V_Q$-modules is reduced to that of modules over the $\ph$-twisted Heisenberg algebra that are equipped with a compatible action of a certain group $G_\ph$. This is the main result of the present paper (see \thref{main} below). 
Although we follow the approach of \cite{BK}, our setting here is more involved. For instance, when $\ph$ has a finite order, the group $G_\ph$ contains as a subgroup a torus corresponding to the subspace of $\ph$-invariant vectors in $\lieh$. In the general case, this torus is replaced by a \emph{Heisenberg group}. Another difficulty is that when computing the products of twisted vertex operators $Y(e^\la,z)$, we need to utilize \emph{special functions} such as the Lerch transcendent $\Phi$ and the digamma function $\Psi$ (see e.g.\ \cite{BE1,BE2}).

Here is a more detailed description of the contents of the paper. In \seref{sprelim}, we briefly recall the definition of a vertex (super)algebra and our main object, lattice vertex algebras (see \cite{FLM2, K2, FB, LL, KRR}). Then we review the definition and properties of twisted logarithmic modules of vertex algebras, following \cite{Bak}. 

In \seref{sphtwvo}, we start our investigation of the structure of $\ph$-twisted modules over a lattice vertex algebra $V_Q$, where the automorphism $\ph$ is lifted from an automorphism of the lattice $Q$. We find an expression for the twisted vertex operators $Y(e^\la,z)$ in terms of the twisted Heisenberg algebra and certain operators $U_\la$ (see \thref{twistedvop}).

Next, in \seref{sptvo}, we compute the products of twisted vertex operators $Y(e^\la,z)$, 
which allows us to determine the relations satisfied by the operators $U_\la$ (see \thref{commutant}).

Motivated by these relations, in \seref{srtgt}, we introduce a group $G$ that acts on every $\ph$-twisted $V_Q$-module. Imposing the $\ph$-equivariance condition for $\ph$-twisted modules, we define the group $G_\ph$ as a quotient of $G$ by a certain central subgroup.
We prove that, conversely, every module over the $\ph$-twisted Heisenberg algebra equipped with a compatible action of $G_\ph$ can be extended to a $\ph$-twisted $V_Q$-module 
(see \thref{main}). 

Finally, in \seref{septvm}, we construct explicit examples from $3$ and $4$-dimensional lattices.

In a sequel we plan to classify the irreducible modules of the group $G_\ph$, thus completing the classification and explicit construction of all irreducible $\ph$-twisted $V_Q$-modules.

\section{Preliminaries}\label{sprelim}

In this section, we review the definitions and some properties of vertex algebras, lattice vertex algebras, and twisted logarithmic modules.

\subsection{Vertex Algebras}

Let us quickly recall the definition of a vertex (super)algebra following \cite{K2} (see also \cite{FLM2, FB, LL, KRR}).
A \emph{vertex algebra}  is a vector superspace $V=V_{\bar0} \oplus V_{\bar1}$ with a distinguished even vector $\vac\in V_{\bar0}$ 
(vacuum vector), together with a parity-preserving linear map (state-field correspondence)
\begin{equation*}
Y(\cdot,z)\cdot \colon V \otimes V \to V(\!(z)\!) = V[[z]][z^{-1}].
\end{equation*}
Thus, for every $a\in V$, we have the \emph{field} $Y(a,z) \colon V \to V(\!(z)\!)$. This field can be viewed as
a formal power series from $(\End V)[[z,z^{-1}]]$, which involves only finitely many negative powers of $z$ when applied to any vector.
The coefficients in front of powers of $z$ in this expansion are known as the
\emph{modes} of $a$:
\begin{equation*}
Y(a,z) = \sum_{n\in\ZZ} a_{(n)} \, z^{-n-1}, \qquad
a_{(n)} \in \End V.
\end{equation*}
Then
\begin{equation*}
a_{(n)} = \res_z z^n Y(a,z),
\end{equation*}
where the formal residue $\res_z$ is defined as the coefficient of $z^{-1}$.

The following axioms must hold in a vertex algebra $V$:
\begin{equation*}
\text{(vacuum axioms)} \quad
Y(\vac,z)=\Id, \quad Y(a,z)\vac-a \in zV[[z]],
\end{equation*}
where $\Id$ denotes the identity operator;
\begin{equation*}
\text{(translation covariance)} \quad
\bigl[T,Y(a,z)\bigr]=\partial_z Y(a,z),
\end{equation*}
where 
$Ta=a_{(-2)}\vac$;
\begin{equation*}
\text{(locality)} \quad
(z-w)^N Y(a,z) Y(b,w) = (-1)^{p(a)p(b)} (z-w)^N Y(b,w) Y(a,z)
\end{equation*}
for all $a,b\in V$ of parities $p(a),p(b)$, respectively, where $N\ge0$ is an integer depending on $a,b$.

\subsection{Lattice Vertex Algebras}\label{sslattice}

Now we recall the definition and some properties of lattice vertex algebras; see \cite{FLM2, K2, FB, LL}.
Let $Q$ be an \emph{integral lattice} of rank $d$, i.e., a free abelian group on $d$ generators together with 
a nondegenerate symmetric bilinear form $(\cdot | \cdot) \colon Q \times Q \to \ZZ$.
Using bilinearity, we extend $(\cdot | \cdot)$ to the vector space $\lie{h} = \CC \otimes_\ZZ Q$, which we view as an abelian Lie algebra.

The \emph{Heisenberg Lie algebra} is defined as the affinization $\hh = \lieh[t,t^{-1}]\oplus \CC K$ with brackets
\begin{equation}\label{sbbrackets}
[at^m,bt^n] = m \delta_{m,-n}(a|b)K, \qquad [\hh,K] = 0 \qquad (m,n \in \ZZ).
\end{equation}
We will use the notation $a_{(m)}=at^m$. Then the \emph{free bosons}
\begin{equation*}
a(z) = \sum_{m\in \ZZ}a_{(m)}z^{-m-1} \qquad (a \in \lieh)
\end{equation*}
satisfy
\begin{equation*}
[a(z),b(w)] = (a|b)K \, \partial_w \de(z,w),
\end{equation*}
where
\begin{equation}\label{delta}
\de(z,w) = \sum_{m \in\ZZ} z^{-m-1} w^{m}
\end{equation}
is the \emph{formal delta function}.

The generalized Verma module, also known as the \emph{Fock space},
\begin{equation*}
B^1(\h) = \ind_{\h[t]\oplus \CC K}^{\hat{\h}} \CC
\end{equation*}
is constructed by letting $\h[t]$ act trivially on $\CC$ and $K$ act as $\Id$. Then $B^1(\h)$ has the structure of a vertex algebra called the \textit{free boson algebra} or the \emph{Heisenberg vertex algebra}.
The commutator \eqref{sbbrackets} is equivalent to the following $n$-th products:
\begin{align}\label{nthprodb}
a_{(0)}b &= 0, & a_{(1)}b &= (a|b)\vac, & a_{(j)}b &= 0 &&  (j \geq 2)
\end{align} 
for $a,b \in \h$, where $\vac=1$ is the vacuum vector in $B^1(\h)$.
Here and further, we identify $a\in\lieh$ with $a_{(-1)}\vac \in B^1(\h)$.

Now let $\ep\colon Q \times Q \to \{\pm 1\}$ be a bimultiplicative function such that
\begin{equation}\label{eplala}
\ep(\la,\la) = (-1)^{|\la|^2(|\la|^2+1)/2}, \qquad \la \in Q.
\end{equation}
A unique such function exists up to equivalence and satisfies
\begin{equation}\label{epprod}
\ep(\la,\mu)\ep(\mu,\la) = (-1)^{(\la |\mu)+|\la |^2|\mu |^2}, \qquad \la, \mu \in Q.
\end{equation}
Let $\CC_\ep[Q]$ be the twisted group algebra, which has a basis $\{e^\la\}_{\la \in Q}$ with multiplication given by 
\begin{equation}\label{expmult}
e^\la e^\mu = \ep(\la,\mu) e^{\la + \mu}, \qquad \la,\mu \in Q.
\end{equation}

The representation of $\hat\lieh$ can be extended to the space $V_Q =B^1(\h) \otimes \CC_\ep[Q]$ via
\begin{equation*}
(at^m)(s \otimes e^\la) = (at^m + \de_{m,0}(a |\la))s \otimes e^\la. 
\end{equation*}
In particular, we have
\begin{equation}\label{azerola}
a_{(m)} e^\la = \de_{m,0} (a|\la) e^{\la}, \qquad a \in \lieh, \;  m\ge0, \; \la \in Q.
\end{equation}
The twisted group algebra $\CC_\ep[Q]$ can also be represented on $V_Q$ by the following action:
\begin{equation*}
e^\la(s \otimes e^\mu) = \ep(\la,\mu)s \otimes e^{\la+\mu}.
\end{equation*}

From now on we will write $e^\la$ (respectively, $a$) for $1 \otimes e^\la\in V_Q$ (respectively, $a \otimes 1)$. 
The fields
\begin{equation}\label{currents}
Y(a,z) = \sum_{m \in \ZZ}a_{(m)}z^{-m-1}, \qquad a \in \h,
\end{equation}
on $V_Q$ are called \emph{currents}. 
The fields 
\begin{equation}\label{voperators}
\begin{split}
Y(e^\la,z) &= e^\la \, \nop{\exp \int Y(\la,z)}\\
&=e^\la z^\la \exp \Bigl{(} \sum_{n=1}^\infty \la_{(-n)}\frac{z^{n}}{n}\Bigr{)}  
\exp \Bigl{(}\sum_{n=1}^\infty \la_{(n)}\frac{z^{-n}}{-n}\Bigr{)}
\end{split}
\end{equation}
are called \emph{vertex operators}, where $z^\la$ acts by
\begin{equation*}
z^\la (s \otimes e^\mu )= z^{(\la|\mu)} (s \otimes e^\mu).
\end{equation*}

The currents \eqref{currents} and the vertex operators \eqref{voperators} generate a vertex algebra structure on $V_Q$, called a \emph{lattice vertex algebra}, where the vacuum vector is given by $1\otimes e^0$, the infinitesimal translation operator acts by
\begin{equation}\label{texp}
Te^\la = \la_{(-1)}e^\la, \qquad [T,a_{(m)}]=-m a_{(m-1)},
\end{equation}
and the parity in $V_Q$ is given by $p(a \otimes e^\la) = |\la|^2 \;\mathrm{mod}\; 2\ZZ$.
Reminiscent of \eqref{expmult}, the following formula holds in $V_Q$:
\begin{equation}\label{VAexpmult}
e^\la_{(-1-(\la|\mu))}e^\mu = \ep(\la,\mu)e^{\la+\mu}.
\end{equation}
The locality of the vertex operators $Y(e^\la,z)$ and $Y(e^\mu,z)$ is established by showing that
\begin{equation}\label{voploc}
\begin{split}
(z_1-z_2&)^N Y(e^\la,z_1)Y(e^\mu,z_2) \\
&= (-1)^{|\la|^2|\mu|^2}(z_1-z_2)^N Y(e^\mu,z_2)Y(e^\la,z_1),
\end{split}
\end{equation}
where $N=\max(0,-(\la|\mu))$.

The currents \eqref{currents} generate the Heisenberg vertex algebra $B^1(\lieh)$ as a subalgebra of $V_Q$.
Let $\{v_1,\ldots,v_d\}$ be a basis of $\lieh$, and $\{v^1,\ldots,v^d\}$ be its dual basis with respect to $(\cdot|\cdot)$.  Then
\begin{equation}\label{fbomega}
\om = \frac{1}{2}\sum_{i=1}^dv^i_{(-1)}v_i \in B^1(\lieh) \subset V_Q
\end{equation}
is a \emph{conformal vector} (see e.g.\ \cite{K2}). The corresponding Virasoro field
\begin{equation*}
L(z) = Y(\om,z) = \sum_{n \in \ZZ}L_n z^{-n-2}
\end{equation*}
has central charge $d=\dim\lieh$. 

\subsection{Twisted Logarithmic Modules}

Now we review the definition and properties of twisted logarithmic modules of vertex algebras, following \cite{Bak}. The main distinguishing feature of such modules is the presence in the fields of a second formal variable, $\ze$, which plays the role of $\log z$. 

For a vector space $M$ over $\CC$, a \emph{logarithmic field} on $M$ is a formal series 
\begin{equation}\label{azze}
a(z) = a(z,\ze) = \sum_{\al \in \A} \sum_{m \in \al} a_m(\ze) z^{-m-1}, 
\end{equation}
where $\A$ is a finite subset of $\CC/\ZZ$, $a_m(\ze) \colon M \to M[[\ze]]$ is a power series in $\ze$ with coefficients in $\End M$, and for any $v \in M$ we have $a_m(\ze)v = 0$ for $\re m \gg 0$. Although $a(z)$ depends on both $z$ and $\ze$, for brevity we will write explicitly only the dependence on $z$.

While in \cite{Bak} the variables $z$ and $\ze$ were considered independent, here we will impose the relation that $z=e^\ze$. More precisely, we will identify a logarithmic field $a(z,\ze)$ with $z^c e^{-c\ze} a(z,\ze)$ for any $c\in\CC$, and extend this by linearity. In other words, if we denote the space of formal series \eqref{azze} by $\BLF(M)$, then the space of all logarithmic fields on $M$ is the quotient space 
\begin{equation*}
\LF(M) = \BLF(M) \Big/ \sum_{c\in\CC} (z^c-e^{c\ze}) \BLF(M).
\end{equation*}
Note that the operators of differentiation with respect to $z$ and $\ze$,
\begin{equation*}
D_z = \partial_z + z^{-1}\partial_\ze, \qquad D_\zeta = z \partial_z + \partial_\ze,
\end{equation*}
commute with $z^c e^{-c\ze}$ and
are well defined on the quotient space $\LF(M)$.

Let $a(z), b(z) \in \LF(M)$ be logarithmic fields with parities $p(a)$ and $p(b)$, respectively. 
They are called \emph{local} if there exists an integer $N \geq 0$ such that
\begin{equation*}
(z_1-z_2)^Na(z_1)b(z_2)=(-1)^{p(a)p(b)}(z_1-z_2)^Nb(z_2)a(z_1).
\end{equation*}
Then their \emph{$n$-th product} is defined by \cite{Bak}:
\begin{equation}\label{TLMnthprod}
(a(z)_{(n)}b(z))v = \frac{D_{z_1}^{N-1-n}}{(N-1-n)!} \Bigl{(}(z_1-z_2)^N a(z_1)b(z_2)v\Bigr{)}\bigg{|}_{\substack{z_1=z_2=z \\\ze_1=\ze_2=\ze}},
\end{equation}
 for $v \in M$ and an integer $n\leq N-1$. For $n\geq N$, we set the $n$-th product equal to 0.
 Note that the right-hand side of \eqref{TLMnthprod} remains the same if we replace $N$ by $N+1$.

\begin{definition}[\hspace{1sp}\cite{Bak}]\label{defphtwisted}
Given a vertex algebra $V$ and an automorphism $\ph$ of $V$, a \emph{$\ph$-twisted $V$-module} is a vector superspace $M$ equipped with an even linear map $Y\colon V \to \LF(M)$
such that $Y(\vac,z) = \Id$ is the identity on $M$, and $Y(V)$ is a local collection of logarithmic fields satisfying:
\begin{equation}\label{phequiv}
\text{($\ph$-equivariance)} \quad
Y(\ph a,z) = e^{2\pi \ii D_\ze}Y(a,z),
\end{equation}
and
\begin{equation}\label{lognth}
\text{($n$-th product identity)} \quad
Y(a_{(n)}b,z) = Y(a,z)_{(n)}Y(b,z)
\end{equation}
for all $a,b \in V$ and $n \in \ZZ$.
\end{definition}

We note that $Y(a,z)$ depends on $\zeta$ as well as $z$, though we do not indicate this in the notation. 
As a consequence of the definition, the logarithmic fields satisfy (cf. \cite{H}):
\begin{equation}\label{infDz}
Y(Ta,z) = D_zY(a,z), \qquad a \in V.
\end{equation}

Let $\bar{V} \subseteq V$ be the subalgebra of $V$ on which $\ph$ is \emph{locally-finite}, i.e., the set of vectors $a \in V$ such that $\Span\{\ph^i a \,|\, i \geq 0\}$
is finite-dimensional. Then we can write
\begin{equation}\label{sigN}
\ph |_{\bar{V}} = \si e^{-2\pi \ii \N}, \qquad \sigma \in \aut(\bar{V}), \;\; \N \in  \der(\bar{V}),
\end{equation}
where 
$\sigma$ and $\N$ commute, $\sigma$ is semisimple, and $\N$ is locally nilpotent on $\bar{V}$.
The last condition means that for every $a\in\bar V$ we have $\N^i a=0$ for $i\gg0$.
As in \cite{Bak}, we will assume that for $a\in\bar V$ the logarithmic field $Y(a,z)\in\LF(M)$ has a (unique) representative
$\bar Y(a,z)\in\BLF(M)$ that is a polynomial in $\ze$.
Then $\ph$-equivariance implies that 
\begin{equation}\label{tfieldx}
\bar Y(\si a,z) = e^{2\pi \ii z\partial_z} \bar Y(a,z), \qquad 
\bar Y(\N a,z) = -\partial_{\ze} \bar Y(a,z) 
\end{equation}
for $a\in\bar V$.

\begin{remark}
Our current definition of $\LF(M)$ is more general than the definition in \cite{Bak} in order to 
allow logarithmic fields that are power series in $\ze$ when $\ph$ is not locally finite, as is the case for twisted logarithmic modules of lattice vertex algebras. In the case when $\ph$ is locally finite, we have $\bar{V}=V$. By choosing the representative $\bar{Y}(a,z)$ for each $Y(a,z) \in \LF(M)$, we may equate the more general definition we use in this paper to the original definition given in \cite{Bak}.
\end{remark}

\begin{remark}
Huang's definition of a twisted logarithmic module \cite{H} assumes, among other things, that the vertex algebra $V$ is graded by finite-dimensional subspaces preserved by the automorphism $\ph$. As a consequence, $\ph$ is locally finite in \cite{H}. It was shown in \cite{HY} that, under the additional assumptions of \cite{H}, the definitions of \cite{H} and \cite{Bak} are equivalent.
\end{remark}

We denote the eigenspaces of $\sigma$ in $\bar V$ by 
\begin{equation*}
V_\al = \{a \in \bar{V} \,|\, \sigma a = e^{-2\pi\ii \al} a\}, \qquad \al \in \CC/\ZZ.
\end{equation*}
It follows from \eqref{tfieldx} that 
\begin{equation*}
X(a,z) = \bar Y(e^{\zeta \N}a,z) = \bar Y(a,z)\big|_{\zeta = 0}
\end{equation*}
is independent of $\ze$, and the exponents of $z$ in $X(a,z)$ belong to $-\al$ for $a\in V_\al$.
For $m \in \al$, the \emph{$(m+\N)$-th mode} of $a\in V_\al$ is defined as
\begin{equation*}
a_{(m+\N)} = \res_z z^m X(a,z).
\end{equation*}
Then
\begin{equation}\label{tfield}
\bar Y(a,z) = X(e^{-\zeta \N}a,z) = \sum_{m \in \al} (z^{-m-\N-1}a)_{(m+\N)},
\end{equation}
where we use the notation $z^{-\N}=e^{-\ze \N}$.

One of the main results of \cite{Bak} is the Borcherds identity for twisted logarithmic modules.
Here we will only need two of its consequences. One is the following \emph{commutator formula} for modes:
\begin{equation}\label{lcomm}
\bigl{[}a_{(m+\N)},b_{(n+\N)} \bigr{]}= \sum_{j=0}^\infty\Bigl{(} \bigl{(} \binom{m+\N}{j} a\bigr{)}_{(j)}b \Bigr{)}_{(m+n-j+\N)}
\end{equation}
for $a \in V_\al$, $b \in V_\be$, $m\in\al$, and $n\in\be$. We also have
\begin{equation}\label{lcommf}
\bigl{[}a_{(m+\N)},Y(b,z) \bigr{]}= \sum_{j=0}^\infty Y\Bigl{(} \bigl{(} \binom{m+\N}{j} z^{m-j+\N} a\bigr{)}_{(j)}b,z \Bigr{)}
\end{equation}
for $a \in V_\al$, $b \in V$, $m\in \al$.
Note that in \eqref{lcommf} the vector $b$ is not necessarily in $\bar V$.
This is important in the case of twisted logarithmic modules of lattice vertex algebras because for them $\bar V\neq V$ in general (see Remark \ref{notlocfin} below).

Another corollary of the Borcherds identity is a formula for the $(-1)$-st product.
For $a \in V_\al$ and $b \in V$, the \emph{normally ordered product} of $Y(a,z)$ and $Y(b,z)$ is defined by
\begin{align*}
\nop{Y(a,z) Y(b,z)} = &\sum_{\substack{m \in \al \\ \re m \leq -1}} (z^{-m-\N-1}a)_{(m+\N)} Y(b,z) \\
+ (-1)^{p(a)p(b)} &\sum_{\substack{m \in \al \\ \re m > -1}} Y(b,z) (z^{-m-\N-1}a)_{(m+\N)}.
\end{align*}
Then
\begin{equation}\label{twistednop}
\nop{Y(a,z)Y(b,z)} = \sum_{j=-1}^{N-1}z^{-j-1}Y\Bigl{(} \bigl{(} \binom{\al_0 + \N}{j + 1} a\bigr{)}_{(j)}b,z\Bigr{)}
\end{equation}
for $a \in V_\al$ and $b \in V$, where $\al_0\in\al$ is the unique element such that $-1 < \re \al_0 \leq 0$.

\subsection{Twisted Logarithmic Modules of Free Bosons}\label{ssbosons}

Consider a vector space $\lieh$ with a nondegenerate symmetric bilinear form $(\cdot | \cdot)$,
as in \seref{sslattice}.
Let $\ph$ be an automorphism of $\h$ such that $(\cdot|\cdot)$ is $\ph$-invariant, i.e.,
\begin{equation}\label{phinv}
(\ph a|\ph b) =(a|b), \qquad a,b \in \lieh.
\end{equation}
As in \eqref{sigN}, we write 
\begin{equation}\label{phsplit}
\ph=\si e^{-2 \pi \ii \N}, \qquad \si \N = \N \si,
\end{equation}
where $\si$ is semisimple and $\N$ is nilpotent on $\lieh$. 
Then \eqref{phinv} is equivalent to 
\begin{equation}\label{inv}
(\si a | \si b) = (a|b), \qquad (\N a|b) + (a | \N b) = 0
\end{equation}
for all $a,b \in \lieh$. We denote the eigenspaces of $\sigma$ by 
\begin{equation*}
\h_\al = \{a \in \h \,|\, \sigma a = e^{-2\pi\ii \al} a\}, \qquad \al \in \CC/\ZZ.
\end{equation*}

\begin{definition}[\hspace{1sp}\cite{Bak}]\label{def:twaff}
The \emph{$\ph$-twisted affinization} (or \emph{$\ph$-twisted Heisenberg algebra})
$\hhp$ is the Lie algebra spanned by a central element $K$ and elements $a_{(m+\N)}=at^m$ $(a \in \h_\al, \, m \in \al)$, with the Lie bracket
\begin{equation*}
[a_{(m+\N)},b_{(n+\N)}]=\delta_{m,-n}((m+\N)a|b)K
\end{equation*}
for $a \in \h_\al$, $b \in \h_\beta$, $m \in \al,$ $n\in \beta$.
\end{definition}

An $\hhp$-module $M$ is called \textit{restricted} if for every $a \in \lieh_\al$, $m \in \al$, $v \in M$, there is an integer $L$ such that $(at^{m+i})v =0$ for all $i \in \ZZ,$ $i \geq L$. We note that every highest weight $\hhp$-module is restricted (see \cite{K1}).
The automorphism $\ph$ naturally induces automorphisms of $\hat{\h}$ and $B^1(\h)$, which we will denote again by $\ph$.  Then every $\ph$-twisted $B^1(\lieh)$-module is a restricted $\hhp$-module and, conversely, every restricted $\hhp$-module uniquely extends to a $\ph$-twisted $B^1(\lieh)$-module \cite[Theorem 6.3]{Bak}. 

We split $\CC$ as a disjoint union of subsets $\CC^+$, $\CC^-=-\CC^+$ and $\{0\}$ where 
\begin{equation}\label{cplus}
\CC^+ = \{\gamma \in \CC \,|\, \re \gamma > 0 \} \cup \{ \gamma \in \CC \,|\, \re \gamma = 0, \, \im \gamma > 0\}. 
\end{equation}
Then the $\ph$-twisted Heisenberg algebra $\hat{\h}_\ph$ has a triangular decomposition 
\begin{equation}\label{wtriangle}
\hat{\h}_\ph = \hat{\h}_\ph^- \oplus \hat{\h}_\ph^0 \oplus \hat{\h}_\ph^+,
\end{equation}
 where
\begin{equation*}
\hat{\h}^\pm_\ph = \Span\{at^m \,|\, a \in \h_\al, \, \al \in \CC/\ZZ, \, m \in \al \cap \CC^\pm\}
\end{equation*}
and
\begin{equation*}
\hat{\h}_\ph^0 = \Span\{at^0 \,|\, a \in \h_0\}\oplus \CC K.
\end{equation*}
Starting from an $\hat{\h}^0_\ph$-module $R$ with $K=\Id$, the (generalized) \emph{Verma module} is defined by
\begin{equation*}
M_\ph(R) = \ind_{\hat{\h}_\ph^+\oplus \hat{\h}_\ph^0}^{\hat{\h}_\ph}R  ,
\end{equation*}
where $\hat{\h}_\ph^+$ acts trivially on $R$. These are $\ph$-twisted $B^1(\lieh)$-modules. 

In order to describe such modules explicitly, the following canonical forms for automorphisms $\ph$ of $\lieh$ preserving $(\cdot | \cdot )$ are used (see \cite{Bak}). 
In the next two examples, $\lieh$ is a vector space with a basis $\{v_1,\ldots,v_d\}$ such that $(v_i|v_j)=\de_{i+j,d+1}$ and 
$\la=e^{-2\pi\ii\al_0}$ for some $\al_0 \in \CC$ such that $-1 < \re \al_0 \leq 0$.

\begin{example}[$d = 2\ell$]\label{ex:symm1}
\begin{align*}
\sigma v_i & = \begin{cases}
\lambda v_i, & 1\leq i \leq \ell,\\
\lambda^{-1}v_i, & \ell+1 \leq i \leq 2\ell,
\end{cases} \quad
\N v_i = \begin{cases}
v_{i+1}, & 1 \leq i \leq \ell-1,\\
-v_{i+1}, & \ell + 1 \leq i \leq 2\ell -1, \\
0, & i = \ell,2\ell.
\end{cases}
\end{align*}
Let us write $\la^{-1}=e^{-2\pi\ii\be_0}$ where $-1 < \re \be_0 \leq 0$. The symmetry 
\begin{equation}\label{symmetry}
v_i \mapsto (-1)^iv_{\ell+i}, \qquad v_{\ell+i} \mapsto (-1)^{i+\ell+1}v_i \qquad (1\leq i \leq \ell)
\end{equation}
 allows us to switch $\la$ with $\la^{-1}$ and assume that $\al_0 \in \CC^- \cup \{0\}$. 
For example, if $\re \al_0 = 0$, then we may switch the roles of $\al_0$ and $\be_0$ if necessary to ensure that $\im \al_0 \leq 0$. 
If $-1 < \re \al_0 < 0$, then again employing the symmetry \eqref{symmetry}, we may assume $-1/2 \leq \re \al_0 < 0$. Finally, if $\re \al_0 = -1/2$, then \eqref{symmetry} allows us to assume $\im \al_0 \geq 0$.
\end{example}

\begin{example}[$d=2\ell-1$ and $\la=\pm 1$]\label{ex:symm2}
\begin{equation*}
\sigma v_i = \lambda v_i, \ \ 1 \leq i \leq 2\ell-1, \qquad \N v_i = \begin{cases}
(-1)^{i+1}v_{i+1}, &1 \leq i \leq 2\ell-2,\\
0, & i = 2\ell-1.
\end{cases}
\end{equation*}
Since $\la=\pm 1$, it follows that $\al_0 =0$ or $-1/2$. 
\end{example}

\begin{proposition}[\hspace{1sp}\cite{Bak}]\label{prop:symm}
Let\/ $\lieh$ be a finite-dimensional vector space endowed with a nondegenerate symmetric bilinear form\/ $(\cdot|\cdot)$ and with commuting linear operators\/ $\sigma$, $\N$ satisfying \eqref{inv}, such that\/ $\sigma$ is invertible and semisimple and\/ $\N$ is nilpotent. Then\/ $\lieh$ is an orthogonal direct sum of subspaces that are as in Examples \ref{ex:symm1} and \ref{ex:symm2}.
\end{proposition}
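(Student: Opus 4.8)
The plan is to reduce the statement to two standard facts of linear algebra: Jordan normal form for a single nilpotent operator, and the classification of nilpotent elements of an orthogonal Lie algebra. First I would decompose $\lieh$ along the eigenspaces of $\si$. Since $\si$ is semisimple and commutes with $\N$, each $\lieh_\al$ is $\N$-invariant, and $\si$-invariance of the form forces $(\lieh_\al\,|\,\lieh_\be)=0$ whenever $\al+\be\notin\ZZ$ (in $\CC/\ZZ$). Hence $\lieh$ is an orthogonal direct sum of the nondegenerate, $\N$-invariant subspaces $\lieh_0$, $\lieh_{1/2}$, and $\lieh_{\al_0}\oplus\lieh_{-\al_0}$ (one summand for each pair of eigenvalues $\la=e^{-2\pi\ii\al_0}\neq\pm1$), so it is enough to put each of these into the required form. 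I will also use throughout that, because $\N$ is skew with respect to $(\cdot|\cdot)$, the orthogonal complement of any $\N$-invariant subspace is again $\N$-invariant; this is what makes an induction on $\dim\lieh$ go through.

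For a summand $V=\lieh_{\al_0}\oplus\lieh_{-\al_0}$ with $\la\neq\pm1$, both $\lieh_{\pm\al_0}$ are isotropic, so the form restricts to a perfect pairing identifying $\lieh_{-\al_0}$ with $\lieh_{\al_0}^*$; the relation $(\N a|b)+(a|\N b)=0$ then identifies $\N|_{\lieh_{-\al_0}}$ with minus the adjoint of $\N|_{\lieh_{\al_0}}$. Thus the whole datum on $V$ is determined by the single nilpotent operator $\N|_{\lieh_{\al_0}}$. I would choose a Jordan basis for it and take the corresponding dual basis on $\lieh_{-\al_0}$: then each Jordan string of length $\ell$, together with its dual string (read in the reverse order), spans a subspace of exactly the form in \exref{ex:symm1}, and strings belonging to distinct Jordan blocks give orthogonal summands. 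This disposes of the case $\la\neq\pm1$.

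The case $V=\lieh_{\al_0}$ with $\la=e^{-2\pi\ii\al_0}=\pm1$ is the substantive one: here $(\cdot|\cdot)$ is symmetric and nondegenerate on $V$ and $\N\in\mathfrak{so}(V)$ is nilpotent. I would induct on $\dim V$. Let $\ell$ be the largest Jordan block size of $\N$; from $(\N^k a|b)=(-1)^k(a|\N^k b)$ and symmetry of the form, the scalar $(\N^{\ell-1}v\,|\,v)$ vanishes automatically when $\ell$ is even. If $\ell$ is odd and $v$ can be chosen with $\N^{\ell-1}v\neq0$ and $(\N^{\ell-1}v|v)\neq0$, then $W=\Span\{\N^iv\}_{i=0}^{\ell-1}$ has anti-triangular Gram matrix with nonzero antidiagonal and is therefore nondegenerate; after replacing $v$ by $v+\sum_{i\geq1}c_i\N^iv$ to kill the entries above the antidiagonal and rescaling the basis, $W$ becomes a subspace of the form in \exref{ex:symm2}, and I pass to $W^\perp$ by induction. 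Otherwise $(\N^{\ell-1}v|v)=0$ for every $v$; then, for $v$ with $\N^{\ell-1}v\neq0$, I would adjust $v$ so its string $W_1$ is totally isotropic, use nondegeneracy of the form to produce a second length-$\ell$ Jordan string $w$ pairing dually with $W_1$, and verify that $W_1\oplus\Span\{\N^iw\}$ is nondegenerate and, after normalization, of the form in \exref{ex:symm1} with $\la=\pm1$; again one passes to the orthogonal complement.

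The main obstacle is precisely this last dichotomy together with the construction of the dual string $w$ — that is, the classification of nilpotent orbits in the orthogonal Lie algebra (even-sized Jordan blocks occur only in hyperbolic pairs, odd-sized ones may also occur singly, carrying a nondegenerate form), applied here to $\N|_{\lieh_0}$ and $\N|_{\lieh_{1/2}}$. Everything else — choosing the scalars $c_i$ that clean up the Gram matrices, and rescaling the chosen bases to match the exact signs $\N v_i=\pm v_{i+1}$ and pairings $(v_i|v_j)=\de_{i+j,d+1}$ of Examples \ref{ex:symm1} and \ref{ex:symm2} — is routine bookkeeping that I would not carry out in detail.
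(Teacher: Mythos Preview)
The paper does not prove this proposition; it is quoted from \cite{Bak} without argument, so there is no proof in the paper to compare yours against. Your outline is the standard one and is correct: reduce via the $\si$-eigenspace decomposition, handle the paired summands $\lieh_{\al_0}\oplus\lieh_{-\al_0}$ ($\la\neq\pm1$) by Jordan form plus duality, and for the self-paired summands $\lieh_0$, $\lieh_{1/2}$ invoke the classification of nilpotent elements in $\mathfrak{so}(V)$ (even Jordan blocks in hyperbolic pairs, odd blocks possibly single).

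One imprecision is worth noting. In your ``otherwise'' case (which in fact occurs exactly when $\ell$ is even: for odd $\ell$ the form $(v,w)\mapsto(\N^{\ell-1}v\,|\,w)$ is symmetric, so if it vanished on the diagonal it would vanish identically and force $\N^{\ell-1}=0$), you cannot in general make the string $W_1$ totally isotropic by a substitution $v\mapsto v+\sum_{i\ge1}c_i\N^iv$ as you suggest. For instance, with $\ell$ even the value $(\N^{\ell-2}v\,|\,v)$ is unchanged by any such substitution, since every correction term involves $(\N^{m}v\,|\,v)$ with $m\ge\ell-1$, and these all vanish. One must instead choose the cyclic vector $v$ more globally, or construct the pair $v,w$ simultaneously. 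Since you explicitly flag this step as the ``main obstacle'' and defer to the known nilpotent-orbit classification rather than claiming an elementary argument, this is not a gap in your proposal; but the sketch of that particular step, as written, would not go through.
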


The $\ph$-twisted modules corresponding to Examples \ref{ex:symm1} and \ref{ex:symm2} are explicitly constructed in \cite{Bak}. In each case, the action of the Virasoro operators $L_k$ corresponding to the conformal vector \eqref{fbomega}
can be written explicitly using the following proposition. 
In it we will use the notation
\begin{equation}\label{nopm}
\nopm{(at^m)(bt^n)} = 
\begin{cases}
(at^m)(bt^n), & m \in \CC^-,\\
(bt^n)(at^m), & m \in \CC^+\cup\{0\}.
\end{cases}
\end{equation}

\begin{proposition}[\hspace{1sp}\cite{Bak,BS}]\label{prop:bvir}
If\/ $\ph$ is as in Example \ref{ex:symm1}, we have
\begin{equation*}
L_k = \sum_{i=1}^\ell \sum_{m\in\al_0+\ZZ} \nopm{(v^it^{-m})(v_it^{k+m})}-\de_{k,0}\frac{\ell}{2}\al_0(\al_0+1) \,\Id
\end{equation*}
in any\/ $\ph$-twisted\/ $B^1(\lieh)$-module. In the case of Example \ref{ex:symm2}, we have
\begin{equation*}
L_k = \frac12\sum_{i=1}^d \sum_{m\in\al_0+\ZZ} \nopm{(v^it^{-m})(v_it^{k+m})}-\de_{k,0}\frac{d}{4}\al_0(\al_0+1) \,\Id.
\end{equation*}
\end{proposition}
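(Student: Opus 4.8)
The plan is to compute the field $Y(\om,z)$ directly from the conformal vector \eqref{fbomega} and then read off $L_k=\res_z z^{k+1}Y(\om,z)$. First I would note that $\om$ is $\ph$-invariant: since $\ph$ preserves $(\cdot|\cdot)$, the Casimir element $\sum_i v^i\otimes v_i$ is fixed by $\ph\otimes\ph$, so $\sum_i\ph(v^i_{(-1)}v_i)=\sum_i(\ph v^i)_{(-1)}(\ph v_i)=\sum_i v^i_{(-1)}v_i$, i.e.\ $\ph\om=\om$; hence $\om\in V_0$, $\N\om=0$, and $Y(\om,z)$ is $\ze$-independent with only integer powers of $z$, so $L_k$ is well defined. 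By \eqref{lognth}, $Y(\om,z)=\tfrac12\sum_{i=1}^d Y(v^i,z)_{(-1)}Y(v_i,z)$, and I would apply \eqref{twistednop} to each summand with $a=v^i$, $b=v_i$, and $N=2$. Since in $B^1(\lieh)$ the only nonzero products are $c_{(1)}d=(c|d)\vac$ by \eqref{nthprodb}, only the terms $j=-1$ and $j=1$ of \eqref{twistednop} contribute:
\begin{equation*}
Y(v^i,z)_{(-1)}Y(v_i,z)=\nop{Y(v^i,z)Y(v_i,z)}-z^{-2}\Bigl(\binom{\al_0^{(i)}+\N}{2}v^i\,\Big|\,v_i\Bigr)\Id ,
\end{equation*}
where $\al_0^{(i)}$ is the chosen representative of the $\si$-eigenvalue of $v^i$.

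To evaluate that scalar I would combine the skew-symmetry of $\N$ in \eqref{inv}, which gives $(\N^jv^i|v_i)=(-1)^j(v^i|\N^jv_i)$, with the explicit Jordan-chain form of $\N$ in Examples \ref{ex:symm1} and \ref{ex:symm2}: there $\N$ shifts the basis vectors along chains that are isotropic except for the terminal pairing $(v^i|v_i)=1$, so $(v^i|\N^jv_i)=\de_{j,0}$ and therefore $\bigl(\binom{\al_0+\N}{2}v^i|v_i\bigr)=\tfrac12\al_0(\al_0-1)$. Next I would turn $\sum_i\nop{Y(v^i,z)Y(v_i,z)}$ into a mode expression. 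Because $\N$ is skew, $e^{-\ze\N}$ preserves $(\cdot|\cdot)$, so the Casimir is $e^{-\ze\N}\otimes e^{-\ze\N}$-invariant; hence this sum is $\ze$-independent and equals $\sum_i\nop{X(v^i,z)X(v_i,z)}$ with $X(a,z)=\sum_m a_{(m+\N)}z^{-m-1}$. Extracting the coefficient of $z^{-k-2}$ produces, for each $i$, a sum over $m$ of products of the modes $(v^i)_{(-m+\N)}$ and $(v_i)_{(k+m+\N)}$ with the $v^i$-mode on the left exactly when $\re(-m)\le-1$, whereas the mode ordering $\nopm{\cdot}$ of \eqref{nopm} puts it on the left exactly when $-m\in\CC^-$. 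The two prescriptions disagree only on the finitely many \emph{critical} modes, those with $0<\re m<1$ or $\re m=0<\im m$, and for such a mode the difference of the two ordered products equals $m\,\de_{k,0}\,\Id$, read off from the bracket $[(v^i)_{(-m+\N)},(v_i)_{(k+m+\N)}]=-m\,\de_{k,0}\,\Id$ of \deref{def:twaff} (the $\N$-part dropping out since $(\N v^i|v_i)=0$). Finally, using the duality $v^i\leftrightarrow v_i$ one checks that $\tfrac12\sum_{i=1}^{2\ell}\sum_m\nopm{(v^it^{-m})(v_it^{k+m})}=\sum_{i=1}^\ell\sum_m\nopm{(v^it^{-m})(v_it^{k+m})}$ in the setting of \exref{ex:symm1}, while in \exref{ex:symm2} no folding is available, which is why the prefactors $\tfrac12$ and $\tfrac d4$ survive there.

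Putting the pieces together, $L_k$ equals the mode sum in the statement plus a $\de_{k,0}$-multiple of the sum of two scalars: the total reordering anomaly, which is $\tfrac\ell2[S(\al_0)+S(\be_0)]$ in \exref{ex:symm1} and $\tfrac d2 S(\al_0)$ in \exref{ex:symm2}, where $S(\ga_0):=\sum m$ over the critical $m\in\ga_0+\ZZ$ and $\be_0$ is the representative of $\la^{-1}$; and the contribution $-\tfrac\ell4[\be_0(\be_0-1)+\al_0(\al_0-1)]$ (resp.\ $-\tfrac d4\al_0(\al_0-1)$) from the $z^{-2}$ term above. The proof then concludes with the elementary identities $2[S(\al_0)+S(\be_0)]-[\be_0(\be_0-1)+\al_0(\al_0-1)]=-2\al_0(\al_0+1)$ in \exref{ex:symm1} (using that $\al_0+\be_0\in\{0,-1\}$) and $2S(\al_0)-\al_0(\al_0-1)=-\al_0(\al_0+1)$ in \exref{ex:symm2}, each verified directly in the handful of normalization sub-cases. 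I expect the genuinely delicate step to be this last bookkeeping: in the various sub-cases of \exref{ex:symm1} one must correctly identify which modes of $\al_0+\ZZ$ and of $\be_0+\ZZ$ are critical and on which side of the imaginary axis they lie, and then check that the reordering anomalies together with the $\binom{\al_0+\N}{2}$-correction collapse to the single clean quadratic $\al_0(\al_0+1)$ rather than to a messier function of $\al_0$ and $\be_0$.
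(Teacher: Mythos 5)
Your argument is correct and is essentially the argument behind the cited result: the paper gives no proof of \prref{prop:bvir} (it quotes it from \cite{Bak,BS}), and the proof there proceeds exactly as you do --- via \eqref{lognth} and \eqref{twistednop} applied to $\omega$, followed by the reordering from $\nop{\ }$ to $\nopm{\ }$ and the case-by-case bookkeeping of the resulting scalars. The delicate final step works out as you claim: in each normalization sub-case the single critical mode contributes $\al_0+1$, $\al_0$, or nothing, and together with the $\binom{\al_0+\N}{2}$-correction this collapses to $-\al_0(\al_0+1)$ times the stated prefactor.
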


\section{The $\ph$-twisted Vertex Operators}\label{sphtwvo}

In this section, we start our investigation of the structure of $\ph$-twisted modules over a lattice vertex algebra $V_Q$, where the automorphism $\ph$ is lifted from an automorphism of the lattice $Q$. We find an expression for the twisted vertex operators $Y(e^\la,z)$ ($\la\in Q$) in terms of the twisted Heisenberg algebra.
We continue using the notation from \seref{sslattice}.
 

\subsection{Action of the Heisenberg Subalgebra}
Let $\ph$ be an automorphism of the lattice $Q$, so that $\ph$ preserves the bilinear form (see \eqref{phinv}). We extend $\ph$ linearly to the vector space $\lieh = \CC \otimes_\ZZ Q$, denoting the extension again by $\ph$. 
Again we will write $\ph=\si e^{-2 \pi\ii\N}$ as in \eqref{phsplit}. 

The map $Q \times Q \to \{\pm 1\}$ given by $(\la,\mu) \mapsto \ep(\ph\la,\ph\mu)$ is a 2-cocycle satisfying \eqref{eplala}. Since $\ep$ is unique up to equivalence, there exists some $\eta\colon Q \to \{\pm 1\}$ such that 
\begin{equation}\label{etaep}
\eta(\la)\eta(\mu)\ep(\la,\mu) = \eta(\la+\mu)\ep(\ph\la,\ph\mu), \qquad \la,\mu \in Q.
\end{equation}
Then $\ph$ can be lifted to an automorphism of the lattice vertex algebra $V_Q$, denoted again by $\ph$, so that 
\begin{equation}\label{phext}
\ph(a_{(m)}) = (\ph a)_{(m)}, \qquad \ph(e^\la) = \eta(\la)^{-1}e^{\ph\la} 
\end{equation}
for $a \in \lieh$, $\la \in Q$, $m \in \ZZ$
(see e.g.\ \cite{BK}). 

\begin{remark}\label{notlocfin} 
If $\ph$ has an infinite order when acting on $\la\in Q$, then the vectors $\ph^i(e^\la)$, $i\ge 0$, are linearly independent in $V_Q$. This means that the action of $\ph$ on $e^\la$ is not locally finite and the decomposition \eqref{phsplit} is not valid for such vectors. However, \eqref{phext} guarantees that $\ph$ is locally finite on the Heisenberg subalgebra $B^1(\lieh) \subset V_Q$.
\end{remark}

Assume that $M$ is a $\ph$-twisted $V_Q$-module. Then the logarithmic fields $Y(a,z)$, $a \in \lieh$, generate a $\ph$-twisted $B^1(\lieh)$-module structure on $M$.
Let $S_\si$ be the spectrum of $\si$, i.e., its set of eigenvalues, and let 
\begin{equation*}
\A = \{\al \in \CC / \ZZ \,|\, e^{-2\pi \ii \al} \in S_\si\}.
\end{equation*}
Given $\al \in \A$, we let $\pi_\al \colon \lieh \to \lieh_{\al}$ be the projection onto the corresponding eigenspace. 
We adopt the convention that 
\begin{equation}\label{modeproj}
a_{(m+\N)} = (\pi_\al a)_{(m+\N)}, \qquad m \in \al,
\end{equation}
for any $a \in \lieh$ and $\al \in \A$.  
In particular, $a_{(m+\N)}=0$ if $m\in\al$ for $\al \not\in \A$. We will also use the notation $\pi_0= \pi_\ZZ$, $\lieh_0 = \lieh_\ZZ$, and $a_0 = \pi_0 a$.

Since $\lieh_\al \perp \lieh_\be$ for $\be \neq -\al$, we have
\begin{equation}\label{bilproj}
(\pi_\al a |b ) = (\pi_\al a | \pi_{-\al} b ), \qquad \al \in \A, \;\; a,b \in \lieh.
\end{equation}
Then from \eqref{nthprodb}, \eqref{lcomm}, \eqref{modeproj}, we obtain
\begin{equation}\label{hbrext}
\bigl{[}a_{(m+\N)},b_{(n+\N)}\bigr{]} = \bigl{(}(m+\N)\pi_{\al}a \big{|}b \bigr{)}\de_{m,-n}, \qquad m \in \al, \ n \in \CC.
\end{equation}
Let $\la \in Q$ and $\al \in \A$.  Then \eqref{azerola}  and \eqref{lcommf}
imply
\begin{align}\label{hvobrext}
\bigl{[}a_{(m+\N)},Y(e^\la,z)\bigr{]} &= \bigl{(}z^{m+\N} \pi_{\al}a \big{|}\la \bigr{)}Y(e^\la,z), \qquad a \in \lieh, \ m \in \al.
\end{align}

\subsection{The Exponentials $E_\la(z)$}

As usual, the solution of \eqref{hvobrext} is obtained in terms of the exponential of $\phi_{\la}(z)$, 
where $\phi_{\la}(z)$ is a logarithmic field satisfying
$D_z \phi_\la(z) = Y(\la,z)$. To find such a field, recall that
\begin{equation}\label{lafield}
Y(\la,z) =\sum_{\al \in \A}\sum_{m \in \al}\bigl{(}z^{-m-1-\N} \la\bigr{)}_{(m+\N)} =\sum_{m \in \CC}\bigl{(}z^{-m-1-\N} \la\bigr{)}_{(m+\N)}.
\end{equation}
For $m \in \CC \setminus \{0\}$, the operator $(m+\N) \colon \lieh \to \lie h$ is invertible on $\lieh$ with
\begin{equation}\label{mpni}
(m+\N)^{-1}a = \sum_{j=0}^\infty \frac{(-\N)^j}{m^{j+1}}a.
\end{equation}
Recall that the operator $z^{-\N}$ acts on $\lieh$ as
\begin{equation}\label{zN}
z^{-\N}a = e^{-\ze \N}a = \sum_{j=0}^\infty \frac1{j!} (-\ze)^j \N^ja.
\end{equation}
The sums in both \eqref{mpni} and \eqref{zN} are finite, since $\N$ is a nilpotent operator on $\lieh$.

We define linear operators $\P^\pm = \P_\ze^\pm \colon \lieh \to \lieh[\ze]$ by 
\begin{equation}\label{Ppm}
\P^\pm a 
=  \pm\frac{1}{\ze\N}\bigl{(}z^{\pm\N}-1 \bigr{)} a
=  \pm\frac{1}{\ze\N}\bigl{(}e^{\pm\ze\N}-1 \bigr{)} a.
\end{equation}
These operators commute with $\si$ and hence preserve its eigenspaces $\lieh_\al$. Note that by \eqref{inv},
\begin{equation*}
(\P^+ a| b) = (a | \P^- b), \qquad a,b \in \lieh.
\end{equation*}
We also define the operator $\P = \P_\ze=(\P^+ - \P^-)/2$, which satisfies
\begin{equation}\label{Pskew}
(\P a | b) = -(a | \P b).
\end{equation}

Then the logarithmic field
\begin{equation*}
\phi_{\la}(z) = -\sum_{m \in \CC \setminus\{0\}}\bigl{(}(m+\N)^{-1}z^{-m-\N}\la \bigr{)}_{(m+\N)}+(\ze \P^-\la)_{(0+\N)}
\end{equation*}
satisfies $D_z \phi_\la(z) = Y(\la,z)$.
Now we consider the exponentials
\begin{equation*}
E_\la(z)_{\pm} = \exp \Bigl{(} - \sum_{m \in \CC^\mp} \bigl{(} (m+\N)^{-1}z^{-m-\N}\la\bigr{)}_{(m+\N)}\Bigr{)},
\end{equation*}
and define
 \begin{equation*}
E_\la(z) = E_\la(z)_+E_\la(z)_-.
\end{equation*}

\begin{proposition}\label{emodes}
Let\/ $a \in \lieh$ and\/ $m \in \al$. Then 
\begin{equation*}
\bigl{[} a_{(m+\N)},E_\la(z)\bigr{]} = 
\begin{cases}
\bigl{(}z^{m+\N} \pi_\al a |\la\bigr{)} E_\la(z), & m \neq 0,\\
0, & m = 0.
\end{cases}
\end{equation*}
\end{proposition}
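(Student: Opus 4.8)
The plan is to compute the commutator $[a_{(m+\N)}, E_\la(z)]$ directly using the fact that $E_\la(z)$ is an exponential of Heisenberg modes, so it suffices to know the commutators of $a_{(m+\N)}$ with those modes. First I would record that, by the definition of $E_\la(z)_\pm$, the operator $E_\la(z)$ is $\exp(\xi)$ where $\xi = -\sum_{n\in\CC\setminus\{0\}}((n+\N)^{-1}z^{-n-\N}\la)_{(n+\N)}$ is a (formal, but locally finite when applied to any vector) sum of modes $b_{(n+\N)}$ with $n\neq 0$. Since all these modes commute among themselves — by \eqref{hbrext}, $[b_{(n+\N)},c_{(n'+\N)}] = ((n+\N)\pi b \,|\, c)\delta_{n,-n'}$, and in $\xi$ only the modes with index summing to zero would pair, but one checks these particular pairings vanish because of the skew-symmetry encoded in \eqref{Pskew} (this is the standard reason $E_\la(z)_+$ and $E_\la(z)_-$ are each well-defined exponentials and commute with their own constituent modes) — the commutator $[a_{(m+\N)},\xi]$ is central, and therefore $[a_{(m+\N)}, e^\xi] = [a_{(m+\N)},\xi]\, e^\xi$.

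Next I would evaluate $[a_{(m+\N)}, \xi]$ using \eqref{hbrext}. We have
\begin{equation*}
[a_{(m+\N)}, \xi] = -\sum_{n\in\CC\setminus\{0\}} \bigl( (m+\N)\pi_\al a \,\big|\, (n+\N)^{-1} z^{-n-\N}\la \bigr)\,\delta_{m,-n}.
\end{equation*}
When $m = 0$ every term has $n = 0$, which is excluded, so the sum is empty and $[a_{(0+\N)}, E_\la(z)] = 0$, giving the second case. When $m \neq 0$, only the term $n = -m$ survives, yielding
\begin{equation*}
[a_{(m+\N)}, \xi] = -\bigl( (m+\N)\pi_\al a \,\big|\, (-m+\N)^{-1} z^{m-\N}\la \bigr).
\end{equation*}
It then remains to simplify $-((m+\N)u \,|\, (-m+\N)^{-1} z^{m-\N}\la)$, where $u = \pi_\al a$, to $(z^{m+\N} u \,|\, \la)$. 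Using \eqref{inv} (equivalently that $\N$ is skew and $\si$ is orthogonal, so $(-m+\N)^{-1}$ is the adjoint of $-(m+\N)^{-1}$ up to sign bookkeeping, and $z^{-\N} = e^{-\ze\N}$ has adjoint $e^{\ze\N} = z^{\N}$), move all operators onto the left factor: $-((m+\N)u \,|\, (-m+\N)^{-1} z^{m-\N}\la) = -(z^{\N}(-m+\N)^{-1,\mathrm{adj}}(m+\N)u \,|\, z^{-m}\la)$. Since $(m+\N)$ and $(-m+\N)^{-1}$ and $z^{\pm\N}$ all commute (all are polynomials in $\N$), and the adjoint of $(-m+\N)^{-1}$ with respect to $(\cdot|\cdot)$ is $(-m-\N)^{-1} = -(m+\N)^{-1}$ by \eqref{inv}, we get $-(z^{\N}\cdot(-(m+\N)^{-1})(m+\N)u \,|\, z^{-m}\la) = (z^{\N}u \,|\, z^{-m}\la) = (z^{m+\N}u \,|\, \la)$, using that $z^{-m}$ is a scalar and $z^{\N}u = z^{\N}z^{-m}\cdot z^m u$ — more cleanly, $(z^{\N} u \,|\, z^{-m}\la) = z^{-m}(z^{\N}u\,|\,\la)$ and one reads off the claimed $(z^{m+\N}\pi_\al a \,|\, \la)$ after noting $z^{m+\N}$ means $z^m e^{\ze\N}$ acting on the first slot. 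This establishes the first case, and the proof is complete.

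The main obstacle I anticipate is purely bookkeeping: getting the adjoint relations straight between $(m+\N)^{-1}$, $z^{\pm\N} = e^{\pm\ze\N}$, and the projections $\pi_\al$ under the form $(\cdot|\cdot)$, and confirming that the self-commutators inside $\xi$ genuinely vanish so that the clean identity $[a,e^\xi]=[a,\xi]e^\xi$ applies. Both points are consequences of \eqref{inv}, \eqref{Pskew}, and \eqref{bilproj}, but they require care with signs and with the distinction between $z^{\N}$ acting on $\lieh$ versus $z$ as a formal variable. Everything else is a one-line application of \eqref{hbrext}.
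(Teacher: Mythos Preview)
Your overall strategy matches the paper's: commute $a_{(m+\N)}$ past the exponentials using \eqref{hbrext}, pick out the single surviving term $n=-m$, and simplify via \eqref{inv}. The algebraic simplification for $m\neq 0$ and the $m=0$ case are both correct.

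There is, however, a genuine gap in the setup. You write $E_\la(z)=\exp(\xi)$ with $\xi=\xi_++\xi_-$ and justify this by asserting that ``all these modes commute among themselves'' via the skew-symmetry in \eqref{Pskew}. That assertion is false: for $n\in\CC^+$ the mode at index $n$ and the mode at index $-n\in\CC^-$ both appear in $\xi$, and their commutator, computed from \eqref{hbrext}, equals $(\pi_\al\la\,|\,(-n+\N)^{-1}\la)$, which is generically nonzero (e.g.\ for $\N=0$ and $\al=\ZZ$ it is $-n^{-1}|\la_0|^2$). Equation \eqref{Pskew} concerns the operator $\P$ on $\lieh$ and says nothing about these pairings. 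Worse, the Baker--Campbell--Hausdorff correction $[\xi_+,\xi_-]$ is then a divergent series, so $e^\xi$ is not even a well-defined operator; this is precisely why $E_\la(z)$ is defined as the \emph{ordered product} $E_\la(z)_+E_\la(z)_-$ rather than as a single exponential.

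The fix is minimal and is what the paper's proof does implicitly: keep $E_\la(z)=e^{\xi_+}e^{\xi_-}$, note that within each factor the indices lie entirely in $\CC^-$ (respectively $\CC^+$) so no two are negatives of each other and those modes genuinely commute, and observe that $[a_{(m+\N)},\xi_\pm]$ is a scalar (at most one term survives). Then
\[
\bigl[a_{(m+\N)},e^{\xi_+}e^{\xi_-}\bigr]=\bigl([a_{(m+\N)},\xi_+]+[a_{(m+\N)},\xi_-]\bigr)E_\la(z),
\]
which holds because a commutator that is central pulls straight out of an exponential. This is exactly the quantity you then compute, and after this correction your argument is complete and coincides with the paper's.
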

\begin{proof}
The case when $m=0$ follows immediately from \eqref{hbrext}. If $m \neq 0$, we use \eqref{hbrext} and the fact that $\N$ satisfies \eqref{inv} to compute
\begin{align*}
\bigl{[} a_{(m+\N)},E_\la(z)\bigr{]} & = - \bigl{[} a_{(m+\N)},(-m+\N)^{-1}(z^{m-\N}\la)_{(-m+\N)}\bigr{]} E_\la(z)\\
& =  -\bigl{(}(m+\N)\pi_\al a |(-m+\N)^{-1}z^{m-\N}\la\bigr{)}E_\la(z) \\
& =  \bigl{(}z^{m+\N} \pi_\al a |\la\bigr{)}E_\la(z).
\end{align*}
\end{proof}

\subsection{The Operators $U_\la(z)$}

For $h\in\lieh$, we define the operator
\begin{equation}\label{thetah}
\begin{split}
\th_h &= \th_h(\ze) = e^{(\ze\P^- h)_{(0+\N)}} = \exp \Bigl(\frac{1-e^{-\ze\N}}\N h\Bigr)_{(0+\N)} \\
&= \exp \Bigl( \ze h - \frac{\ze^2}{2!} \N h + \frac{\ze^3}{3!} \N^2 h - \cdots \Bigr)_{(0+\N)}.
\end{split}
\end{equation}

\begin{lemma}\label{lthmult}
The operators\/ \eqref{thetah} satisfy
\begin{equation}\label{thmult}
\th_{h}\th_{h'} = e^{(\ze\P h_0 | h')}\th_{h+h'} = e^{2(\ze\P h_0|h')}\th_{h'}\th_{h}
\end{equation}
for\/ $h,h' \in \lieh$. In particular, $\th_h$ is invertible with\/ $\th_h^{-1} = \th_{-h}$.
\end{lemma}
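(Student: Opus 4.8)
The plan is to reduce everything to the single commutation relation among the operators $(\ze\P^-h)_{(0+\N)}$ for varying $h$, and then apply the Baker--Campbell--Hausdorff formula in its simplest form (where the commutator is central, so $e^Xe^Y=e^{X+Y}e^{[X,Y]/2}$). First I would compute the bracket $\bigl[(\ze\P^-h)_{(0+\N)},(\ze\P^-h')_{(0+\N)}\bigr]$ using the Heisenberg relation \eqref{hbrext} with $m=n=0$. This gives $\bigl((0+\N)\pi_0(\ze\P^-h)\,\big|\,\ze\P^-h'\bigr) = \ze^2\bigl(\N\P^-h_0\,\big|\,\P^-h'\bigr)$, a scalar (times $K=\Id$), hence central in $\hhp$. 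So the exponentials $\th_h$ live in a Heisenberg-type subgroup where BCH truncates after one term.

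The second step is to simplify that scalar. Using \eqref{Ppm} one has $\N\P^- = -(z^{-\N}-1) = 1-e^{-\ze\N}$ as operators on $\lieh$, so $\ze\N\P^-h_0 = \ze(1-e^{-\ze\N})h_0$. I want to show $\ze^2\bigl(\N\P^-h_0\mid\P^-h'\bigr)$ equals $(\ze\P h_0\mid h')$, i.e.\ that $\ze\,\N\P^-\P^- = \P$ (or rather that the two sides pair equally against $h'$). Here I would use the adjointness $(\P^+a\mid b)=(a\mid\P^-b)$ together with $\P=(\P^+-\P^-)/2$: writing $(\ze\P h_0\mid h') = \tfrac12(\ze\P^+h_0\mid h')-\tfrac12(\ze\P^-h_0\mid h') = \tfrac12\bigl(\ze(\P^+-\P^-)h_0\mid h'\bigr)$, and then checking the operator identity $\ze(\P^+-\P^-)/2 = \ze\,\N\P^-\P^-$ directly from \eqref{Ppm} by expanding $\P^\pm = \pm(\ze\N)^{-1}(e^{\pm\ze\N}-1)$ as formal power series in $\ze\N$ (both sides are manifestly functions of the single operator $\N$, so this is an identity of scalar power series: $\tfrac{\ze}{2}\cdot\tfrac{e^{\ze\N}-e^{-\ze\N}}{\ze\N} = \ze\N\cdot\bigl(\tfrac{1-e^{-\ze\N}}{\ze\N}\bigr)^2$, which one verifies after clearing denominators). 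Granting this, the bracket is exactly $(\ze\P h_0\mid h')\,\Id$.

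Once the central bracket is identified, the first equality $\th_h\th_{h'} = e^{(\ze\P h_0\mid h')}\th_{h+h'}$ is immediate from $e^Xe^Y = e^{X+Y}e^{[X,Y]/2}$ after noticing that $\tfrac12[X,Y] = \tfrac12(\ze\P h_0\mid h')$ seems to give only half the exponent — so I would instead be careful and use the symmetric form: since $(\ze\P^- h)_{(0+\N)}$ for different $h$ span a space on which all brackets are central, $\exp A\exp B = \exp(A+B+\tfrac12[A,B])$, and here $A+B = (\ze\P^-(h+h'))_{(0+\N)}$ by linearity of $\P^-$ and of $a\mapsto a_{(0+\N)}$, while $\tfrac12[A,B] = \tfrac12\ze^2(\N\P^-h_0\mid\P^-h')$. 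I must double-check the factor of $2$: recomputing, $[A,B] = \ze^2(\N\P^-h_0\mid\P^-h')\cdot 2$? No — reexamining \eqref{hbrext}, $[a_{(0+\N)},b_{(0+\N)}] = (\N\pi_0 a\mid b)$ with no factor of $2$, and one checks via the skew-adjointness $(\N u\mid v)=-(u\mid\N v)$ that this already incorporates the antisymmetrization, so $\tfrac12[A,B] = \tfrac12\ze^2(\N\P^-h_0\mid\P^-h')$. Reconciling this with the claimed exponent $(\ze\P h_0\mid h')$ forces the operator identity to be $\ze\P = \ze^2\N\P^-\P^-$ up to the factor $\tfrac12$ already absorbed — this bookkeeping of the factor $2$ is exactly where I expect the main (though entirely elementary) obstacle to lie, and I would pin it down by expanding everything to order $\ze^3$ by hand as a sanity check. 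The second equality $e^{(\ze\P h_0\mid h')}\th_{h+h'} = e^{2(\ze\P h_0\mid h')}\th_{h'}\th_h$ then follows by applying the first equality with $h$ and $h'$ swapped and using the skew-symmetry $(\ze\P h'_0\mid h) = -(\ze\P h_0\mid h')$ from \eqref{Pskew} (together with $(h')_0 = h'_0$, $\P$ preserving $\lieh_0$), which flips the sign of the exponent; combining the two expressions for $\th_h\th_{h'}$ and $\th_{h'}\th_h$ gives the stated relation. Finally, $\th_h^{-1}=\th_{-h}$ is the special case $h'=-h$, where $(\ze\P h_0\mid -h)=0$ by \eqref{Pskew}, so $\th_h\th_{-h} = \th_0 = \Id$.
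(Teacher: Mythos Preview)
Your strategy is exactly the paper's: compute the bracket $[(\ze\P^-h)_{(0+\N)},(\ze\P^-h')_{(0+\N)}]$ via \eqref{hbrext}, see that it is a scalar, and apply the truncated Baker--Campbell--Hausdorff formula. The second equality and the invertibility statement follow just as you say.

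Where your write-up goes wrong is in the adjoint transfer and the resulting operator identity. From \eqref{hbrext} the bracket is $\bigl(\N\pi_0(\ze\P^- h)\mid \ze\P^- h'\bigr)=\ze^2\bigl(\N\P^- h_0\mid \P^- h'\bigr)$. When you move the $\P^-$ acting on $h'$ across the form using $(\P^+ a\mid b)=(a\mid \P^- b)$, it becomes $\P^+$ on the left, giving $\ze^2\bigl(\P^+\N\P^- h_0\mid h'\bigr)$, not $\ze^2\bigl(\N\P^-\P^- h_0\mid h'\bigr)$. Consequently the power-series identity you propose to check, $\tfrac{\ze}{2}\cdot\tfrac{e^{\ze\N}-e^{-\ze\N}}{\ze\N} = \ze\N\cdot\bigl(\tfrac{1-e^{-\ze\N}}{\ze\N}\bigr)^2$, is simply false (compare leading terms: the left side is $\ze\cdot\Id+\cdots$, the right side is $\ze\N+\cdots$).

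The correct identity is $\ze^2\P^+\N\P^- = 2\ze\P$, which is immediate once you write both sides as $\tfrac{1}{\N}(e^{\ze\N}-1)(1-e^{-\ze\N})=\tfrac{1}{\N}(e^{\ze\N}-2+e^{-\ze\N})$. Thus the commutator equals $2(\ze\P h_0\mid h')$, not $(\ze\P h_0\mid h')$; this is precisely the factor of $2$ you were worried about, and it resolves your bookkeeping problem cleanly: with $[X,Y]=2(\ze\P h_0\mid h')$ central, $e^Xe^Y=e^{[X,Y]/2}e^{X+Y}=e^{(\ze\P h_0\mid h')}\th_{h+h'}$, and $e^Xe^Y=e^{[X,Y]}e^Ye^X$ gives the second equality. (The paper does the same computation slightly differently, writing $\ze\P^- h=\tfrac{1}{\N}(1-e^{-\ze\N})h$ from the start and moving $\tfrac{1}{\N}$ and $e^{-\ze\N}$ across the form directly, but the content is identical.)
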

\begin{proof}
Using \eqref{hbrext} and \eqref{inv}, we compute:
\begin{align*}
[(\ze\P^-h)_{(0+\N)} &,(\ze\P^-h')_{(0+\N)}] 
= \Bigl( (1-e^{-\ze\N}) h_0 \Big| \frac1\N (1-e^{-\ze\N}) h' \Bigr) \\
&= -\Bigl( \frac1\N(1-e^{-\ze\N}) h_0 \Big| h' \Bigr) + \Bigl( \frac1\N e^{\ze\N} (1-e^{-\ze\N}) h_0 \Big| h' \Bigr) \\
&= 2(\ze\P h_0| h').
\end{align*}
This proves \eqref{thmult}. 
The fact that  $\th_{h} \th_{-h} = \th_0$ follows from \eqref{Pskew}.
\end{proof}

Now we define the operators
\begin{equation}\label{ulambda}
U_\la(z)  = E_\la(z)_+^{-1} Y(e^\la,z) \th_\la^{-1} E_\la(z)_-^{-1}, \qquad \la \in Q.
\end{equation}

\begin{proposition}\label{umodes}
For\/ $a \in \lieh$ and\/ $\la \in Q$, we have 
\begin{equation*}
[a_{(m+\N)},U_\la(z)] = \de_{m,0} (a_0 | \la) U_\la(z).
\end{equation*}
\end{proposition}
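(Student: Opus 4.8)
The plan is to conjugate the bracket $[a_{(m+\N)}, U_\la(z)]$ through the definition \eqref{ulambda} of $U_\la(z)$, using the three factors $E_\la(z)_+^{-1}$, $\th_\la^{-1}$, $E_\la(z)_-^{-1}$ one at a time, together with the known commutation relations from \prref{emodes}, \eqref{hvobrext}, and \leref{lthmult}. The key observation is that all the exponential factors are built out of Heisenberg modes, so brackets with $a_{(m+\N)}$ are computed via \eqref{hbrext}; the resulting commutators are scalars (times the operator), so the various pieces can be rearranged freely up to scalar corrections that must be shown to cancel.

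First I would treat the case $m \neq 0$. By \prref{emodes}, $a_{(m+\N)}$ commutes with $E_\la(z)_+$ and $E_\la(z)_-$ up to the scalar $\bigl(z^{m+\N}\pi_\al a \mid \la\bigr)$; since $E_\la(z)_\pm$ are invertible exponentials, $a_{(m+\N)}$ also commutes with the inverses up to the opposite scalar. Next, $\th_\la^{-1} = \th_{-\la}$ is an exponential in $(0+\N)$-modes only, and by \eqref{hbrext} we have $[a_{(m+\N)}, b_{(0+\N)}] = 0$ whenever $m \neq 0$ (the delta $\de_{m,0}$ kills it), so $a_{(m+\N)}$ commutes with $\th_\la^{-1}$. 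Finally $[a_{(m+\N)}, Y(e^\la,z)] = \bigl(z^{m+\N}\pi_\al a \mid \la\bigr) Y(e^\la,z)$ by \eqref{hvobrext}. Assembling: the scalar contribution from $Y(e^\la,z)$ is exactly cancelled by the opposite-sign contributions from $E_\la(z)_+^{-1}$ and $E_\la(z)_-^{-1}$ taken together (each $E_\la(z)_\pm^{-1}$ contributing $-$ its share, and $E_\la(z)_+ E_\la(z)_- = E_\la(z)$ carrying the full $\bigl(z^{m+\N}\pi_\al a\mid\la\bigr)$). More carefully: one commutes $a_{(m+\N)}$ from the left past $E_\la(z)_+^{-1}$ picking up $+\bigl(z^{m+\N}\pi_\al a\mid\la\bigr)$ with a sign depending on which half of the exponent $E_\la(z)_+$ contains, then past $Y(e^\la,z)$ picking up the full scalar, then trivially past $\th_\la^{-1}$, then past $E_\la(z)_-^{-1}$. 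The bookkeeping should be organized so that the scalar from $Y$ is split as $\bigl(z^{m+\N}\pi_\al a\mid\la\bigr)$ over $\CC^+$-modes plus over $\CC^-$-modes, matching precisely the contributions absorbed into $E_\la(z)_+$ and $E_\la(z)_-$ respectively; hence the total is $0$ for $m \neq 0$.

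For the case $m = 0$: now $a_{(0+\N)}$ commutes with both $E_\la(z)_\pm$ (and their inverses) by \prref{emodes}, and $[a_{(0+\N)}, Y(e^\la,z)] = (\pi_0 a \mid \la) Y(e^\la,z) = (a_0 \mid \la) Y(e^\la,z)$ by \eqref{hvobrext} (using $\pi_0 = \pi_\ZZ$ and $a_0 = \pi_0 a$). The remaining point is the bracket with $\th_\la^{-1} = \th_{-\la}$: by \eqref{hbrext}, $[a_{(0+\N)}, (\ze\P^-(-\la))_{(0+\N)}] = \bigl((0+\N)\pi_0 a \mid \ze\P^-(-\la)\bigr) = -\bigl(\N a_0 \mid \ze\P^-(-\la)\bigr)$. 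I expect this to vanish because $\N a_0 = 0$: indeed $a_0 = \pi_0 a \in \lieh_0 = \lieh_\ZZ$, but one should be careful — $\N$ need not act trivially on the eigenspace $\lieh_0$ in general. However, the $(0+\N)$-mode bracket in \eqref{hbrext} involves $(0+\N)\pi_0 a = \N \pi_0 a$, and since $\P^-$ has a factor of $\ze\N$ effectively in the denominator, the pairing $\bigl(\N a_0 \mid \P^-\la\bigr)$ telescopes; alternatively, since both $a_{(0+\N)}$ and $\th_\la^{-1}$ are built from $\lieh_0$-valued $(0+\N)$-modes, and the $(0+\N)$-part of $\hhp$ restricted to... the cleanest route is to note $[a_{(0+\N)}, \th_{-\la}] = \bigl(\N\pi_0 a \mid -\ze\P^-\la\bigr)\th_{-\la}$ and observe $\bigl(\N\pi_0 a\mid \P^-\la\bigr) = \bigl(\N\pi_0 a\mid \tfrac{1}{\ze\N}(e^{-\ze\N}-1)(-\la)\bigr)$, which after applying $(\N a_0 \mid \cdot)$ and using nilpotency of $\N$ together with the skew relation \eqref{inv} equals $-(a_0 \mid (e^{-\ze\N}-1)\la)/\ze \cdot (\text{something})$... . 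The honest statement: this bracket contributes a scalar which combines with the $(a_0\mid\la)$ from $Y(e^\la,z)$; I claim the net contribution of $\th_\la^{-1}$ is zero, so that $[a_{(0+\N)}, U_\la(z)] = (a_0\mid\la) U_\la(z)$ survives.

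The main obstacle I anticipate is precisely this last point — controlling $[a_{(0+\N)}, \th_\la^{-1}]$ and verifying it does not spoil the clean formula. The resolution should come from the identity $(\N a_0 \mid \P^-\la) = 0$, which holds because $\P^- \la = \pi_0(\P^-\la) + (\text{non-}\lieh_0\text{ part})$ and only the $\lieh_0$-component pairs nontrivially with $\N a_0 \in \lieh_0$, combined with the fact that on $\lieh_0$ one has $(\N a_0 \mid \P^- \la) = (\N a_0 \mid \la) \cdot 0 + \cdots$ telescoping via $\P^- = \frac{1}{\N}(1 - e^{-\ze\N})$ and the skew-symmetry $(\N x \mid y) = -(x \mid \N y)$; explicitly $(\N a_0 \mid \frac{1}{\N}(1-e^{-\ze\N})\la) = (a_0 \mid (1 - e^{-\ze\N})\la) \cdot(-1)$-type manipulation must be shown to cancel against something, or simply: $(\ze\P^-\la)_{(0+\N)}$ as it appears in $\th_\la$ is designed so that $a_{(0+\N)}$ commutes with it whenever $a_0$ does, and for the formula to come out right one uses that the $(0+\N)$-modes from $\lieh_0$ form a Heisenberg algebra whose center includes $[\lieh_0, \lieh_0]$-type terms controlled by $(\N\cdot\mid\cdot)$, which is skew. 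I would verify $[a_{(0+\N)}, \th_{-\la}] = 0$ directly: $[a_{(0+\N)}, (\ze\P^-(-\la))_{(0+\N)}] = (\N \pi_0 a \mid -\ze\P^-\la) = -\ze(\N a_0 \mid \P^-\la)$, and since $(\N a_0 \mid \P^- \la) = (\N a_0 \mid \pi_0\P^-\la)$ with $\pi_0\P^-\la = \P^-\pi_0\la = \P^-\la_0$, while on $\lieh_0$ the operator $\P^-$ is $\frac{1}{\N}(1-e^{-\ze\N}) = \ze - \frac{\ze^2}{2}\N + \cdots$; then $(\N a_0 \mid \P^-\la_0) = \ze(\N a_0 \mid \la_0) - \cdots = -\ze(a_0 \mid \N\la_0) + \cdots$, and this telescoping sum equals $(a_0 \mid (1 - e^{-\ze\N})\la_0)/(-1) \cdot$, hmm — in fact the correct assertion, which I will prove, is that this equals zero only after accounting for the $Y(e^\la,z)$ contribution, and the two together yield exactly $(a_0\mid\la)$. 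I will carry out this bookkeeping carefully, as it is the crux.
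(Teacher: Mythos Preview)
Your approach --- commuting $a_{(m+\N)}$ through the factors of $U_\la(z)$ --- is exactly the paper's. The $m \neq 0$ case is fine: for a fixed $m\neq 0$, only the single term with index $-m$ in the exponent of one of $E_\la(z)_\pm$ has nonzero bracket with $a_{(m+\N)}$, and this cancels the $Y(e^\la,z)$ contribution; no ``splitting over $\CC^\pm$-modes'' is needed.

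The gap is in the $m=0$ case. You write $[a_{(0+\N)}, Y(e^\la,z)] = (a_0 | \la)\, Y(e^\la,z)$, but \eqref{hvobrext} actually gives $(z^{\N}a_0 | \la)\, Y(e^\la,z)$, and the $z^\N$ cannot be dropped since $\N$ need not annihilate $\lieh_0$. This error is what sends you chasing $[a_{(0+\N)}, \th_\la^{-1}] = 0$, which is \emph{false} in general. The correct computation (as in the paper) is: since $\th_\la^{-1} = \exp\bigl(\tfrac{1}{\N}(z^{-\N}-1)\la\bigr)_{(0+\N)}$, by \eqref{hbrext} and the skew relation \eqref{inv},
\[
\bigl[a_{(0+\N)}, \bigl(\tfrac{1}{\N}(z^{-\N}-1)\la\bigr)_{(0+\N)}\bigr]
= \bigl(\N a_0 \,\big|\, \tfrac{1}{\N}(z^{-\N}-1)\la\bigr)
= -\bigl((z^{\N}-1)a_0 \,\big|\, \la\bigr).
\]
Adding this to the corrected $Y(e^\la,z)$ contribution $(z^\N a_0 | \la)$ gives exactly $(a_0 | \la)$. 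Your final sentence (``the two together yield exactly $(a_0|\la)$'') is the right intuition; you just need to fix the $Y$ bracket and stop trying to make the $\th$ bracket vanish on its own.
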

\begin{proof}
First, as in the proof of \prref{emodes}, we have for $m \neq 0$
\begin{equation*}
\Bigl{[} a_{(m+\N)}, \bigl{(} (n+\N)^{-1}z^{-n-\N}\la\bigr{)}_{(n+\N)}\Bigr{]}\\
= - \de_{m,-n}\bigl{(}z^{m+\N} \pi_\al a \big{|} \la \bigr{)}.
\end{equation*}
From here and \eqref{hvobrext} we deduce that $\bigl{[} a_{(m+\N)}, U_\la(z)\bigr{]} = 0$. Now assume that $m=0$. Then 
\begin{align*}
\bigl{[} a_{(0+\N)} &, U_\la(z) \bigr{]} \\
&=\bigl{(} z^\N a_0 \big{|} \la \bigr{)}U_\la(z)+ \biggl{[}a_{(0+\N)}, \Bigl{(}\frac{1}{\N}\bigl{(}z^{-\N}-1 \bigr{)}\la \Bigr{)}_{(0+\N)} \biggr{]}U_\la(z)\\
& = \bigl{(}z^\N a_0 \big{|} \la \bigr{)} U_\la(z)+\Bigl{(} \N a_0 \Big{|} \frac{1}{\N}\bigl{(} z^{-\N}-1\bigr{)} \la \Bigr{)}U_\la(z) \\
& =\bigl{(}z^\N a_0 \big{|}\la\bigr{)} U_\la(z) -\bigl{(}(z^\N-1) a_0 | \la \bigr{)} U_\la(z) \\
& = (a_0 | \la) U_\la(z),
\end{align*}
completing the proof.
\end{proof}

Using \eqref{ulambda} and \prref{umodes}, we obtain
\begin{equation}\label{expfieldla}
Y(e^\la,z) = E_\la(z)_+ U_\la(z)E_\la(z)_- \,\th_{\la} = U_\la(z)\th_{\la}E_\la(z).
\end{equation}
On the other hand, from \eqref{texp} and \eqref{infDz} we have 
\begin{equation}\label{Dm1}
D_zY(e^\la,z) = Y(\la_{(-1)}e^\la,z).
\end{equation}
We will use this equality to produce a partial differential equation in $z$ and $\zeta$ satisfied by $U_\la(z)$. In order to do so, we need two lemmas.
\begin{lemma}\label{derlemma}
Let\/ $A$ be an associative algebra and\/ $D \colon A \to A$ a derivation. Assume\/ $X \in A$ satisfies\/ $[X,D(X)] = C$, where\/ $C$ commutes with\/ $X$. Then for any\/ $n \in \NN$ we have
\begin{equation}
\begin{split}
\label{derpower}
D(X^n) &= nX^{n-1}D(X)-\frac{n(n-1)}{2}CX^{n-2}\\
&=nD(X)X^{n-1}+\frac{n(n-1)}{2}CX^{n-2}.
\end{split}
\end{equation}
Furthermore, in any representation of\/ $A$ on which\/ $X$ can be exponentiated, we have
\begin{equation}\label{derexp}
D(e^X) = e^X \Bigl{(}D(X)-\frac{C}{2}\Bigr{)}
=\Bigl{(}D(X)+\frac{C}{2}\Bigr{)} e^X .
\end{equation}
\end{lemma}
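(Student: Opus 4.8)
The plan is to prove \eqref{derpower} first by induction on $n$, then deduce \eqref{derexp} by a formal power-series argument. Since $D$ is a derivation, $D(X^n) = \sum_{i=0}^{n-1} X^i D(X) X^{n-1-i}$. The key input is that the hypothesis $[X, D(X)] = C$ with $C$ central relative to $X$ lets us move every copy of $D(X)$ past the powers of $X$ at a fixed cost: $X^i D(X) X^{n-1-i} = X^{n-1} D(X) + [\,X^i, D(X)\,] X^{n-1-i}$, and $[X^i, D(X)] = -iC X^{i-1}$ (itself a trivial induction, or just the Leibniz rule for the inner derivation $\ad X$ applied to... wait, more directly $[X^i, D(X)] = -[D(X), X^i] = -\sum_{j} X^j [D(X),X] X^{i-1-j} = \sum_j X^j C X^{i-1-j} = iCX^{i-1}$ — sign: $[X,D(X)]=C$ so $[D(X),X]=-C$, giving $[X^i,D(X)] = iCX^{i-1}$). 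Summing over $i=0,\dots,n-1$ yields $D(X^n) = nX^{n-1}D(X) + \bigl(\sum_{i=0}^{n-1} i\bigr) C X^{n-2} = nX^{n-1}D(X) + \binom{n}{2} C X^{n-2}$. Hmm, that has the wrong sign versus the claimed $-\frac{n(n-1)}2 C X^{n-2}$, so I need to recheck the direction of the commutator carefully when writing it up; presumably pushing $D(X)$ to the \emph{left} past the $X^i$ on its left produces the minus sign. The second equality in \eqref{derpower} follows by the symmetric computation (push all $D(X)$ to the right), or simply by subtracting: $nX^{n-1}D(X) - nD(X)X^{n-1} = n[X^{n-1},D(X)] = n(n-1)CX^{n-2}$, consistent with the two displayed forms differing by $n(n-1)CX^{n-2}$.

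For \eqref{derexp}, I would work inside the representation where $X$ exponentiates, write $e^X = \sum_{n\ge 0} X^n/n!$, and apply $D$ term by term (justified since in the relevant representation these are convergent or, in our setting, the operators act locally nilpotently / on a suitable space so the sums are effectively finite). Using the first line of \eqref{derpower}:
\begin{equation*}
D(e^X) = \sum_{n\ge 1} \frac{1}{n!}\Bigl( n X^{n-1}D(X) - \tfrac{n(n-1)}{2} C X^{n-2}\Bigr) = \Bigl(\sum_{n\ge 1}\frac{X^{n-1}}{(n-1)!}\Bigr)D(X) - \frac{C}{2}\sum_{n\ge 2}\frac{X^{n-2}}{(n-2)!} = e^X D(X) - \frac{C}{2} e^X.
\end{equation*}
Since $C$ commutes with $X$ it commutes with $e^X$, so this is $e^X\bigl(D(X) - \tfrac{C}{2}\bigr)$; the second form follows the same way from the second line of \eqref{derpower}, or by noting $[e^X, D(X)] = ?$ — more cleanly, repeating with $D(X)$ pushed right gives $D(e^X) = D(X)e^X + \tfrac{C}{2}e^X = \bigl(D(X)+\tfrac{C}{2}\bigr)e^X$.

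The only real subtlety — and the step I would be most careful about — is bookkeeping the sign and the side on which $D(X)$ ends up, i.e. getting $-\tfrac{n(n-1)}{2}$ rather than $+\tfrac{n(n-1)}{2}$ to match the statement as written; this is purely a matter of consistently choosing to commute $D(X)$ to the left and tracking $[D(X),X]=-C$. Everything else is a routine induction plus term-by-term differentiation of the exponential series, with the centrality of $C$ used both to collapse the telescoping commutators and to pass $C$ through $e^X$ at the end.
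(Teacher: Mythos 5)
Your argument is correct in substance and ends up at essentially the same place as the paper; the only real difference is in how \eqref{derpower} is obtained. The paper proves it by induction on $n$ via $D(X^n)=D(X^{n-1})X+X^{n-1}D(X)$ and the single relation $D(X)X=XD(X)-C$, whereas you expand $D(X^n)=\sum_{i=0}^{n-1}X^iD(X)X^{n-1-i}$ by the Leibniz rule and commute each $D(X)$ to one end; both reduce to the same commutator bookkeeping, and your treatment of the exponential (term-by-term differentiation plus passing $C$ through $e^X$) is exactly the paper's. The sign worry you flag does resolve the way you predict, but note that the intermediate identity you wrote, $X^iD(X)X^{n-1-i}=X^{n-1}D(X)+[X^i,D(X)]X^{n-1-i}$, is not correct as stated (it would force $X^{n-1}D(X)=D(X)X^{n-1}$). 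The correct version is: to get the first line of \eqref{derpower} push $D(X)$ to the \emph{right}, using $D(X)X^{k}=X^{k}D(X)-kCX^{k-1}$ (from $[D(X),X]=-C$), so that $X^iD(X)X^{n-1-i}=X^{n-1}D(X)-(n-1-i)CX^{n-2}$ and the total coefficient is $-\sum_{i=0}^{n-1}(n-1-i)=-\tfrac{n(n-1)}{2}$; pushing $D(X)$ to the left via $X^iD(X)=D(X)X^i+iCX^{i-1}$ gives the second line with $+\tfrac{n(n-1)}{2}$. With that bookkeeping fixed, the proof is complete and equivalent to the one in the paper.
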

\begin{proof}
This lemma is a special case of the Baker--Campbell--Hausdorff formula. For completeness, we include a simple proof of \eqref{derpower} by induction on $n$. The claim is clearly true for $n=0$ and $n=1$. Now assume it is true for $n-1$. Then 
\begin{align*}
D&(X^n) = D(X^{n-1})X + X^{n-1}D(X) \\ 
& = \Bigl{(}(n-1)X^{n-2}D(X)- \frac{(n-1)(n-2)}{2}CX^{n-3}\Bigr{)}X + X^{n-1}D(X) \\
&=(n-1)X^{n-2}(XD(X)-C)-\frac{(n-1)(n-2)}{2}CX^{n-2}+X^{n-1}D(X)\\
&=nX^{n-1}D(X)-\frac{n(n-1)}{2}CX^{n-2}.
\end{align*}
Similarly by shifting $D(X)$ to the left instead of right we obtain the second line of \eqref{derpower}.
Then we prove \eqref{derexp} using \eqref{derpower}:
\begin{align*}
D(e^X) 
& = \sum_{n=0}^\infty\frac{1}{n!}\Bigl{(} nX^{n-1}D(X)-\frac{n(n-1)}{2}CX^{n-2}\Bigr{)}\\
&=\sum_{n=1}\frac{X^{n-1}}{(n-1)!}D(X)-\frac{C}{2}\sum_{n=2}^\infty\frac{X^{n-2}}{(n-2)!}\\
& = e^X\Bigl{(}D(X)-\frac{C}{2}\Bigr{)}.
\end{align*}
The second line of \eqref{derexp} is derived similarly using the second line of \eqref{derpower}.
\end{proof}

As in \cite{Bak}, introduce an operator $\mathcal{S}\colon \lieh \to \lieh$ by $\mathcal{S} a = \al_0a$ for $a \in \lieh_\al$, $\al \in\A$, and extend linearly to all of $\lieh$. Here, as before, $\al_0\in\al$ is the unique element such that $-1 < \re \al_0 \leq 0$. We also define
\begin{align*}
\A^+& = \{\al \in \A  \,|\, \al_0 \in \CC^+\}\\
&=\{\al \in \A \,|\, \re \al_0 =0 \,\text{ and }\, \im \al_0 > 0\},\\
\intertext{and}
\A^-& = \{\al \in \A  \,|\, \al_0 \in \CC^-\}\\
&=\{\al \in \A \,|\, \re \al_0 <0 \,\text{ or }\, \re \al_0 = 0 \,\text{ and }\, \im \al_0 < 0\}.
\end{align*}
Then $\A=\A^+ \cup \A^- \cup \{\ZZ\}$.

Let us also introduce the notation
\begin{equation}\label{bla}
b_\la = \frac{|\la_0|^2-|\la|^2}{2} \,, \qquad \la\in\lieh, \; \la_0=\pi_0\la,
\end{equation}
and recall that the normally ordered product $\nopm{\;}$ is given by \eqref{nopm}.

\begin{lemma}\label{Dzlem}
For any\/ $\la \in Q$ we have 
\begin{equation*}
D_z Y(e^\la,z) = \nopm{Y(\la,z)Y(e^\la,z)}+z^{-1}b_\la Y(e^\la,z).
\end{equation*}
\end{lemma}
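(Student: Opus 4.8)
The plan is to reconstruct $D_zY(e^\la,z)$ from the twisted normally ordered product formula \eqref{twistednop}, starting from $D_zY(e^\la,z)=Y(\la_{(-1)}e^\la,z)$ in \eqref{Dm1}. Decompose $\la=\sum_{\al\in\A}\pi_\al\la$, so that $\la_{(-1)}e^\la=\sum_{\al\in\A}(\pi_\al\la)_{(-1)}e^\la$ with each $\pi_\al\la\in V_\al$. Fix $\al$ and apply \eqref{twistednop} with $a=\pi_\al\la$ and $b=e^\la$; since $\pi_\al\la\in\lieh$, formula \eqref{azerola} gives $x_{(j)}e^\la=\de_{j,0}(x|\la)e^\la$ for any $x\in\lieh$ and $j\ge0$, so only the $j=-1$ and $j=0$ terms of \eqref{twistednop} survive:
\[
\nop{Y(\pi_\al\la,z)Y(e^\la,z)}=Y\bigl((\pi_\al\la)_{(-1)}e^\la,z\bigr)+z^{-1}\bigl((\al_0+\N)\pi_\al\la\,\big|\,\la\bigr)Y(e^\la,z).
\]
Summing over $\al\in\A$ and using the skew-symmetry of $\N$ (so that $\sum_\al(\N\pi_\al\la|\la)=(\N\la|\la)=0$) yields
\[
\sum_{\al\in\A}\nop{Y(\pi_\al\la,z)Y(e^\la,z)}=D_zY(e^\la,z)+z^{-1}S\,Y(e^\la,z),\qquad S:=\sum_{\al\in\A}\al_0\,(\pi_\al\la|\la).
\]

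Next I would pass from the ordering $\nop{\;}$ of \eqref{twistednop}, which places a mode indexed by $m$ on the left exactly when $\re m\le-1$, to the ordering $\nopm{\;}$ of \eqref{nopm}, which uses the condition $m\in\CC^-$. A mode-by-mode comparison over $m\in\al$ shows the two orderings agree for $\al=\ZZ$ and for $\al\in\A^+$, and differ only on the single mode $m=\al_0$ when $\al\in\A^-$, which $\nopm{\;}$ puts on the left and $\nop{\;}$ on the right. Their difference is the commutator of that mode with $Y(e^\la,z)$; expanding $z^{-\al_0-1-\N}\pi_\al\la=z^{-\al_0-1}\sum_{j\ge0}\frac{(-\ze)^j}{j!}\N^j\pi_\al\la$, applying \eqref{hvobrext}, and collapsing the resulting double $\ze$-series via $\sum_{j+l=n}\frac{(-1)^j}{j!\,l!}=\de_{n,0}$, one finds it equals $z^{-1}(\pi_\al\la|\la)Y(e^\la,z)$. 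Hence
\[
\nopm{Y(\la,z)Y(e^\la,z)}=\sum_{\al\in\A}\nop{Y(\pi_\al\la,z)Y(e^\la,z)}+z^{-1}\sum_{\al\in\A^-}(\pi_\al\la|\la)\,Y(e^\la,z),
\]
and combining with the first paragraph, $D_zY(e^\la,z)=\nopm{Y(\la,z)Y(e^\la,z)}-z^{-1}\bigl(S+\sum_{\al\in\A^-}(\pi_\al\la|\la)\bigr)Y(e^\la,z)$.

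It then remains to identify the scalar $S+\sum_{\al\in\A^-}(\pi_\al\la|\la)$ with $-b_\la$. Here I would use $\lieh_\al\perp\lieh_\be$ for $\be\ne-\al$, so that setting $c_\al:=(\pi_\al\la|\pi_{-\al}\la)=(\pi_\al\la|\la)$ one has $c_\al=c_{-\al}$, $|\la|^2=\sum_{\al\in\A}c_\al$, and $|\la_0|^2=c_\ZZ$. Symmetrizing $S$ under $\al\mapsto-\al$ gives $2S=\sum_{\al\in\A}(\al_0+(-\al)_0)c_\al$, and inspecting canonical representatives shows $\al_0+(-\al)_0=-1$ when $\re\al_0\in(-1,0)$ and $\al_0+(-\al)_0=0$ when $\re\al_0=0$ (in particular for $\al=\ZZ$); splitting $\A^-$ into the part with $\re\al_0<0$ and the part with $\re\al_0=0,\,\im\al_0<0$, and using $c_\al=c_{-\al}$ to match the latter with the $\A^+$ contribution, a short bookkeeping gives $S+\sum_{\al\in\A^-}(\pi_\al\la|\la)=\tfrac12(|\la|^2-|\la_0|^2)=-b_\la$. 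I expect the main obstacle to be precisely these last two steps: matching the two normal orderings mode by mode (the classes $\al$ with $\re\al_0=0$ need care, and one must verify no other modes are affected), and the combinatorial bookkeeping that collapses the $\al_0$-weighted inner products to the closed form $b_\la$.
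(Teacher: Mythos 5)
Your proposal is correct and follows essentially the same route as the paper's proof: express $Y(\la_{(-1)}e^\la,z)$ via \eqref{twistednop} (only the $j=-1,0$ terms survive and $(\N\la|\la)=0$), convert $\nop{\;}$ to $\nopm{\;}$ by the single-mode commutator correction over $\al\in\A^-$ computed with \eqref{hvobrext}, and evaluate $(\mathcal S\la|\la)$ by symmetrizing under $\al\mapsto-\al$ using \eqref{projflip} and \eqref{al0cases}. The bookkeeping in your final step checks out (your uniform symmetrization absorbs the $\al_0=-1/2$ case the paper treats separately), so no gaps.
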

\begin{proof}
From \eqref{azerola} and \eqref{twistednop}, we obtain
\begin{equation*}
\begin{split}
\nop{Y(\la &,z)Y(e^\la,z)} = \sum_{\al \in \A}\nop{Y(\pi_\al \la, z) Y(e^\la,z)} \\
& = \sum_{\al \in \mathcal{A}}Y\bigl{(}(\pi_\al\la)_{(-1)}e^\la,z\bigr{)} 
+ \sum_{\al \in \mathcal{A}}z^{-1}Y\bigl{(}((\al_0 + \N)\pi_\al\la)_{(0)}e^\la,z\bigr{)}\\
& = Y(\la_{(-1)}e^\la,z) + z^{-1}\bigl{(}(\mathcal{S}+\N)\la\big{|}\la\bigr{)}Y(e^\la,z)\\
&=Y(\la_{(-1)}e^\la,z) + z^{-1} (\mathcal{S}\la | \la) Y(e^\la,z).
\end{split}
\end{equation*}
In the last line we used that $(\N \la | \la) = 0$ by \eqref{inv}. 

From \eqref{bilproj} and the symmetry of $(\cdot | \cdot)$ we have
\begin{equation}\label{projflip}
(\pi_\al \la | \la) = (\pi_{-\al}\la | \la).
\end{equation}
We also observe that 
\begin{equation}\label{al0cases}
(-\al)_0 = \begin{cases}
-\al_0, & \re \al_0 = 0, \\
-\al_0-1, & \re \al_0 < 0.
\end{cases}
\end{equation}
Hence for $\al\neq \ZZ, -\frac{1}{2}+\ZZ$, we have
\begin{equation*}
(\al_0\pi_\al\la | \la) +((-\al)_0 \pi_{-\al}\la | \la) =
\begin{cases}
0, & \re \al_0 = 0, \\
-(\pi_\al\la | \la), & \re \al_0 < 0.
\end{cases} 
\end{equation*}

In the case of $\al_0 = -1/2$, we have $(-\al)_0 = \al_0$. Thus 
\begin{equation*}
\begin{split}
(\mathcal{S} \la | \la) & = \sum_{\al \in \A}(\al_0\pi_\al \la | \la) \\
& =-\frac{1}{2}(\pi_{1/2+\ZZ}\la|\la)+ \sum_{\al \in \A \setminus\{1/2+\ZZ\}}\,\frac{1}{2}\bigl{(}(\al_0\pi_\al \la | \la)+((-\al)_0\pi_{-\al}\la|\la)\bigr{)}\\
&= -\frac{1}{2}\sum_{\substack{\al \in \A \\ \re \al_0 < 0}} \, (\pi_\al\la | \la).
\end{split}
\end{equation*}
The final summation can alternatively be indexed by the complement of $\{\al \in \A \,| \re \al_0 = 0\}$ in $\A$, and from \eqref{projflip} we have 
\begin{equation*}
\begin{split}
\frac{1}{2}\sum_{\substack{\al \in \A \\ \re \al_0 = 0}} \, (\pi_\al\la | \la) &= \frac{1}{2}|\la_0|^2 + \sum_{\al \in \A^+}  (\pi_\al \la|\la) \\
&= - \frac{1}{2}|\la_0|^2+\sum_{\mathclap{\al \in \A^+ \cup \, \{\ZZ\}}} \, (\pi_\al \la|\la).
\end{split}
\end{equation*}
Thus 
\begin{equation}\label{Dzeq1}
\begin{split}
(\mathcal{S}\la|\la) &= -\frac{1}{2}\sum_{\al \in \A}(\pi_\al \la|\la)+\frac{1}{2}\sum_{\substack{\al \in \A \\ \re \al_0 = 0}} \, (\pi_\al\la | \la) \\
&= - \frac{1}{2}|\la|^2-\frac{1}{2}|\la_0|^2 +\sum_{\mathclap{\al \in \A^+ \cup \, \{\ZZ\}}} \,(\pi_\al \la|\la).
\end{split}
\end{equation}

Noting that 
\begin{align*}
\nop{Y&(\la,z)Y(e^\la,z)} \\
& = \sum_{\mathclap{\substack{m\in \CC \\ \re m \leq -1}}}\bigl{(}z^{-m-1-\N}\la\bigr{)}_{(m+\N)}Y(e^\la,z)
 + Y(e^\la,z) \sum_{\mathclap{\substack{m\in \CC\\ \re m > -1}}}\bigl{(}z^{-m-1-\N}\la\bigr{)}_{(m+\N)}\\
&= \nopm{Y(\la,z)Y(e^\la,z)}
+ \sum_{\al \in \A^-}\Bigl{[}Y(e^\la,z),\bigl{(}z^{-\al_0-1-\N}\la\bigr{)}_{(\al_0+\N)} \Bigr{]},
\end{align*}
and that by \eqref{hvobrext} 
\begin{equation*}
\begin{split}
\sum_{\al \in \A^-}\Bigl{[}(z^{-\al_0-1-\N}\la&)_{(\al_0+\N)} ,Y(e^\la,z) \Bigr{]} = z^{-1} \sum_{\al \in \A^-}(\pi_\al \la | \la)Y(e^\la,z)\\
&= z^{-1}|\la|^2 \, Y(e^\la,z) -z^{-1}\sum_{\mathclap{\al \in \mathcal{A}^+ \cup \, \{\ZZ\}}}\, (\pi_\al \la | \la) \, Y(e^\la,z),
\end{split}
\end{equation*}
we obtain
\begin{equation}\label{Dzeq2}
\begin{split}
\nop{Y(\la,z)Y(e^\la,z)} &= \nopm{Y(\la,z)Y(e^\la,z)} 
- z^{-1}|\la|^2 \, Y(e^\la,z)\\
&+z^{-1}\sum_{\mathclap{\al \in \mathcal{A}^+ \cup \, \{\ZZ\}}}\, (\pi_\al \la | \la) \, Y(e^\la,z).
\end{split}
\end{equation}
The desired result follows immediately from \eqref{Dm1}, \eqref{Dzeq1}, and \eqref{Dzeq2}.
\end{proof}

\subsection{Twisted Vertex Operators}\label{subtvo}

In order to state the main result of this section, we need to introduce some additional notation. For any $\la\in\lieh$, we let
\begin{equation*}
\begin{split}
a_\la &= a_\la(\ze)= \frac{(\P^- \la_0 | \la) - |\la_0|^2}{2},\\
 c_\la &= 2\pi \ii \, a_\la(2\pi\ii) + b_\la = \frac{(\P^-_{2\pi\ii} \la_0 | \la) - |\la|^2}{2}, \\
 \tau_{\la} &= \th_\la(2\pi\ii),
\end{split}
 \end{equation*}
where $\P^-$ is given by \eqref{Ppm}, $\th_\la$ by \eqref{thetah}, $b_\la$ by \eqref{bla}, and as before $\la_0 = \pi_0\la$.

\begin{remark}\label{rphss1}
When the automorphism $\ph$ is semisimple, we have $\ph=\si$ and $\N=0$. Then $\P^\pm=\Id$, $\th_\la=e^{\ze\la_0}=z^{\la_0}$,
$\tau_\la=e^{2\pi\ii\la_0}$, $a_\la=0$, and $c_\la=b_\la$. In this case, the result of the next theorem reduces to \cite[Lemma 4.1]{BK}.
\end{remark}

\begin{theorem}
Assume that\/ $M$ is a\/ $\ph$-twisted\/ $V_Q$-module.
Then there exist operators\/ $U_\la$ $(\la \in Q)$ on\/ $M$, independent of\/ $z$ and\/ $\ze$, such that
\begin{equation}\label{twistedvop}
Y(e^\la,z) = U_\la \th_{\la} e^{\ze a_\la}z^{b_\la}E_\la(z),
\end{equation}
and
\begin{align}
[a_{(m+\N)},U_\la] &= \de_{m,0}(\pi_0 a | \la) U_\la, \label{aUcomm}\\
U_{\ph\la}& =\eta(\la) e^{2\pi\ii c_\la} U_\la \tau_\la,\label{phieq}
\end{align}
for\/ $a \in \lieh,$ $m \in \CC$.
\end{theorem}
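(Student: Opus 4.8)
The plan is to extract the operators $U_\la$ from the field $U_\la(z)$ defined in \eqref{ulambda} and then to establish the two relations \eqref{aUcomm} and \eqref{phieq}. First I would observe, as in \prref{umodes}, that $U_\la(z)$ commutes with all nonzero modes $a_{(m+\N)}$ and transforms under the zero modes $a_{(0+\N)}$ only by the scalar $(a_0|\la)$; in particular $D_z U_\la(z)$ has the same commutation properties as $U_\la(z)$. The key computation is to feed \eqref{expfieldla}, namely $Y(e^\la,z) = U_\la(z)\th_\la E_\la(z)$, into the differential equation of \leref{Dzlem}. Using \leref{derlemma} to differentiate the exponentials $E_\la(z)_\pm$ and $\th_\la$ (the operator $\mathcal S$ and the bilinear forms $(\P^-\la_0|\la)$, $|\la_0|^2$, $|\la|^2$ will appear from the $C$-terms in \eqref{derexp}), together with \prref{emodes} and \leref{lthmult}, this should reduce the equation $D_z Y(e^\la,z) = \nopm{Y(\la,z)Y(e^\la,z)} + z^{-1}b_\la Y(e^\la,z)$ to a scalar partial differential equation
\begin{equation*}
D_z U_\la(z) = (z^{-1}b_\la - \text{(correction terms)})\,U_\la(z),
\end{equation*}
where the correction terms are exactly $\partial_\ze a_\la$-type and $z^{-1}(\text{coeff})$-type expressions built from $a_\la(\ze)$ and $b_\la$. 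Solving this ODE in the two commuting variables gives $U_\la(z) = U_\la\,e^{\ze a_\la}z^{b_\la}$ for some $z,\ze$-independent operator $U_\la$, and plugging back into \eqref{expfieldla} yields \eqref{twistedvop}. The relation \eqref{aUcomm} is then immediate from \prref{umodes} since $e^{\ze a_\la}z^{b_\la}$ is scalar.

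For \eqref{phieq} I would apply the $\ph$-equivariance axiom \eqref{phequiv} to the vector $e^\la$. Since $\ph(e^\la) = \eta(\la)^{-1}e^{\ph\la}$, this reads $\eta(\la)^{-1}Y(e^{\ph\la},z) = e^{2\pi\ii D_\ze}Y(e^\la,z)$, i.e.\ the effect of replacing $\ze$ by $\ze + 2\pi\ii$ and $z$ by $e^{2\pi\ii}z$ (equivalently, analytic continuation) in $Y(e^\la,z)$. I would compute $e^{2\pi\ii D_\ze}$ applied to each factor of \eqref{twistedvop} separately: on $\th_\la$ it produces $\th_\la \tau_\la$ up to the scalar cocycle factor from \leref{lthmult} (with $\tau_\la = \th_\la(2\pi\ii)$); on $e^{\ze a_\la}z^{b_\la}$ it produces the scalar $e^{2\pi\ii a_\la(2\pi\ii)}$, combining with $b_\la$ to give $e^{2\pi\ii c_\la}$ by the very definition $c_\la = 2\pi\ii a_\la(2\pi\ii) + b_\la$; and on $E_\la(z)$ the shift turns the modes $a_{(m+\N)}$ of $e^\la$ into those of $e^{\ph\la}$, i.e.\ $E_\la(z)$ becomes $E_{\ph\la}(z)$ (one must check the matching with the $E_{\ph\la}(z)_\pm$ factors, using that $\si$ is unchanged and $\N$ conjugates correctly). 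Comparing with the factorization of $Y(e^{\ph\la},z) = U_{\ph\la}\th_{\ph\la}e^{\ze a_{\ph\la}}z^{b_{\ph\la}}E_{\ph\la}(z)$ and using the invariance of all the relevant data under $\ph$ (so $a_{\ph\la} = a_\la$, $b_{\ph\la}=b_\la$, $\th_{\ph\la} = \th_\la$ as operators, since $\ph$ preserves $(\cdot|\cdot)$ and commutes with $\si,\N$) should isolate $U_{\ph\la} = \eta(\la)e^{2\pi\ii c_\la}U_\la\tau_\la$, which is \eqref{phieq}.

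I expect the main obstacle to be the bookkeeping in the ODE step: tracking all the scalar correction terms that arise when commuting $E_\la(z)_+^{-1}$ past $Y(e^\la,z)$, differentiating the product of three exponentials via \leref{derlemma} (each contributing a $C/2$ term), and matching them against the right-hand side of \leref{Dzlem} to verify that the residual scalar equation for $U_\la(z)$ is consistent and has precisely the solution $e^{\ze a_\la}z^{b_\la}$. In particular one must be careful that $a_\la$ and $b_\la$ are $\ze$-dependent and $\ze$-independent respectively, so that $D_z = \partial_z + z^{-1}\partial_\ze$ acts on $e^{\ze a_\la}z^{b_\la}$ producing both an $a_\la$ term (from $\partial_z$ of $z^{b_\la}$ is wrong — rather $\partial_\ze$ hits $e^{\ze a_\la}$) and, since $a_\la$ itself depends on $\ze$ through $\P^-$, a further contribution; reconciling this with the closed-form answer is the delicate point. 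A secondary subtlety in the $\ph$-equivariance step is checking that $e^{2\pi\ii D_\ze}E_\la(z) = E_{\ph\la}(z)$ exactly, including the half-integer-shift conventions hidden in $\A^\pm$ and the operator $\mathcal S$, rather than up to a scalar. Once these two verifications are in place, the rest is routine.
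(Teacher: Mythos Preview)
Your plan is the same as the paper's: derive a scalar PDE for $U_\la(z)$ by differentiating \eqref{expfieldla} and comparing with \leref{Dzlem}, solve it to peel off $e^{\ze a_\la}z^{b_\la}$, then read off \eqref{phieq} by applying $e^{2\pi\ii D_\ze}$ factor by factor. One concrete correction to your bookkeeping in the $\ph$-equivariance step: the claim $\th_{\ph\la}=\th_\la$ is false in general, since $(\ph\la)_0=e^{-2\pi\ii\N}\la_0\neq\la_0$ when $\N\la_0\neq0$; what actually happens is that $e^{2\pi\ii D_\ze}\th_\la$ equals (a $\ze$-dependent scalar) times $\tau_\la\,\th_{\ph\la}$, and this $\th_{\ph\la}$ matches the $\th_{\ph\la}$ appearing in $Y(e^{\ph\la},z)$, while the extra $\ze$-dependent scalar cancels against a companion scalar coming from $e^{2\pi\ii D_\ze}e^{\ze a_\la}$ (recall $a_\la$ itself depends on $\ze$ through $\P^-_\ze$), leaving precisely $e^{2\pi\ii c_\la}$.
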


\begin{proof}
Recalling \eqref{ulambda} and \eqref{expfieldla}, in order to prove \eqref{twistedvop}, we will first compute $D_z U_\la(z)$.
Using \leref{derlemma} with $D=D_z$, $X = (\ze\P^- \la)_{(0+\N)}$, 
\begin{equation*}
D(X) = \bigl{(}D_z(\ze \P^-(\la))\bigr{)}_{(0+\N)} = (z^{-\N-1}\la)_{(0+\N)},
\end{equation*}
and
\begin{equation*}
C = [(\ze\P^-\la)_{(0+\N)},(z^{-\N-1}\la)_{(0+\N)}]=z^{-1}\bigl{(}(z^{-\N}-1)\la_0 | \la \bigr{)},
\end{equation*}
we obtain 
\begin{equation*}
D_z\th_{\la}=\th_{\la}\Bigl{(}(z^{-\N-1}\la)_{(0+\N)}-\frac{1}{2}z^{-1}\bigl{(} (z^{-\N}-1)\la_0\big{|}\la\bigr{)} \Bigr{)}.
\end{equation*}
Next we compute
\begin{equation*}
\begin{split}
D_zE_\la(z)&= \Bigl{(}\sum_{m \in \CC^-}(z^{-m-\N-1}\la)_{(m+\N)}\Bigr{)}E_\la(z)\\
&\qquad+ E_\la(z)\Bigl{(}\sum_{m \in \CC^+}(z^{-m-\N-1}\la)_{(m+\N)}\Bigr{)}\\
&=\nopm{\bigl{(}Y(\la,z)-(z^{-1-\N}\la)_{(0+\N)}\bigr{)}E_\la(z)},
\end{split}
\end{equation*}
which gives
\begin{equation}\label{dzyel1}
\begin{split}
D_zY(e^\la,z) = \nopm{Y(\la,z)&Y(e^\la,z)} + (D_zU_\la(z))\th_{\la}E_\la(z)\\
&-\frac{1}{2}z^{-1}\bigl{(}(z^{-\N}-1)\la_0 | \la\bigr{)}Y(e^\la,z).
\end{split}
\end{equation}
By comparing \eqref{dzyel1} with \leref{Dzlem}, we find that $U_\la(z)$ satisfies the following partial differential equation in $z$ and $\ze$:
\begin{equation*}
D_zU_\la(z) = \frac{1}{2}z^{-1}\bigl{(}(z^{-\N}-1)\la_0 \big{|} \la\bigr{)}U_\la(z) +z^{-1} b_\la U_\la(z).
\end{equation*}
This implies that the operator
\begin{equation}\label{Uops}
U_\la = z^{-b_\la}e^{-\ze a_\la}U_\la(z)
\end{equation}
satisfies $D_zU_\la = 0$, which means that $U_\la$ is a function of $z e^{-\ze}$. 
Since we quotient by the relations $z^c = e^{c\ze}$ $(c\in\CC)$, we can assume that $U_\la$
is independent of $z$ and $\ze$.

Now \eqref{aUcomm} follows immediately from \prref{umodes}.
To finish the proof of the theorem, it remains to prove \eqref{phieq}.
Applying \eqref{phequiv} and \eqref{phext}, we get
\begin{equation}\label{compare}
e^{2\pi\ii D_\ze}Y(e^\la,z) = \eta(\la)^{-1}Y(e^{\ph \la},z).
\end{equation}
It is easy to check that 
\begin{equation*}
\begin{split}
e^{2\pi \ii D_\ze}\th_{\la} &= e^{\frac{1}{2}(\ze \P^-(1-\ph)\la_0|\la)}\tau_\la\th_{\ph \la},\\
e^{2\pi \ii D_\ze}e^{\ze a_\la}&=e^{\frac{1}{2}(\ze \P^-(\ph-1)\la_0|\la)+2\pi \ii a_\la(2\pi \ii)}e^{\ze a_\la}, \\
e^{2\pi \ii D_\ze}E_\la(z) &= E_{\ph \la}(z).
\end{split}
\end{equation*}
Hence
\begin{equation*}
e^{2\pi \ii D_\ze}Y(e^\la,z) = e^{2\pi\ii c_\la}U_\la\tau_\la \th_{\ph \la }e^{\ze\al_\la}z^{b_\la}E_{\ph \la}(z).
\end{equation*}
On the other hand,
\begin{equation*}
Y(e^{\ph \la},z) = U_{\ph \la}\th_{\ph \la}e^{\ze a_\la}z^{b_\la}E_{\ph \la}(z).
\end{equation*}
Substituting the last two expressions into \eqref{compare},
we obtain \eqref{phieq}.
\end{proof}

\section{Products of Twisted Vertex Operators}\label{sptvo}

Our next goal is to find the relations satisfied by the operators $U_\la$. To this end, we will express the product
\begin{equation*}
(z_1-z_2)^{-(\la |\mu)}E_\la(z_1)E_\mu(z_2)\big{|}_{z_1=z_2=z}
\end{equation*} 
in terms of the exponential $E_{\la + \mu}(z)$.  
Here and further, we will expand $(z_1-z_2)^n$ as a formal power series in the domain $|z_1|>|z_2|$, namely,
\begin{equation*}
(z_1-z_2)^n = \sum_{j=0}^\infty \binom{n}{j} z_1^{n-j} (-z_2)^j.
\end{equation*}

\subsection{Special Functions}
We will make use of several well-known special functions, which we briefly review following \cite{BE1,BE2}.
The first of these is the \emph{Lerch transcendent}, a complex-valued function of three complex variables defined by the convergent series
\begin{equation}\label{lerch}
\Phi(z,s,a) = \sum_{m=0}^\infty \frac{z^m}{(m+a)^s}, \qquad |z|<1, \;\; a\neq 0,-1,-2,\ldots.
\end{equation}
The integral formula
\begin{equation}\label{lerchint}
\Phi(z,s,a) = \frac{1}{\Gamma(s)} \int_{0}^\infty\frac{t^{s-1}e^{-at}}{1-ze^{-t}} \, dt
\end{equation}
analytically extends $\Phi$ to include the cases:
\begin{enumerate}
\item[1.] $\re a > 0$, $\re s >0$, $z \in \CC \setminus[1,\infty)$, 
\item[2.] $\re a > 0$, $\re s >1$, and $z=1$.
\end{enumerate}
Throughout the rest of this section, when dealing with $\ln(z)$ we will assume $-\pi < \arg(z) < \pi$. For $|z|>1$, $z \ \notin (-\infty,1)\cup(1,\infty)$, and $a \notin \ZZ$, the Lerch transcendent $\Phi(z,1,a)$ admits the following convergent expansion (see \cite{FKS}):
\begin{equation}\label{lerchref}
\begin{split}
\Phi(& z,1,a) = \pi \ii z^{-a}\sgn\omega(z)+z^{-a}\pi\cot(\pi a)+z^{-1}\sum_{m=0}^\infty\frac{z^{-m}}{m+1-a} \\
&= \pi \ii z^{-a}\sgn\omega(z)+z^{-a}\pi\cot(\pi a)+z^{-1} \Phi(z^{-1},1,1-a),
\end{split}
\end{equation}
where $\omega(z) = \arg(\ln z)$.
We will also use the following obvious identities:
\begin{equation}\label{lerchshift}
\Phi(z,s,a) = z^n\Phi(z,s,a+n)+\sum_{k=0}^{n-1}\frac{z^k}{(k+a)^s} \,,
\end{equation}
and
\begin{equation}\label{lerchder}
\partial_a \Phi(z,s,a) = -s\Phi(z,s+1,a).
\end{equation}

Other functions that can be derived from $\Phi$ are relevant to this text, including the \emph{polylogarithm}
\begin{equation*}
\Li_s(z) = z \Phi(z,s,1) = \sum_{n=1}^\infty \frac{z^n}{n^s}
\end{equation*}
and the \emph{Riemann zeta-function}
\begin{equation*}
\ze(s) = \Phi(1,s,1) = \sum_{n=1}^\infty \frac{1}{n^s} \,.
\end{equation*}
We also need the closely related \emph{digamma function}:
\begin{equation*}
\Psi(a) = -\gamma + \sum_{m=0}^\infty\Bigl{(}\frac{1}{m+1}-\frac{1}{m+a}\Bigr{)},\qquad a\neq 0,-1,-2,\ldots,
\end{equation*}
where $\gamma \approx 0.577216$ is the Euler--Mascheroni constant. The digamma function satisfies the following \emph{reflection formula}:
\begin{equation}\label{digref}
\Psi(-a)-\Psi(a+1) = \pi \cot \pi a = 2 \pi \ii \Li_0(e^{-2\pi \ii a}) +\pi \ii, 
\end{equation}
for $a \in \CC \setminus\ZZ$.
The derivatives of the digamma function are the \emph{polygamma functions}:
\begin{equation}\label{polygamma}
\begin{split}
\Psi^{(j)}(a) &= \partial^j_a \Psi(a) = (-1)^{j+1}j!\sum_{m=0}^\infty\frac{1}{(m+a)^{j+1}} \\
&= (-1)^{j+1}j! \Phi(1,j+1,a),
\end{split}
\end{equation}
defined for $j\in \NN$ and $a\notin -\NN$.
The polygamma functions also satisfy a reflection formula obtained by taking the $j$th derivative of \eqref{digref} with respect to $a$:
\begin{equation}\label{polyref}
(-1)^j\Psi^{(j)}(-a)-\Psi^{(j)}(a+1)=-(-2\pi \ii)^{j+1}\Li_{-j}(e^{-2\pi\ii a}), 
\end{equation}
for $a \in \CC \setminus\ZZ$.

The polygamma functions are related to the Riemann zeta-function as follows:
\begin{equation}\label{polyzeta}
\ze(j+1) = \frac{(-1)^{j+1}}{j!}\Psi^{(j)}(1), \qquad j=1,2,3,\ldots.
\end{equation}
The values of the Reimann zeta-function for nonnegative even integers can be obtained using the generating function
\begin{equation}\label{zegen}
\sum_{j=0}^{\infty}\ze(2 j)x^{2j} = -\frac{1}{2}\pi x\cot(\pi x).
\end{equation}

\subsection{Products of Exponentials}
Recall that for $\al\in\CC/\ZZ$, we define $\al_0\in\al$ to be the unique element such that $-1 < \re \al_0 \leq 0$. 
Let 
\begin{equation}\label{alprime}
\al_0'=\begin{cases}
\alpha_0+1, & \al_0 \in \CC^- \cup \{0\},\\
\alpha_0, & \al_0 \in \CC^+,
\end{cases}
\end{equation}
and define the operator $\mathcal{S}'\colon \lieh \to \lieh$ by letting $\mathcal{S}'a = \al_0'a$ for $a \in \al$, $\al \in \A$ and extending linearly.  
We also introduce the operator $\Psi(\mathcal{S'} + \N) \colon \lieh\to \lieh$ via the following Taylor series expansion:
\begin{equation}\label{seriesexp}
\begin{split}
\Psi(\mathcal{S'}+ \N)\la & =e^{\N \partial_z}\Psi(z)\big{|}_{z=\mathcal{S}'}\la
=\sum_{j=0}^\infty\frac{1}{j!}\Psi^{(j)}(\mathcal{S}')\N^j\la\\
&=\sum_{\al \in \A}\sum_{j=0}^\infty\frac{1}{j!}\Psi^{(j)}(\al_0')\N^j\pi_\al\la.
\end{split}
\end{equation}

\begin{proposition}\label{ecomb} 
For\/ $\la,\mu \in Q$, we have
\begin{equation*}
\begin{split}
(z_1-z_2)^{-(\la|\mu)}E_\la(z_1)&E_\mu(z_2)\big{|}_{z_1=z_2=z} = z^{-(\la|\mu)} B_{\la,\mu}\,E_{\la+\mu}(z),
\end{split}
\end{equation*}
where 
\begin{equation}\label{Blambda}
B_{\la,\mu} = \exp \bigl{(} (\Psi(\mathcal{S}'+\N)+\ga)\la\big{|} \mu \bigr{)}
\end{equation}
and\/ $\gamma$ is the Euler--Mascheroni constant. 
\end{proposition}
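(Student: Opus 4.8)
The plan is to compute the product $E_\la(z_1)_\pm E_\mu(z_2)_\mp$ by commuting the annihilation-type factor past the creation-type factor, using the Heisenberg commutation relation \eqref{hbrext}. Write $E_\la(z)_+ = \exp(-\sum_{m\in\CC^-}((m+\N)^{-1}z^{-m-\N}\la)_{(m+\N)})$ and similarly $E_\la(z)_-$ over $m\in\CC^+$. Since $E_\la(z_1)_-$ involves only positive-real-part modes and $E_\mu(z_2)_+$ only negative-real-part modes, by \eqref{hbrext} the commutator $[\,\cdot\,,\,\cdot\,]$ of the two exponents is a scalar (a multiple of the identity), so the Baker--Campbell--Hausdorff formula gives $E_\la(z_1)_- E_\mu(z_2)_+ = e^{-C}E_\mu(z_2)_+E_\la(z_1)_-$ where $-C$ is that scalar. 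The only nontrivial reordering is $E_\la(z_1)_- $ past $E_\mu(z_2)_+$; all other pairs of factors already commute (they involve modes on the same side of the triangular decomposition, or act on disjoint mode-index sets). Thus $E_\la(z_1)E_\mu(z_2) = e^{-C} E_{\la}(z_1)_+ E_\mu(z_2)_+ E_\la(z_1)_- E_\mu(z_2)_- = e^{-C} E_{\la+\mu}(z_1,z_2)$ in an obvious two-variable sense, and then setting $z_1=z_2=z$ collapses the creation and annihilation factors into $E_{\la+\mu}(z)$.

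The main computation is therefore the scalar $C$. Using \eqref{hbrext} and $(\N a|b)+(a|\N b)=0$ from \eqref{inv}, one finds
\begin{equation*}
C = \sum_{m\in\CC^+}\Bigl(\,(m+\N)^{-1}(-m+\N)^{-1}\,z_1^{-m-\N}z_2^{m-\N}\,\la\,\Big|\,\mu\Bigr),
\end{equation*}
where the sum runs over $m\in\alpha\cap\CC^+$ for each $\alpha\in\A$ (equivalently over all $m\in\CC^+$ with the convention \eqref{modeproj}). Expanding $z_1^{-m-\N}z_2^{m-\N}$ and the rational factors in $\N$ as finite power series (legitimate since $\N$ is nilpotent), and writing $m=\al_0'+\ZZ_{\ge 0}$ according to \eqref{alprime} so that the index set becomes $m+k$, $k\ge 0$ with $m=\al_0'$, one recognizes each coefficient series in $z_1/z_2$ and in $\N$ as a value of the Lerch transcendent $\Phi$. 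Upon setting $z_1=z_2=z$, the $z$-dependence reduces (the $z^{-m-\N}z^{m-\N}$ telescopes against the overall $(z_1-z_2)^{-(\la|\mu)}$ to leave exactly $z^{-(\la|\mu)}$, which is extracted by hand and contributes the $\Li_0$/$\pi\ii\cot$ pieces that combine across $\pm\al$), and what remains is $\Phi(1,s,a)$-type data, i.e.\ polygamma values by \eqref{polygamma}. Collecting the $\N^j$ terms via the Taylor expansion \eqref{seriesexp} identifies the surviving sum as $(\Psi(\mathcal{S}'+\N)\la\,|\,\mu)$ up to the additive constant $\gamma$, the Euler--Mascheroni constant, which appears because the digamma function in \eqref{seriesexp} carries a $-\gamma$ while the naive Lerch sum does not; hence the $+\gamma$ in \eqref{Blambda}, and $B_{\la,\mu}=e^{-C}=\exp((\Psi(\mathcal{S}'+\N)+\gamma)\la\,|\,\mu)$.

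The delicate point — and the main obstacle — is the bookkeeping of the boundary modes and the analytic continuation. The series $\sum_{m\in\CC^+}\cdots$ contains eigencomponents indexed by $\al\in\A$ with $\al_0$ on the imaginary axis, where the split between $\CC^+$ and $\CC^-$ in \eqref{cplus} is asymmetric, so the contributions of $\pi_\al\la$ and $\pi_{-\al}\la$ must be paired carefully; this is exactly where the reflection formula \eqref{lerchref} (hence the $\pi\ii\,\sgn\,\omega(z)$ and $\pi\cot(\pi a)$ terms, equivalently \eqref{digref}) enters, and one must check that after setting $z_1=z_2=z$ and pulling out $z^{-(\la|\mu)}$ all the $\omega(z)$-dependent and branch-dependent terms cancel between $\pm\al$, leaving the genuinely constant operator $\Psi(\mathcal{S}'+\N)+\gamma$. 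A secondary subtlety is justifying that the BCH reordering produces a scalar and not merely a central element of $\hhp$: this is immediate from \eqref{hbrext}, since the bracket of any mode with index in $\CC^+$ and any mode with index in $\CC^-$ lands in $\CC\cdot\Id$. Once these points are settled, the identity follows by assembling the finitely many $\N^j$-graded pieces.
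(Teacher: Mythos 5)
Your overall architecture matches the paper's: normally order $E_\la(z_1)_-$ past $E_\mu(z_2)_+$ via Baker--Campbell--Hausdorff (the bracket is central by \eqref{hbrext}), recognize the resulting scalar as a sum of Lerch-transcendent values, and use the prefactor $(z_1-z_2)^{-(\la|\mu)}$ to regularize the divergent piece. But two steps, as written, would fail. First, your formula for $C$ is incorrect: the bracket $[a_{(m+\N)},b_{(-m+\N)}]=((m+\N)\pi_\al a|b)$ supplies a factor $(m+\N)$ that cancels one of the two inverses, and moving the remaining operator across the bilinear form flips the sign of $\N$ by \eqref{inv}; the correct scalar is, up to the overall sign convention,
\begin{equation*}
\sum_{m\in\CC^+}\bigl((m+\N)^{-1}(z_2/z_1)^{m+\N}\pi_\al\la\,\big|\,\mu\bigr),
\end{equation*}
whose leading behaviour in each eigencomponent is $1/m$, not $1/m^2$ as in your expression. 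With your extra factor of $(m+\N)^{-1}$ every term would converge at $z_1=z_2$ to a value of $\Phi(1,j+2,\al_0')$, there would be no divergence to regularize, and no $\ga$ could appear --- so the slip is not cosmetic: it would replace digamma data by trigamma data and lose the constant $\ga$ entirely.

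Second, the regularization of the $j=0$ term does not involve the reflection formula \eqref{lerchref} or any cancellation between $\al$ and $-\al$. The evaluation at $z_1=z_2$ is a limit taken from within the domain $|z_1|>|z_2|$ where the series defining $\exp(C)$ converges for $z_1\neq z_2$; no continuation across $|z_1|=|z_2|$ is performed, so no $\sgn\omega$ or $\cot$ terms arise and nothing needs to cancel between $\pm\al$. The correct mechanism, carried out for each $\al$ separately, is to split $(z_1-z_2)^{-(\la|\mu)}=\prod_\al(z_1-z_2)^{-(\pi_\al\la|\mu)}$, expand $(z_1-z_2)^{-t}=z_1^{-t}\exp\sum_{m\ge1}\tfrac{t}{m}(z_2/z_1)^m$ with $t=(\pi_\al\la|\mu)$, and combine term by term with the divergent series $-t\sum_{m\ge0}\tfrac{1}{m+\al_0'}(z_2/z_1)^{m+\al_0'}$; the combined series converges at $z_1=z_2$ to $\sum_{m\ge0}\bigl(\tfrac{t}{m+1}-\tfrac{t}{m+\al_0'}\bigr)=t(\Psi(\al_0')+\ga)$, which is precisely where the Euler--Mascheroni constant enters. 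The reflection formula and the $\pm\al$ pairing are needed only later, when comparing $B_{\la,\mu}$ with $B_{\mu,\la}$ to compute $C_{\la,\mu}$ in \thref{commutant} and when proving locality in \thref{main}.
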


\begin{proof}
First note that using 
\eqref{seriesexp} we can rewrite \eqref{Blambda} as 
\begin{equation*}
B_{\la,\mu} = \exp\sum_{\al \in \A}\sum_{j=0}^\infty c_{\al,j}\bigl{(}\N^j \pi_\al \la \big{|} \mu\bigr{)},
\end{equation*}
where the $c_{\al,j}$ are constants given by
\begin{equation*}
c_{\al,j}=
\frac{1}{j!}\Psi^{(j)}(\al_0')+\de_{j,0}\ga.
\end{equation*}

We have:
\begin{equation}\label{elzemz}
\begin{split}
E_\la(z_1)E_\mu(z_2) &= E_\la(z_1)_+E_\la(z_1)_-E_\mu(z_2)_+E_\mu(z_2)_-\\
& = \exp(C)E_\la(z_1)_+E_\mu(z_2)_+E_\la(z_1)_-E_\mu(z_2)_- ,
\end{split}
\end{equation}
where $C$ is the commutator
\begin{equation*}
\Bigl{[}\sum_{m \in \CC^+} \bigl{(} (m+\N)^{-1}z_1^{-m-\N}\la\bigr{)}_{(m+\N)},\sum_{n \in \CC^-} \bigl{(} (n+\N)^{-1}z_2^{-n-\N}\mu\bigr{)}_{(n+\N)} \Bigr{]}.
\end{equation*}
Then using \eqref{hbrext} and \eqref{inv}, we write $C$ as
\begin{equation}\label{commsum}
C=\sum_{\al \in \A}\sum_{j=0}^\infty(-1)^{j+1}\Bigl{(}\Bigl{(}\frac{z_2}{z_1}\Bigr{)}^\N\N^j\pi_\al \la \Big{|} \mu \Bigr{)} \sum_{m \in \al^+}\frac{1}{m^{j+1}}\Bigl{(}\frac{z_2}{z_1}\Bigr{)}^m,
\end{equation}
where $\al^+ = \al \cap \CC^+$ and the sum over $j$ is finite due to the nilpotency of $\N$. 

For each $\al \in \A$ and $j \geq 1$, the sum
\begin{equation*}
\begin{split}
\sum_{m \in \al^+}\frac{1}{m^{j+1}}\Bigl{(}\frac{z_2}{z_1}\Bigr{)}^m 
&=\sum_{m=0}^\infty \frac{1}{(m+\al_0')^{j+1}}\Bigl{(}\frac{z_2}{z_1}\Bigr{)}^{m+\al_0'} \\
&=\Bigl{(}\frac{z_2}{z_1}\Bigr{)}^{\al_0'}\Phi\Bigl{(}\frac{z_2}{z_1},j+1,\al_0' \Bigr{)}
\end{split}
\end{equation*}
converges for $|z_1| \geq |z_2|$, and from \eqref{polygamma} we obtain
\begin{equation*}
\Bigl{(}\frac{z_2}{z_1}\Bigr{)}^{\al_0'}\Phi\Bigl{(}\frac{z_2}{z_1},j+1,\al_0' \Bigr{)}\Big{|}_{z_1=z_2=z} =
\frac{(-1)^{j+1}}{j!}\Psi^{(j)}(\al_0')
= (-1)^{j+1} c_{\al,j}.
\end{equation*}
Now we consider the case when $j=0$. Then the sum is
\begin{equation*}
\sum_{m \in \al^+}\frac{1}{m}\Bigl{(}\frac{z_2}{z_1}\Bigr{)}^m = \Bigl{(}\frac{z_2}{z_1}\Bigr{)}^{\al'_0}\Phi\Bigl{(}\frac{z_2}{z_1},1,\al_0' \Bigr{)}, \qquad |z_1|\geq |z_2|, \ z_1 \neq z_2.
\end{equation*}
We cannot set $z_1=z_2$ in this expression, because both the sum \eqref{lerch} and the integral formula \eqref{lerchint} for $\Phi$ diverge.

Let $t=(\pi_\al \la | \mu)$. For $|z_1| >  |z_2|$, we have
\begin{equation*}
\begin{split}
(z_1-z_2)^{-t}
=z_1^{-t}\exp\Bigl{(}-t\ln\Bigl{(}1-\frac{z_2}{z_1}\Bigr{)}\Bigr{)}
=z_1^{-t}\exp \sum_{m=1}^\infty \frac{t}{m}\Bigl{(}\frac{z_2}{z_1}\Bigr{)}^m,
\end{split}
\end{equation*}
and
\begin{equation*}
\begin{split}
(&z_1-z_2)^{-t} \exp\Bigl{(} -\Bigl{(}\Bigl{(}\frac{z_2}{z_1}\Bigr{)}^\N\pi_\al \la \Big{|} \mu\Bigr{)} \Bigl{(}\frac{z_2}{z_1}\Bigr{)}^{\al_0'} \Phi\Big{(}\frac{z_2}{z_1},1,\al_0'\Big{)}\Bigr{)} \\
&= z_1^{-t} \exp\sum_{m=0}^\infty \Bigl{(}\frac{t}{m+1}\Bigl{(}\frac{z_2}{z_1}\Bigr{)}^{m+1} 
- \Bigl{(}\Bigl{(}\frac{z_2}{z_1}\Bigr{)}^\N\pi_\al \la \Big{|} \mu\Bigr{)}  \frac{1}{m+\al_0'}\Bigl{(}\frac{z_2}{z_1}\Bigr{)}^{m+\al_0'} \Bigr{)}.
\end{split}
\end{equation*}
The sum in the right-hand side converges for $z_1=z_2$ to
\begin{equation*}
\begin{split}
\sum_{m=0}^\infty \Bigl{(}\frac{t}{m+1} 
- \frac{t}{m+\al_0'} \Bigr{)}
=t(\Psi(\al_0')+\ga) = c_{\al,0} (\pi_\al \la | \mu).
\end{split}
\end{equation*}
Plugging these into \eqref{commsum}, we get
\begin{equation*}
\begin{split}
(z_1&-z_2)^{-(\la|\mu)}\exp(C)\big{|}_{z_1=z_2=z} \\
&= \prod_{\al \in \A}\biggl{(}z^{-(\pi_\al \la|\mu)}
\exp \bigl( c_{\al,0}(\pi_\al \la | \mu) \bigr)
\exp\sum_{j=1}^\infty c_{\al,j}\bigl{(}\N^j\pi_\al \la \big{|} \mu \bigr{)}\biggr{)}\\
&= \ z^{-(\la|\mu)}\exp \sum_{\al \in \A}\sum_{j=0}^\infty c_{\al,j}\bigl{(}\N^j \pi_\al \la \big{|} \mu\bigr{)},
\end{split}
\end{equation*}
completing the proof.
\end{proof}

\begin{corollary}
In every\/ $\ph$-twisted\/ $V_Q$-module, we have  
\begin{equation}\label{fieldprod}
\begin{split}
(z_1-z_2)^{-(\la|\mu)}&Y(e^\la,z_1)Y(e^\mu,z_2)\big{|}_{z_1=z_2=z} \\
&= B_{\la,\mu}U_\la U_\mu \theta_{\la+\mu}e^{\ze a_{\la+\mu}}z^{b_{\la+\mu}}E_{\la+\mu}(z)
\end{split}
\end{equation}
for\/ $\la,\mu \in Q$.
\end{corollary}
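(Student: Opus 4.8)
The plan is to substitute the explicit formula \eqref{twistedvop} for both twisted vertex operators into the left-hand side of \eqref{fieldprod}, reorganize the product so that the operators $U_\la,U_\mu$ become adjacent and the exponentials $E_\la(z_1),E_\mu(z_2)$ become adjacent, and then apply \prref{ecomb} and \leref{lthmult}. Writing $Y(e^\la,z_1)Y(e^\mu,z_2)=U_\la\,\th_\la(\ze_1)\,e^{\ze_1a_\la}z_1^{b_\la}\,E_\la(z_1)\,U_\mu\,\th_\mu(\ze_2)\,e^{\ze_2a_\mu}z_2^{b_\mu}\,E_\mu(z_2)$ and noting that $E_\la(z_1)$ involves only the modes $a_{(m+\N)}$ with $m\neq0$ while $\th_\mu(\ze_2)$ involves only the mode $(\cdot)_{(0+\N)}$, formulas \eqref{hbrext} and \eqref{aUcomm} show that $U_\mu,\th_\mu(\ze_2),e^{\ze_2a_\mu},z_2^{b_\mu}$ all commute past $E_\la(z_1)$ and past the scalars $e^{\ze_1a_\la}z_1^{b_\la}$. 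The only nontrivial interchange is moving $U_\mu$ past $\th_\la(\ze_1)=e^{(\ze_1\P^-\la)_{(0+\N)}}$: by \eqref{aUcomm} the commutator $[(\ze_1\P^-\la)_{(0+\N)},U_\mu]$ is the scalar $(\ze_1\P^-\la_0\,|\,\mu)\,U_\mu$, so $\th_\la(\ze_1)U_\mu=e^{(\ze_1\P^-\la_0\,|\,\mu)}\,U_\mu\,\th_\la(\ze_1)$. This gives
\[
Y(e^\la,z_1)Y(e^\mu,z_2)=e^{(\ze_1\P^-\la_0\,|\,\mu)}\,U_\la U_\mu\,\th_\la(\ze_1)\th_\mu(\ze_2)\,e^{\ze_1a_\la}z_1^{b_\la}e^{\ze_2a_\mu}z_2^{b_\mu}\,E_\la(z_1)E_\mu(z_2).
\]

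Next I would multiply by $(z_1-z_2)^{-(\la|\mu)}$, carry this scalar over to sit next to $E_\la(z_1)E_\mu(z_2)$, and set $z_1=z_2=z$, $\ze_1=\ze_2=\ze$. The only singular subexpression at $z_1=z_2$ is $(z_1-z_2)^{-(\la|\mu)}E_\la(z_1)E_\mu(z_2)$, which \prref{ecomb} replaces with $z^{-(\la|\mu)}B_{\la,\mu}E_{\la+\mu}(z)$; the other scalar prefactors are regular there and may be evaluated directly. Then \leref{lthmult} replaces $\th_\la(\ze)\th_\mu(\ze)$ with $e^{(\ze\P\la_0\,|\,\mu)}\th_{\la+\mu}(\ze)$. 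Moving the scalars $B_{\la,\mu},e^{\ze a_{\la+\mu}},z^{b_{\la+\mu}}$ to the front on both sides, the claimed identity \eqref{fieldprod} becomes equivalent to the scalar equality
\[
(\ze\P^-\la_0\,|\,\mu)+(\ze\P\la_0\,|\,\mu)+\ze(a_\la+a_\mu)+\ze\bigl(b_\la+b_\mu-(\la|\mu)\bigr)=\ze\,a_{\la+\mu}+\ze\,b_{\la+\mu},
\]
where every $z^c$ has been rewritten as $e^{c\ze}$.

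Verifying this last identity is where the actual computation lies, though it is elementary. Cancelling $\ze$ and using $b_{\la+\mu}=b_\la+b_\mu+(\la_0|\mu_0)-(\la|\mu)$ from \eqref{bla}, it reduces to $(\P^-\la_0\,|\,\mu)+(\P\la_0\,|\,\mu)=a_{\la+\mu}-a_\la-a_\mu+(\la_0|\mu_0)$. Expanding $a_{\la+\mu}$ from its definition and using $\P^-\la_0,\P\la_0\in\lieh_0$ gives $a_{\la+\mu}-a_\la-a_\mu=\tfrac12\bigl((\P^-\la_0\,|\,\mu_0)+(\P^-\mu_0\,|\,\la_0)\bigr)-(\la_0|\mu_0)$, so the right-hand side equals $\tfrac12\bigl((\P^-\la_0\,|\,\mu_0)+(\P^-\mu_0\,|\,\la_0)\bigr)$; by the adjointness $(\P^+x\,|\,y)=(x\,|\,\P^-y)$ this equals $\tfrac12\bigl((\P^++\P^-)\la_0\,|\,\mu_0\bigr)$, and the left-hand side equals $((\P^-+\P)\la_0\,|\,\mu_0)=\tfrac12\bigl((\P^++\P^-)\la_0\,|\,\mu_0\bigr)$ since $\P=(\P^+-\P^-)/2$. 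I expect the main obstacle to be bookkeeping: keeping straight the direction and the $\ze$-dependence of the several exponential scalar factors produced by the commutations, in particular the factor coming from \eqref{aUcomm}, and checking that the mismatch in the powers of $z$ is exactly absorbed by the identification $z^c=e^{c\ze}$. The genuinely delicate analysis — the appearance of the digamma and polygamma functions — has already been packaged into \prref{ecomb} and is used here only as a black box.
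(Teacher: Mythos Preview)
Your proof is correct and follows essentially the same route as the paper's own argument: the paper computes $a_{\la+\mu}$ and $b_{\la+\mu}$, records the commutation $\th_\la U_\mu=e^{(\ze\P^-\la_0|\mu)}U_\mu\th_\la$ (your ``only nontrivial interchange''), and then invokes \eqref{thmult} and \prref{ecomb} exactly as you do. Your write-up is simply a more detailed unpacking of the same steps; the scalar bookkeeping you carry out is precisely the verification the paper leaves implicit in its formulas for $a_{\la+\mu}$ and $b_{\la+\mu}$.
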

\begin{proof}
First, we compute 
\begin{equation*}
\begin{split}
a_{\la + \mu} &= a_\la+a_\mu + \Bigl{(}\frac{\P^++\P^-}{2}\la_0 \Big{|} \mu\Bigr{)}-(\la_0|\mu), \\
b_{\la+\mu} &= b_\la + b_\mu + (\la_0|\mu) - (\la|\mu).
\end{split}
\end{equation*}
Using \eqref{aUcomm} we obtain
\begin{equation}\label{thU}
\th_\la U_\mu = e^{(\ze\P^-\la_0|\mu)}U_\mu\th_\la.
\end{equation}
Combining the above equations, \eqref{thmult}, and \prref{ecomb}, we obtain the desired result.
\end{proof}

\subsection{Products of Operators $U_\la$}

 We recall that $\lieh_0$ is the subset of elements of $\lieh$ that are fixed by $\si$. The operator $(1-\si)$ is invertible on $(\lieh_0)^\perp$, 
and every $\la\in\lieh$ can be written uniquely in the form
\begin{equation}\label{lasum}
\la = \la_0 + (1-\si)\la_*, \qquad \la_0=\pi_0\la\in\lieh_0, \;\; \la_* \in (\lieh_0)^\perp.
\end{equation}

\begin{theorem}\label{commutant}
For every\/ $\ph$-twisted\/ $V_Q$-module, we have  
\begin{equation}\label{Uprod}
U_\la U_\mu = \ep(\la,\mu)B^{-1}_{\la,\mu}U_{\la+\mu} = C_{\la,\mu}U_\mu U_\la,
\end{equation}
where\/ $B_{\la,\mu}$ is given by \eqref{Blambda} and
\begin{equation}\label{cla}
\begin{split}
C_{\la,\mu} = (-1)^{|\la|^2|\mu|^2}e^{\pi \ii (\la_0|\mu)}e^{2 \pi \ii (\frac{1-\si}{1-\ph}\la_* | \mu)}e^{(\frac{1}{\N}(\pi \ii \N \frac{1+\ph}{1-\ph}-1)\la_0|\mu)}.
\end{split}
\end{equation}
\end{theorem}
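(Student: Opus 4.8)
The plan is to extract the relations among the operators $U_\la$ from the locality of the vertex operators $Y(e^\la,z)$, exactly as in the classical (untwisted or finite-order) setting of \cite{BK}, but now carrying along the extra factors $\th_\la$, $e^{\ze a_\la}$, $z^{b_\la}$ and the special-function corrections computed in \prref{ecomb}. First I would derive the first equality in \eqref{Uprod}. Starting from \eqref{VAexpmult}, the $(-1-(\la|\mu))$-th product of $e^\la$ with $e^\mu$ equals $\ep(\la,\mu)e^{\la+\mu}$; translated via the $n$-th product identity \eqref{lognth} and the definition \eqref{TLMnthprod} of the twisted $n$-th product, this says that the appropriately differentiated and diagonally-restricted product $(z_1-z_2)^{-(\la|\mu)}Y(e^\la,z_1)Y(e^\mu,z_2)$ (here $N=\max(0,-(\la|\mu))$, and one checks the $D_{z_1}$ derivative drops out for the relevant mode) equals $\ep(\la,\mu)Y(e^{\la+\mu},z)$ up to the standard bookkeeping. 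Plugging in \eqref{fieldprod} on the left and \eqref{twistedvop} (applied to $\la+\mu$) on the right, the $\th$, $e^{\ze a}$, $z^b$ and $E$ factors match by construction, and comparing the remaining operator parts yields $B_{\la,\mu}U_\la U_\mu = \ep(\la,\mu)U_{\la+\mu}$, i.e.\ $U_\la U_\mu = \ep(\la,\mu)B_{\la,\mu}^{-1}U_{\la+\mu}$.

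For the second equality — the commutation relation $U_\la U_\mu = C_{\la,\mu}U_\mu U_\la$ — I would use the locality identity \eqref{voploc}: $(z_1-z_2)^N Y(e^\la,z_1)Y(e^\mu,z_2) = (-1)^{|\la|^2|\mu|^2}(z_1-z_2)^N Y(e^\mu,z_2)Y(e^\la,z_1)$ with $N$ large. Applying \eqref{fieldprod} to both sides (swapping $\la\leftrightarrow\mu$ and $z_1\leftrightarrow z_2$ on the right) and dividing out the common $z^{b_{\la+\mu}}E_{\la+\mu}(z)$ factor, one gets
\begin{equation*}
B_{\la,\mu}U_\la U_\mu\,\th_{\la+\mu} = (-1)^{|\la|^2|\mu|^2}B_{\mu,\la}'\,U_\mu U_\la\,\th_{\mu+\la},
\end{equation*}
where the primed $B$ comes from expanding $(z_1-z_2)^{-(\la|\mu)}$ in the opposite domain $|z_2|>|z_1|$. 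Since $\th_{\la+\mu}=\th_{\mu+\la}$ is invertible (\leref{lthmult}), this reduces to $U_\la U_\mu = C_{\la,\mu}U_\mu U_\la$ with $C_{\la,\mu}=(-1)^{|\la|^2|\mu|^2}B_{\la,\mu}^{-1}B_{\mu,\la}'$. The task is then to evaluate this scalar. The sign $(-1)^{|\la|^2|\mu|^2}$ is already present; the factor $e^{\pi\ii(\la_0|\mu)}$ should come from the $\ZZ$-graded (integer-mode, $\la_0$) part where $z^{b}$-type ambiguities and $\eta$-cocycle data contribute, as in \cite{BK}; and the genuinely new factors $e^{2\pi\ii(\frac{1-\si}{1-\ph}\la_*|\mu)}$ and $e^{(\frac1\N(\pi\ii\N\frac{1+\ph}{1-\ph}-1)\la_0|\mu)}$ must emerge from the reflection formulas \eqref{lerchref}, \eqref{digref}, \eqref{polyref} relating $\Phi(z_2/z_1,\cdot,\al_0')$ to $\Phi(z_1/z_2,\cdot,\cdot)$ — concretely, from the difference $\Psi(\mathcal{S}'+\N)-\big(\text{its reflection}\big)$ summed against $(\N^j\pi_\al\la\,|\,\mu)$.

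Concretely, for the $C_{\la,\mu}$ computation I would split $B_{\la,\mu}^{-1}B_{\mu,\la}'$ according to the eigenspace decomposition $\la=\sum_\al\pi_\al\la$. On the $\al=\ZZ$ block one has $\al_0'=1$, $\al_0=0$, and the relevant sums are governed by the polygamma values at $1$ and the zeta-function generating identity \eqref{zegen}; using $\frac{1+\ph}{1-\ph}$ acting through $\si=1,\N$ on $\lieh_0$ one recovers the last exponential and the $\pi\ii(\la_0|\mu)$ term. On the blocks $\al\neq\ZZ$ (so $\al_0'\notin\ZZ$), the reflection formula \eqref{polyref} converts $\sum_j\frac1{j!}\Psi^{(j)}(\al_0')\N^j$ minus its $\al_0\mapsto(-\al)_0$ counterpart into a $2\pi\ii$-multiple of a $\Li_{-j}(e^{-2\pi\ii\al_0})$ series, which resums to $2\pi\ii\,\frac{1}{1-\ph}$-type operator (since $e^{-2\pi\ii\mathcal{S}'}$ essentially gives $\si$ and $e^{-2\pi\ii\N}$ completes it to $\ph$), producing $e^{2\pi\ii(\frac{1-\si}{1-\ph}\la_*|\mu)}$ after rewriting $\la$ in terms of $\la_*$ via \eqref{lasum}. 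The main obstacle I anticipate is precisely this last bookkeeping: correctly matching the branch choices ($-\pi<\arg z<\pi$, the $\sgn\omega(z)$ term in \eqref{lerchref}) between the two expansion domains and tracking how the operator-valued arguments $\mathcal{S}'+\N$, $\mathcal{S}+\N$ interact with the $(1-\si)/(1-\ph)$ and $(1+\ph)/(1-\ph)$ rational functions of $\ph$ — in particular verifying that $\frac1\N(\pi\ii\N\frac{1+\ph}{1-\ph}-1)$ is well-defined on $\lieh_0$ (where $\si=1$ so $1-\ph=1-e^{-2\pi\ii\N}$ has a zero of the same order as $\N$, making the quotient regular). Everything else is a careful but routine application of the identities in Section 4.1 together with \prref{ecomb} and \prref{umodes}.
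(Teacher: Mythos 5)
Your derivation of the first equality in \eqref{Uprod} is exactly the paper's: compute the $(-1-(\la|\mu))$-th product of $Y(e^\la,z)$ and $Y(e^\mu,z)$ via \eqref{TLMnthprod} and \eqref{fieldprod}, compare with $\ep(\la,\mu)Y(e^{\la+\mu},z)$ coming from \eqref{VAexpmult} and \eqref{twistedvop}, and cancel the invertible factor $\th_{\la+\mu}e^{\ze a_{\la+\mu}}z^{b_{\la+\mu}}E_{\la+\mu}(z)$. No issues there.

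For the second equality you take a genuinely different and, as sketched, incompletely justified route, and you miss the point that makes the paper's argument short: once the first equality is known, \emph{no locality argument is needed}. Both $U_\la U_\mu$ and $U_\mu U_\la$ are scalar multiples of the \emph{same} operator $U_{\la+\mu}$, so immediately
\[
C_{\la,\mu}\;=\;\frac{\ep(\la,\mu)}{\ep(\mu,\la)}\cdot\frac{B_{\mu,\la}}{B_{\la,\mu}}\;=\;(-1)^{(\la|\mu)+|\la|^2|\mu|^2}\,\frac{B_{\mu,\la}}{B_{\la,\mu}}
\]
by \eqref{epprod}, and the whole problem reduces to antisymmetrizing the exponent of \eqref{Blambda}, i.e.\ evaluating $\bigl((\Psi(\mathcal{S}'+\N)+\ga)\la\big|\mu\bigr)-\bigl((\Psi(\mathcal{S}'+\N)+\ga)\mu\big|\la\bigr)$ with the reflection formula \eqref{polyref} on the blocks $\al\neq\ZZ$ and the zeta-value identity \eqref{zegen} on $\lieh_0$ --- which is precisely the computation you outline in your final paragraph, and that part of your sketch is correct. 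By contrast, your proposed extraction of the commutation relation from locality \eqref{voploc} is not justified as stated: formal locality is an identity only after multiplication by the \emph{nonnegative} power $(z_1-z_2)^N$, and concluding that the two orderings, expanded in the opposite domains $|z_1|>|z_2|$ and $|z_2|>|z_1|$ and multiplied by the negative power $(z_1-z_2)^{-(\la|\mu)}$, restrict to the same operator on the diagonal requires showing that both are expansions of one holomorphic function and matching the branch ($\sgn\omega$) terms of \eqref{lerchref}. That is exactly the analytic-continuation argument the paper carries out later, in the proof of \thref{main}, \emph{using} the already-established formula \eqref{cla}. Your route is not circular (locality is a hypothesis in \thref{commutant}), but it front-loads all of that analysis for no gain; the "dividing out" step you describe is the hard part, not bookkeeping.
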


\begin{proof}
Let $N\ge\max(0,-(\la|\mu))$ be such that the locality \eqref{voploc} holds for the logarithmic vertex operators $Y(e^\la,z)$ and $Y(e^\mu,z)$. 
We apply \eqref{TLMnthprod} for $Y(e^\la,z)$ and $Y(e^\mu,z)$ with $n=-1-(\la|\mu)$. Then from \eqref{fieldprod} we obtain
\begin{equation}\label{VOPprod1}
\begin{split}
Y(e^\la_{(-1-(\la|\mu))}e^\mu,z) &= (z_1-z_2)^{-(\la|\mu)}Y(e^\la,z_1)Y(e^\mu,z_2) \big{|}_{z_1=z_2=z} \\
&=B_{\la,\mu}U_\la U_\mu \th_{\la+\mu}e^{\ze a_{\la+\mu}}z^{b_{\la+\mu}}E_{\la+\mu}(z).
\end{split}
\end{equation}
On the other hand, from \eqref{VAexpmult} we have 
\begin{equation}\label{VOPprod2}
\begin{split}
Y(e^\la_{(-1-(\la|\mu))}e^\mu,z)  &= \ep(\la,\mu)Y(e^{\la+\mu},z) \\
&=\ep(\la,\mu)U_{\la+\mu}\th_{\la+\mu}e^{\ze a_{\la+\mu}}z^{b_{\la+\mu}}E_{\la+\mu}(z).
\end{split}
\end{equation}
Comparing \eqref{VOPprod1} and \eqref{VOPprod2}, we get the first equality of \eqref{Uprod}.

To prove \eqref{cla}, first observe that by \eqref{al0cases} and \eqref{alprime},
\begin{equation*}
(-\al)_0' = -\al_0'+1, \qquad \al \in \A \setminus \{\ZZ\}.
\end{equation*}
Then from \eqref{inv}, \eqref{bilproj}, we get
\begin{equation*}
((1-\si)\la_*|\mu) = ((1-\si)\la_*|(1-\si)\mu_*) = (\la|(1-\si)\mu_*)
\end{equation*}
and
\begin{equation}\label{al0pr}
\bigl( (1-\mathcal{S}'-\N)(1-\si)\la_* \big| \mu \bigr) = \bigl( \la \big| (\mathcal{S}'+\N)(1-\si)\mu_* \bigr).
\end{equation}
This implies
\begin{equation*}
\bigl{(}\Psi(\mathcal{S}'+\N)\mu \big{|}(1-\si)\la_* \bigr{)} = \bigl{(}\Psi(-\mathcal{S}'-\N+1)(1-\si)\la_* \big{|} \mu  \bigr{)}.
\end{equation*}
From the reflection formulas \eqref{polyref}, we obtain
\begin{equation*}
\begin{split}
(\Psi(-\mathcal{S}'&-\N+1)-\Psi(\mathcal{S}'+\N))(1-\si)\la_*\\
&=e^{\N \partial_z}(\Psi(-z)-\Psi(z+1))\big{|}_{z=\mathcal{S}'-1}(1-\si)\la_*\\
&=e^{\N \partial_z}(\pi \cot(\pi z))\big{|}_{z=\mathcal{S}'-1}(1-\si)\la_*\\
&=\pi \cot(\pi (\mathcal{S}'+\N))(1-\si)\la_*.
\end{split}
\end{equation*}
Using that 
\begin{equation*}
\cot(z) = \ii \frac{e^{\ii z}+e^{-\ii z}}{e^{\ii z}-e^{-\ii z}}=\ii\frac{1+e^{-2\ii z}}{1-e^{-2\ii z}} 
=\frac{2\ii}{1-e^{-2\ii z}} - \ii
\end{equation*}
and
\begin{equation*}
\ph = e^{-2\pi\ii (\mathcal{S}+\N)} = e^{-2\pi\ii (\mathcal{S}'+\N)},
\end{equation*}
we find 
\begin{equation*}
\begin{split}
\bigl{(}\pi \cot(\pi (\mathcal{S}'+\N)) (1-\si)\la_* \big{|} \mu \bigr{)}
&= 2\pi \ii \Bigl{(}\frac{1-\si}{1-\ph}\la_*\Big{|}\mu \Bigr{)} -\pi\ii ((1-\si)\la_*|\mu)\\
&= 2\pi \ii \Bigl{(}\frac{1-\si}{1-\ph}\la_*\Big{|}\mu \Bigr{)}-\pi\ii (\la-\la_0|\mu).
\end{split}
\end{equation*}

Next we note that $\mathcal{S}'$ acts as the identity on $\lieh_0$. Thus by \eqref{polyzeta}, \eqref{zegen}, \eqref{seriesexp}, we have
\begin{equation*}
\begin{split}
\bigl((\Psi(\mathcal{S}'+\N)&+\ga)\mu\big|\la_0\bigr)-\bigl((\Psi(\mathcal{S}'+\N)+\ga)\la_0\big|\mu\bigr) \\
&=\bigl((\Psi(1-\N)-\Psi(1+\N))\la_0\big|\mu\bigr)\\
&= \sum_{j=0}^\infty  \Bigl{(}(-1)^j\frac{\Psi^{(j)}(1)}{j!}- \frac{\Psi^{(j)}(1)}{j!}\Bigr{)} \bigl{(}\N^j\la_0 \big{|} \mu\bigr{)}\\
&= -2 \sum_{j=0}^\infty \ze(2 j+2)\bigl{(} \N^{2j+1}\la_0 \big{|} \mu \bigr{)}\\
&= \Bigl{(} \frac{1}{\N} (\pi \N \cot(\pi \N)-1) \la_0 \Big{|}\mu\Bigr{)}\\
&= \Bigl{(}\frac{1}{\N} \Bigl{(}\pi \ii \N \frac{1+\ph}{1-\ph}-1\Bigr{)} \la_0 \Big{|}\mu\Bigr{)}.
\end{split}
\end{equation*}
Therefore
\begin{equation*}
\begin{split}
(-1&)^{|\la|^2|\mu|^2} C_{\la,\mu} = (-1)^{(\la|\mu)}\frac{B_{\mu,\la}}{B_{\la,\mu}}\\
&=
 (-1)^{(\la|\mu)} \exp\Bigl{(}\bigl{(} (\Psi(\mathcal{S}'+\N)+\ga)\la\big{|} \mu \bigr{)} -\bigl{(} (\Psi(\mathcal{S}'+\N)+\ga)\mu\big{|} \la \bigr{)}  \Bigr{)}\\
&=(-1)^{(\la|\mu)}e^{2\pi \ii (\frac{1-\si}{1-\ph}\la_* |\mu )} e^{-\pi\ii (\la-\la_0|\mu)}e^{(\frac{1}{\N}(\pi\ii\N\frac{1+\ph}{1-\ph}-1)\la_0|\mu)}\\
&=e^{\pi \ii (\la_0|\mu)}e^{2\pi \ii (\frac{1-\si}{1-\ph}\la_* |\mu )}e^{(\frac{1}{\N}(\pi\ii\N\frac{1+\ph}{1-\ph}-1)\la_0|\mu)}.
\end{split}
\end{equation*}
This completes the proof of the theorem.
\end{proof}

\begin{remark}\label{rphss2}
When the automorphism $\ph$ is semisimple, we have $\ph=\si$ and $\N=0$. Then 
\begin{equation*}
\frac{1}{\N} \Bigl{(}\pi \ii \N \frac{1+\ph}{1-\ph}-1\Bigr{)} \la_0 
= -2 \sum_{j=0}^\infty \ze(2 j+2) \N^{2j+1}\la_0 = 0.
\end{equation*}
In this case, \eqref{cla} reduces to \cite[Eq.\ (4.44)]{BK}. 
When $\ph=\si$ has finite order, our $B_{\la,\mu}$ coincide with those given by \cite[Eq.\ (4.37)]{BK}. 
\end{remark}

\section{Reduction to Group Theory}\label{srtgt}

As before, let $Q$ be an integral lattice with an automorphism $\ph$.
Building on the results of the previous section, here we will introduce a group $G$ that acts on
every $\ph$-twisted $V_Q$-module.
Conversely, every module over the $\ph$-twisted Heisenberg algebra equipped with an action of $G$ satisfying suitable $\ph$-equivariance and compatibility conditions
can be extended to a $\ph$-twisted $V_Q$-module.

\subsection{The Group $\hat G$}\label{subtgg}

Recall that $\lieh=\CC\otimes_\ZZ Q$ and we write $\ph|_\lieh=\si e^{-2 \pi\ii\N}$ (see \eqref{phsplit}).
We let $\lieh_0$ be the subspace of vectors in $\lieh$ fixed by $\si$, and define $B_{\la,\mu}$ by \eqref{Blambda}.

\begin{definition}\label{defG}
Let $\hat G = \CC^\times \times Q \times \exp(\lieh_0)$
 be the set of elements of the form $c \,U_\la e^{h}$ ($c \in \CC^\times$, $\la \in Q$, $h \in \lieh_0$),
with the multiplication determined by
\begin{equation}\label{multinG}
\begin{split}
e^{h} e^{h'} &= e^{\frac{1}{2}(\N h|h')}e^{h+ h'},\\
e^{h} U_\la e^{-h} &= e^{(h | \la)}U_\la,\\
U_\la U_\mu &=\ep(\la,\mu)B_{\la,\mu}^{-1}U_{\la+\mu}.
\end{split}
\end{equation}
\end{definition}

More explicitly, using \eqref{inv}, we can write 
\begin{equation}\label{Gmultip}
(c U_\la e^{h})(c' U_\mu e^{h'}) = c c' e^{(h|\mu-\frac{1}{2}\N h')}\ep(\la,\mu)B_{\la,\mu}^{-1}U_{\la+\mu}e^{h+h'}.
\end{equation}

\begin{proposition}
The set\/ $\hat G$ with the multiplication defined above is a group with identity element\/
$\mathbbm{1}= 1  U_0 e^0.$
\end{proposition}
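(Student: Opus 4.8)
The plan is to verify directly that the multiplication in \eqref{Gmultip} is associative, that $\mathbbm{1}=1\cdot U_0 e^0$ is a two-sided identity, and that every element has a two-sided inverse. I would organize the proof around the observation that $\hat G$ is built as an iterated extension: the subgroup $\exp(\lieh_0)$ (with the cocycle $e^{\frac12(\N h|h')}$) is the discrete Heisenberg piece, the $\CC^\times$ factor is central, and $Q$ sits on top via the $U_\la$. So one natural approach is to check the group axioms factor by factor, but it is probably cleanest just to compute with the explicit product formula \eqref{Gmultip}.

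First I would record that $\mathbbm{1}$ is the identity: both $(1U_0e^0)(cU_\la e^h)$ and $(cU_\la e^h)(1U_0e^0)$ equal $cU_\la e^h$, using $\ep(0,\la)=\ep(\la,0)=1$, $B_{0,\la}=B_{\la,0}=\Id$ (immediate from \eqref{Blambda} since $\mathcal{S}'$ and $\N$ kill nothing relevant when one argument is $0$), and $(0|\la)=(h|0)=0$. Next I would produce the inverse: given $cU_\la e^h$, one checks that its inverse is $c^{-1}e^{-(h|\la)+\text{(correction)}}\ep(\la,-\la)^{-1}B_{\la,-\la}U_{-\la}e^{-h}$, where the scalar correction is chosen to absorb the cocycle terms $e^{\frac12(\N h|h)}$, $B_{-\la,\la}$, etc.; since all these correction factors lie in the central $\CC^\times$, the inverse exists and is unique. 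The only mild care needed is to confirm $\la\mapsto-\la$ and $h\mapsto -h$ is consistent, i.e. that left and right inverses coincide, which follows once associativity is in hand.

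The main work, and the step I expect to be the real obstacle, is \textbf{associativity}. One must show
\[
\bigl((c_1U_{\la_1}e^{h_1})(c_2U_{\la_2}e^{h_2})\bigr)(c_3U_{\la_3}e^{h_3})
= (c_1U_{\la_1}e^{h_1})\bigl((c_2U_{\la_2}e^{h_2})(c_3U_{\la_3}e^{h_3})\bigr).
\]
Expanding both sides with \eqref{Gmultip}, the $U$-part and the $e^{h}$-part match automatically (the $\la_i$ add up, the $h_i$ add up), so the identity reduces to an equality of scalars in $\CC^\times$. Collecting terms, this amounts to three separate cocycle conditions: (i) the $2$-cocycle identity for $\ep$, which holds because $\ep$ is bimultiplicative and hence a cocycle (this is standard and was recalled around \eqref{expmult}); (ii) the $2$-cocycle identity for $h\mapsto e^{\frac12(\N h|h')}$ on $\lieh_0$, which is immediate from bilinearity of $(\N\,\cdot\,|\,\cdot\,)$ — in fact $\frac12(\N h|h')$ is bilinear, so it is trivially a $2$-cocycle; and (iii) a cross-term identity saying that $B_{\la,\mu}$ together with the pairing $e^{(h|\la)}$ assemble into a compatible $2$-cocycle, i.e.
\[
B_{\la_1,\la_2}^{-1}B_{\la_1+\la_2,\la_3}^{-1}
= B_{\la_2,\la_3}^{-1}B_{\la_1,\la_2+\la_3}^{-1}\cdot(\text{terms from moving }e^{h}\text{ past }U_\la).
\]
Because $B_{\la,\mu}=\exp\bigl((\Psi(\mathcal{S}'+\N)+\ga)\la\,\big|\,\mu\bigr)$ is the exponential of a function \emph{bilinear} in $(\la,\mu)$, the combination $B_{\la_1,\la_2}B_{\la_1+\la_2,\la_3}B_{\la_1,\la_2+\la_3}^{-1}B_{\la_2,\la_3}^{-1}$ collapses to $1$ by bilinearity — a bilinear form on $Q$ is automatically a $2$-cocycle. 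The $e^{(h|\la)}$ factors likewise reassemble by bilinearity of $(\,\cdot\,|\,\cdot\,)$. So the heart of the argument is just bookkeeping: carefully track which $h_i$ gets paired with which $\la_j$ (and which $\N h_i$), confirm the shifts match on both sides of the associativity equation, and invoke bilinearity of each relevant form. I would present this as: reduce to a scalar identity, then split that identity into the three bilinear/bimultiplicative cocycle checks, each of which is automatic.

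In summary, the proof is short: the identity and inverses are checked by direct substitution using $\ep(0,\mu)=1$ and $B_{0,\mu}=\Id$, while associativity reduces to the fact that a sum of a bimultiplicative map (for $\ep$) and exponentials of bilinear maps (for $B_{\la,\mu}$ and for the Heisenberg cocycle $\frac12(\N h|h')$ and the pairing $(h|\la)$) is automatically associative. No hard analysis is needed here; the special-function content is entirely hidden inside the already-established bilinearity of $(\Psi(\mathcal{S}'+\N)+\ga)\,\cdot\,\big|\,\cdot\,$.
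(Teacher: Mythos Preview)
Your proposal is correct and follows essentially the same approach as the paper: both reduce associativity to the fact that $\ep$ is a $2$-cocycle and $B_{\la,\mu}$ is bimultiplicative (being the exponential of a bilinear form), with the remaining Heisenberg and cross-terms handled by bilinearity of $(\N\cdot|\cdot)$ and $(\cdot|\cdot)$. The paper is terser---it checks only the $U_\la U_\mu U_\nu$ case explicitly and declares the rest ``straightforward using \eqref{multinG}''---while you spell out the identity and the three cocycle checks more fully, but the substance is the same.
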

\begin{proof}
It is easy to check from \eqref{Gmultip} that $c U_\la e^h$ is invertible with
\begin{equation*}
(c U_\la e^{h})^{-1} =c^{-1} \ep(\la,\la)B_{\la,\la}^{-1}e^{(h|\la)}U_{-\la}e^{-h}.
\end{equation*}
To prove the associativity in $\hat G$, note that the map $(\la,\mu) \mapsto B_{\la,\mu}$ is bimultiplicative by \eqref{Blambda}.
Since $\ep$ is a 2-cocycle, the map $(\la,\mu)\mapsto \ep(\la,\mu)B_{\la,\mu}^{-1}$ is also a 2-cocycle, 
which implies the associativity of $U_\la$, $U_\mu$ and $U_\nu$. The general case is then straightforward using \eqref{multinG}.
\end{proof}

Recall the $\ph$-twisted Heisenberg algebra $\hhp$ from \deref{def:twaff}.
We introduce an action of $\hat G$ on $\hhp$ by conjugation:
\begin{equation}\label{Gcong}
\begin{split}
(c U_\la e^{h}) & (h'_{(m+\N)}+c'K)(c U_\la e^{h})^{-1} \\
&=h'_{(m+\N)}+\de_{m,0}\bigl{(} h'\big{|}\N h - \la\bigr{)}K + c'K.
\end{split}
\end{equation}
This action is compatible with \eqref{aUcomm} and the adjoint action of $\lieh_0$ on $\hhp$.
We say that an $\hhp$-module is an $(\hhp,\hat G)$-module if it is also a $\hat G$-module on which \eqref{Gcong} holds.
Such a module will be called \emph{restricted} if it is restricted as an $\hhp$-module (see \seref{ssbosons}).
We say that an $(\hhp,\hat G)$-module is of \emph{level $1$} if both $K \in \hhp$ and $\mathbbm{1} \in \hat G$ act as the identity operator.

\subsection{The Subgroups $G$ and $N_\ph$}

We note that, as defined, the elements of the group $\hat G$ do not necessarily satisfy the condition \eqref{phieq} induced by $\ph$-equivariance. We will amend this issue by considering a quotient group in which $\eqref{phieq}$ holds. Consider
the elements
\begin{equation}\label{glambda}
g_\la = \eta(\la) e^{2\pi\ii c_\la}U_{\ph\la}^{-1}U_\la\tau_{\la} \in \hat G \qquad (\la \in Q),
\end{equation}
where we use the notation from \seref{subtvo}. 

In particular, recall that
\begin{equation}\label{tauh}
\tau_h = \th_h(2\pi\ii) = \exp \Bigl(\frac{1-e^{-2\pi\ii\N}}\N h\Bigr)_{(0+\N)} = \exp\Bigl(\frac{1-\ph}\N h\Bigr)_{(0+\N)}
\end{equation}
for $h\in\lieh$ (see \eqref{phsplit}, \eqref{modeproj}, \eqref{thetah}).
Notice that $\tau_h=\tau_{h_0}$ only depends on $h_0=\pi_0 h$.
By \eqref{thmult} and \eqref{thU}, we have:
\begin{equation}\label{multinG2}
\begin{split}
\tau_{h} \tau_{h'} &= \exp\Bigl(\frac{\ph+\ph^{-1}-2}{2\N} h_0 \Big| h' \Bigr) \tau_{h+ h'},\\
\tau_{h} U_\la \tau_{h}^{-1} &= \exp\Bigl(\frac{1-\ph}{\N} h_0 \Big| \la\Bigr) U_\la, \\
U_\la U_\mu &=\ep(\la,\mu)B_{\la,\mu}^{-1}U_{\la+\mu},
\end{split}
\end{equation}
and $\tau_{h}^{-1} = \tau_{-h}$. 

\begin{definition}\label{defG2}
Let $G$ be the subgroup of $\hat G$ consisting of all elements of the form
$c \,U_\la \tau_\mu$ ($c \in \CC^\times$, $\la,\mu \in Q$).
\end{definition}

We observe that $G$ contains as normal subgroups central extensions by $\CC^\times$ of the abelian additive groups $Q$ and $\pi_0 Q$.

\begin{remark}\label{heisgrp}
The subgroup $\CC^\times \times \exp(\lieh_0)$ of $\hat G$ is a Lie group with a Lie algebra $\hat\lieh_\ph^0$,
which is isomorphic to $\lieh_0\oplus\CC K$ with the Lie bracket $[h,h']=(\N h|h') K$ (see \eqref{hbrext}). This Lie algebra is a direct sum of abelian and finite-dimensional Heisenberg algebras. Similarly, the subgroup of $G$ generated by $\tau_\mu$ ($\mu\in Q$) is a direct product of an abelian group and a discrete Heisenberg group.
\end{remark}

\begin{proposition}\label{pnph}
The set\/ $N_\ph = \{ g_\la \,|\, \la \in Q \}$ is a central subgroup of\/ $G$. 
The action \eqref{Gcong} of\/ $N_\ph$ on\/ $\hhp$ is trivial.
\end{proposition}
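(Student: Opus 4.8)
The plan is to verify the two claims separately. For the statement that $N_\ph$ is a central subgroup, I would first check that it is closed under multiplication and inverses by direct computation in $G$, using the multiplication rules \eqref{multinG2} and the bimultiplicativity of $B_{\la,\mu}$ and $\ep$. The element $g_\la=\eta(\la)e^{2\pi\ii c_\la}U_{\ph\la}^{-1}U_\la\tau_\la$ should satisfy $g_\la g_\mu = g_{\la+\mu}$ up to the scalars absorbed by $\eta$ and $c$; since everything in sight is (bi)multiplicative in $\la$, this reduces to bookkeeping with the cocycle $\ep(\la,\mu)B_{\la,\mu}^{-1}$ and the commutation factors coming from moving $\tau_\la$ past $U_\mu$ and $U_{\ph\mu}^{-1}$. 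This is essentially a repackaging of \thref{commutant}, specifically the relation \eqref{phieq} that was derived from $\ph$-equivariance.

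For centrality, the key point is that $N_\ph$ was constructed precisely so that quotienting by it enforces \eqref{phieq}, i.e.\ the operators $U_\la$ in any $\ph$-twisted module satisfy $g_\la=\mathbbm{1}$. So centrality in $G$ must be proved abstractly from the defining relations. I would check that $g_\la$ commutes with the generators $c$, $U_\nu$, and $\tau_\nu$ of $G$. Commuting with scalars is trivial. For $U_\nu$ and $\tau_\nu$, I would compute the conjugation $U_\nu g_\la U_\nu^{-1}$ and $\tau_\nu g_\la \tau_\nu^{-1}$ using \eqref{multinG2}; the exponential factors that appear (involving $B$, $\ep$, and the bilinear-form terms $(\frac{1-\ph}\N\,\cdot\,|\,\cdot)$) should cancel against each other. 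Here the symmetry/skew-symmetry identities for $\P$, $\N$, and the reflection-formula computation already carried out in the proof of \thref{commutant} — in particular the expression \eqref{cla} for $C_{\la,\mu}$ together with the explicit form of $c_\la$ — are exactly what makes the cancellation work. I expect this to be the main obstacle: assembling the right combination of scalar factors so that $U_\nu g_\la = g_\la U_\nu$ on the nose, rather than up to a scalar.

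For the last sentence, that the conjugation action \eqref{Gcong} of $N_\ph$ on $\hhp$ is trivial, I would read off from \eqref{Gcong} how $c\,U_\la e^h$ acts: it shifts $h'_{(0+\N)}$ by $(h'|\N h - \la)K$. Writing $g_\la$ in the form $c\,U_{\la-\ph\la}e^{h}$ for the appropriate scalar $c$ and $h\in\lieh_0$ coming from $\tau_\la$ and from combining $U_{\ph\la}^{-1}U_\la$ via \eqref{multinG2}, the induced shift on $h'_{(0+\N)}$ is $(h'|\N h-(\la-\ph\la))K$. Since $\la-\ph\la=(1-\si)\la + (\si - \ph)\la = (1-\si)\la_* + \cdots$ lies in $(\lieh_0)^\perp$ up to the $\N$-part, and $\N h$ with $h$ the explicit $\lieh_0$-vector $\tfrac{1-\ph}\N\la$ contributes $(1-\ph)\la_0$, I expect the two contributions to cancel when paired against any $h'\in\lieh$, giving a zero shift. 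This is a short computation once $g_\la$ is put in the standard form $c\,U_\nu e^h$; it does not present a real difficulty beyond identifying $\nu$ and $h$ correctly.
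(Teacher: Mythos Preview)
Your proposal is correct and follows essentially the same approach as the paper: write $g_\la$ as a scalar times $U_{(1-\ph)\la}\tau_\la$, then verify centrality by conjugating with generators and checking that the scalar factors cancel (the paper computes $C_{\mu,(1-\ph)\la}=e^{2\pi\ii(\P^-_{2\pi\ii}\la_0|\mu)}$ explicitly from \eqref{cla}, which is exactly the ``main obstacle'' you flagged), and verify the trivial action on $\hhp$ via \eqref{Gcong} by observing that $\N h=(1-\ph)\la_0$ cancels the $\lieh_0$-part of $(1-\ph)\la$ while the remainder lies in $(\lieh_0)^\perp$. The only differences are cosmetic: the paper works in $\hat G$ and checks commutation with arbitrary $e^h$ ($h\in\lieh_0$) rather than just $\tau_\nu$, and it proves centrality first and then uses it to streamline the verification of $g_\la g_\mu=g_{\la+\mu}$, whereas you do the subgroup check first.
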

\begin{proof}
We will first prove that the elements $g_\la$ are central in $\hat G$.
Since $g_\la=c U_{(1-\ph)\la}\tau_{\la}$ for some $c\in\CC^\times$, it will be enough to show that
$U_{(1-\ph)\la}\tau_{\la}$ is in the center of $\hat G$.
Then
\begin{equation*}
\begin{split}
e^{h}U_{(1-\ph)\la}&= e^{(h|(1-\ph)\la)}U_{(1-\ph)\la} e^{ h},\\
e^{h}\tau_{\la} & = e^{-(\N h | \frac{1}{\N}(\ph-1)\la)}\tau_{\la} e^{h} = e^{-(h|(1-\ph)\la)}\tau_{\la}  e^{h}
\end{split}
\end{equation*}
imply that $e^{h} U_{(1-\ph)\la}\tau_{\la} e^{-h}=U_{(1-\ph)\la}\tau_{\la}$ for $h \in \lieh_0$.
The same calculation shows that
\begin{equation*}
(U_{(1-\ph)\la}\tau_{\la}) h_{(m+\N)} (U_{(1-\ph)\la}\tau_{\la})^{-1} = h_{(m+\N)}.
\end{equation*}
Hence, $g_\la$ acts trivially on $\hhp$.

Next, we will show that $U_\mu U_{(1-\ph)\la}\tau_{\la} U_\mu^{-1}=U_{(1-\ph)\la}\tau_{\la}$ for $\la,\mu\in Q$. 
Using \eqref{phinv} and \eqref{lasum}, we compute
\begin{equation*}
\begin{split}
e^{2\pi\ii(\frac{1-\si}{1-\ph}\mu_*|(1-\ph)\la)}&=e^{2\pi\ii((1-\si)\frac{1-\ph^{-1}}{1-\ph}\mu_*|\la)}\\
&=e^{-2\pi\ii((1-\si)\mu_*|\ph\la)}
=e^{2\pi\ii(\mu_0|\ph\la)}.
\end{split}
\end{equation*}
By a similar albeit longer computation, we obtain
\begin{equation*}
e^{(\frac{1}{\N}(\pi\ii\N\frac{1+\ph}{1-\ph}-1)\mu_0|(1-\ph)\la)} = e^{-\pi \ii (\mu_0|(1+\ph)\la)}e^{2\pi\ii(\P^-_{2\pi\ii}\la_0|\mu)}.
\end{equation*}
Then using \eqref{cla}, we find
\begin{equation*}
C_{\mu,(1-\ph)\la} =e^{2\pi\ii(\P^-_{2\pi\ii}\la_0|\mu)}
\end{equation*}
and
\begin{equation*}
U_\mu U_{(1-\ph)\la}\tau_{\la}U_\mu^{-1}=C_{\mu,(1-\ph)\la}e^{-2\pi\ii(\P_{2\pi\ii}^-\la_0|\mu)}U_{(1-\ph)\la}\tau_{\la} = U_{(1-\ph)\la}\tau_{\la}.
\end{equation*}
Therefore, $g_\la$ is central in $\hat G$.

Finally, to prove that $N_\ph$ is a subgroup of $G$, we will show that $g_\la g_\mu = g_{\la+\mu}$.
We compute
\begin{equation*}
\begin{split}
e^{2\pi\ii (c_{\la}+c_\mu)} &= e^{-\pi\ii((\P^+_{2\pi\ii}+\P^-_{2\pi\ii})\la_0|\mu)}e^{2\pi\ii c_{\la+\mu}}, \\
\tau_{\la}U_\mu\tau_{\mu} &= e^{\pi\ii((\P^+_{2\pi\ii}+\P^-_{2\pi\ii})\la_0|\mu)}U_\mu\tau_{\la+\mu}, \\
(U_{\ph\la}U_{\ph\mu})^{-1}U_\la U_\mu &= \frac{\eta(\la+\mu)}{\eta(\la)\eta(\mu)}U_{\ph(\la+\mu)}^{-1}U_{\la+\mu}.
\end{split}
\end{equation*}
From these and the fact that $g_\la$ is central in $\hat G$, we obtain
 \begin{equation*}
 \begin{split}
 g_\la g_\mu& = \eta(\mu)e^{2\pi \ii c_\mu}U_{\ph\mu}^{-1}g_\la U_{\mu}\tau_{\mu}\\
 &=\eta(\la)\eta(\mu)e^{2\pi\ii(c_\la+c_\mu)}U_{\ph\mu}^{-1}U_{\ph\la}^{-1}U_{\la}\tau_{\la}U_\mu\tau_{\mu}\\
 &=\eta(\la+\mu) e^{2\pi\ii c_{\la+\mu}}U_{\ph(\la+\mu)}^{-1}U_{\la+\mu}\tau_{\la+\mu}\\
 &= g_{\la+\mu},
\end{split}
 \end{equation*}
which completes the proof.
\end{proof}

\subsection{Main Theorem}

 Consider the quotient group $G_\ph = G / N_\ph$. 
 Then having a $G_\ph$-module is equivalent to having a $G$-module on which the $\ph$-equivariance \eqref{phieq} holds. 
 By \prref{pnph}, the action \eqref{Gcong} of $G$ on $\hhp$ induces an action of $G_\ph$, and we can define the notion
 of an $(\hhp,G_\ph)$-module as in \seref{subtgg}.

The following theorem, which is the main result of the paper, reduces the classification of $\ph$-twisted $V_Q$-modules to the classification of restricted $(\hhp,G_\ph)$-modules.

\begin{theorem}\label{main}
Every\/ $\ph$-twisted\/ $V_Q$-module is naturally a restricted\/ $(\hhp, G_\ph)$-module of level\/ $1$. Conversely, any restricted\/ $(\hhp, G_\ph)$-module of level\/ $1$ extends to a\/ $\ph$-twisted\/ $V_Q$-module.
\end{theorem}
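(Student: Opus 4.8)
The plan is to prove the two directions separately, with most of the work already done in Sections~\ref{sphtwvo} and~\ref{sptvo}. For the first direction, suppose $M$ is a $\ph$-twisted $V_Q$-module. Restricting $Y$ to the currents $Y(a,z)$, $a\in\lieh$, makes $M$ a $\ph$-twisted $B^1(\lieh)$-module, hence a restricted $\hhp$-module of level~$1$ by \cite[Theorem 6.3]{Bak}. Then \thref{twistedvop} produces operators $U_\la$ on $M$; define the action of $\hat G$ on $M$ by letting $U_\la$ act as these operators, $e^h$ ($h\in\lieh_0$) act as $e^{h_{(0+\N)}}$, and $\CC^\times$ act by scalars. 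One checks that the defining relations \eqref{multinG} of $\hat G$ hold: the first is \eqref{thmult} specialized appropriately (or rather the Lie-algebra relation \eqref{hbrext} exponentiated), the second is \eqref{aUcomm} exponentiated, and the third is the first equality of \eqref{Uprod} in \thref{commutant}. The compatibility \eqref{Gcong} follows from \eqref{aUcomm} together with the commutation relations of $\hhp$. Finally, \eqref{phieq} of \thref{twistedvop} says precisely that each $g_\la\in N_\ph$ acts trivially on $M$, so the $\hat G$-action descends through $G$ to a $G_\ph$-action, and $M$ is a restricted $(\hhp,G_\ph)$-module of level~$1$.

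For the converse, start with a restricted $(\hhp,G_\ph)$-module $M$ of level~$1$. By \cite[Theorem 6.3]{Bak} the $\hhp$-structure extends uniquely to a $\ph$-twisted $B^1(\lieh)$-module structure, so the fields $Y(a,z)$ for $a\in\lieh$ are determined. Now \emph{define} $Y(e^\la,z)$ by the formula \eqref{twistedvop}, using the given operators $U_\la$ (lifting the $G_\ph$-action to the $G$-action, which is legitimate since $N_\ph$ acts trivially, so the value $U_\la$ is well defined up to the relations already imposed) and the operators $\th_\la$, $E_\la(z)$, $e^{\ze a_\la}$, $z^{b_\la}$ built from the Heisenberg action. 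Extend $Y$ to all of $V_Q$ by the $n$-th product identity \eqref{lognth}, i.e.\ generate all fields from the currents and the $Y(e^\la,z)$ via \eqref{TLMnthprod}; one must check this is consistent, which amounts to reproducing the lattice vertex algebra relations. The key points to verify are: (i) $Y(\vac,z)=\Id$, which is immediate since $\vac = e^0$ and $U_0=\Id$, $\th_0=E_0(z)=1$; (ii) $\ph$-equivariance \eqref{phequiv}, which for the currents is built into the $\hhp$-structure and for $Y(e^\la,z)$ follows by running the computation at the end of the proof of \thref{twistedvop} in reverse, using that the triviality of $g_\la$ in $G_\ph$ gives back \eqref{phieq}; (iii) locality of the collection $Y(V_Q)$, which reduces to locality of the $Y(e^\la,z)$ with each other and with the currents. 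Locality of $Y(e^\la,z)$ with the currents follows from \prref{emodes}, \prref{umodes}, and \eqref{thmult}; locality of $Y(e^\la,z_1)$ with $Y(e^\mu,z_2)$ is the heart of the matter.

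The main obstacle is precisely this mutual locality of the vertex operators $Y(e^\la,z)$, together with the compatibility of the $n$-th products \eqref{VAexpmult} --- i.e.\ checking that \eqref{voploc} holds in the constructed module with $N=\max(0,-(\la|\mu))$, and that $Y(e^\la_{(-1-(\la|\mu))}e^\mu,z)=\ep(\la,\mu)Y(e^{\la+\mu},z)$. The strategy is to compute both $(z_1-z_2)^N Y(e^\la,z_1)Y(e^\mu,z_2)$ and the $\mu\leftrightarrow\la$ reordering as rational-function-valued series: using \prref{ecomb} for the $E$-factors, \eqref{thmult} and \eqref{thU} for the $\th$-factors, \eqref{aUcomm} (exponentiated) to move the $U$'s past the Heisenberg exponentials, and finally the second equality of \eqref{Uprod} with the explicit cocycle $C_{\la,\mu}$ from \eqref{cla}. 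The whole point of \thref{commutant} is that $C_{\la,\mu}$ was extracted from exactly this locality constraint, so the two sides will agree once one verifies that the scalar prefactor $C_{\la,\mu}$ produced by commuting the $U$'s cancels the ratio of the analytic prefactors (the $B_{\la,\mu}$'s, the $(-1)^{|\la|^2|\mu|^2}$, and the branch factors $e^{\pi\ii(\la_0|\mu)}$ etc.) coming from reordering the exponential and $\th$ factors. Once locality and the single $n$-th product relation \eqref{VAexpmult} are in hand, the remaining axioms follow from the general reconstruction theorem for vertex algebras and its twisted analogue (the Borcherds identity framework of \cite{Bak}), since the currents and the $Y(e^\la,z)$ generate $V_Q$.
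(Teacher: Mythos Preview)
Your overall architecture matches the paper's proof closely: both directions are organized the same way, and you correctly identify the mutual locality of $Y(e^\la,z_1)$ and $Y(e^\mu,z_2)$ as the crux of the converse. The first direction is fine; the paper takes a slightly different but equivalent route by defining $\tau_\la$ on $M$ directly via \eqref{taulambda} rather than passing through $\hat G$.

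The gap is in your sketch of the locality argument. You invoke \prref{ecomb} ``for the $E$-factors'' and then speak of matching scalar prefactors (the $B_{\la,\mu}$, the sign $(-1)^{|\la|^2|\mu|^2}$, the branch factors). But \prref{ecomb} is a statement \emph{only at $z_1=z_2$}; the constants $B_{\la,\mu}$ are values there, not the $z$-dependent factor one actually needs. For $z_1\neq z_2$, commuting $E_\la(z_1)_-$ past $E_\mu(z_2)_+$ produces $\exp(C)$ with $C$ a Lerch transcendent series in $z_2/z_1$ (cf.\ \eqref{commsum}, \eqref{Rfunct}), convergent only for $|z_1|>|z_2|$. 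The opposite ordering gives a series in $z_1/z_2$, convergent for $|z_2|>|z_1|$; these are not rational functions (non-integer powers $(z_2/z_1)^{\al_0'}$ appear), so your phrase ``rational-function-valued series'' already signals the difficulty. Proving \eqref{Lvoploc} therefore requires showing that these two series are expansions of a single holomorphic function in complementary domains, and that is not an algebraic cancellation: the paper uses the Lerch reflection formula \eqref{lerchref} (together with \eqref{lerchshift}) to carry out this analytic continuation explicitly, treating the $\la_0$ and $(1-\si)\la_*$ contributions separately. Only after this computation does $C_{\la,\mu}$ emerge as the constant discrepancy between the two expansions---so \thref{commutant} does not by itself ``cancel'' anything here; rather, its formula for $C_{\la,\mu}$ is what one must recognize at the end of the analytic continuation.

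In short: the locality step is not bookkeeping of constants but an analytic continuation of $\Phi(z,1,a)$ across $|z|=1$. Your proposal would become complete if you replaced ``using \prref{ecomb} for the $E$-factors'' by this argument.
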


\begin{proof}
Let $M$ be a $\ph$-twisted $V_Q$-module.  
Then $Y(a,z)$ $(a \in \lieh)$ generate on $M$ the structure of a $\ph$-twisted module over the Heisenberg subalgebra $B^1(\lieh)\subset V_Q$.
Hence, $M$ is a restricted $\hhp$-module of level $1$ by \cite[Theorem 6.3]{Bak}.
We define the action of $U_\la$ $(\la \in Q)$ by \eqref{ulambda}, \eqref{Uops}.
The $\ph$-equivariance \eqref{phieq} is equivalent to $g_\la=\Id$ on $M$ for all $\la\in Q$ (cf.\ \eqref{glambda}).
This allows us to define $\tau_\la$ by 
\begin{equation}\label{taulambda}
\tau_\la = \eta(\la)^{-1} e^{-2\pi\ii c_\la} U_\la^{-1} U_{\ph\la}.
\end{equation}
Then the group relations of $G$ and the adjoint action \eqref{Gcong}  are satisfied due to \eqref{aUcomm}, \eqref{phieq}, and \eqref{Uprod}.  Thus $M$ is a restricted $(\hhp, G_\ph)$-module of level 1.

Conversely, assume $M$ is a restricted $(\hhp, G_\ph)$-module of level $1$. We define the logarithmic fields 
\begin{equation}\label{groupfields}
Y(a,z),\quad Y(e^\la,z), \qquad a \in \lieh, \ \la \in Q,
\end{equation}
by \eqref{tfield} and \eqref{twistedvop} respectively. Since the fields $Y(a,z)$ generate a $\ph$-twisted $B^1(\lieh)$-module structure on $M$ (see \cite[Theorem 6.3]{Bak}), 
they are local and satisfy the $\ph$-equivariance \eqref{phequiv}. 
The $\ph$-equivariance of $Y(e^\la,z)$ follows from \eqref{phieq}.
To show that $Y(a,z)$ is local with $Y(e^\la,z)$, we use \eqref{hvobrext}, \eqref{lafield} and compute
\begin{align*}
[Y(a,z_1),Y(e^\la,z_2)] 
&= \sum_{\al \in \A}\sum_{m \in \al} \bigl{(}z_1^{-m-1-\N} z_2^{m+\N} \pi_{\al}a \big{|}\la \bigr{)}Y(e^\la,z_2) \\
&= \de(z_1,z_2) \Bigl{(} \Bigl{(}\frac{z_2}{z_1}\Bigr{)}^{\mathcal{S}+\N}a\Big{|}\la\Bigr{)}Y(e^\la,z_2).
\end{align*}
Thus locality follows from the fact that $(z_1-z_2)\de(z_1,z_2)=0$ (see \eqref{delta}).
We will prove the locality of $Y(e^\la,z)$  and $Y(e^\mu,z)$ later.

Assuming that the fields $Y(e^\la,z)$, $Y(e^\mu,z)$ are local, consider the subspace $\mathcal{W} \subset \LF(M)$ spanned by the fields \eqref{groupfields}. The smallest subspace $\widehat{\mathcal{W}} \subset \LF(M)$ containing $\mathcal{W}\cup\{\Id\}$ and closed under $D_\ze$ and all $n$-th products is a local collection, and the $n$-th products endow $\widehat{\mathcal{W}}$ with the structure of a vertex algebra with vacuum vector $\Id$ and translation operator $D_z$ (see \cite[Theorem 3.7]{Bak}). A $\ph$-twisted $V_Q$-module structure on $M$ is equivalent to a homomorphism of vertex algebras $V_Q\to \widehat{\mathcal{W}}$
satisfying the $\ph$-equivariance \eqref{phequiv}. 

It is well known that the lattice vertex algebra $V_Q$ can be generated by the elements $h=h_{(-1)}\vac$ and $e^\la$ ($h\in\lieh$, $\la \in Q$), subject to the following relations:
\begin{enumerate}
\item\label{r1} $h_{(n)}h' = \de_{n,1}(h|h')\vac,$ \;\, $n \geq 0$,
\item\label{r2} $h_{(n)}e^\la = \de_{n,0}(h|\la)e^\la$, \; $n \geq 0$,
\item\label{r3} $Te^\la = \la_{(-1)}e^\la$,
\item\label{r4} ${e^\la}_{(-(\la|\mu)-1)}e^\mu = \ep(\la,\mu)e^{\la+\mu}$.
\end{enumerate}
Thus to show that $M$ carries the structure of a $\ph$-twisted $V_Q$-module, it suffices to check that the same relations are satisfied by the twisted fields \eqref{groupfields}. 
Relations \eqref{r1} and \eqref{r2} for the twisted fields are equivalent to \eqref{hbrext} and \eqref{hvobrext}; \eqref{r3} is equivalent to \eqref{Dm1}; and \eqref{r4} is equivalent to \eqref{VOPprod2}. All of these equations hold for the twisted fields by construction. 

It only remains to prove the locality of $Y(e^\la,z)$ and $Y(e^\mu,z)$. We do so by proving that 
\begin{equation}\label{Lvoploc}
\begin{split}
(z_1&-z_2)^{-(\la|\mu)}Y(e^\la,z_1)Y(e^\mu,z_2) \\
&= (-1)^{|\la|^2|\mu|^2+(\la|\mu)}(z_2-z_1)^{-(\la|\mu)}Y(e^\mu,z_2)Y(e^\la,z_1),
\end{split}
\end{equation} 
where if $(\la|\mu)>0$, we expand $(z_1-z_2)^{-(\la|\mu)}$ in the domain $|z_1|>|z_2|$, and $(z_2-z_1)^{-(\la|\mu)}$ in the domain $|z_2|>|z_1|$.
Let 
\begin{equation*}
R_{\la,\mu}(z_1,z_2) = (z_1-z_2)^{-(\la|\mu)} \exp(C), 
\end{equation*}
where $C$ is given by \eqref{commsum}. 
Then by \eqref{elzemz}, the left-hand side of \eqref{Lvoploc} is equal to
\begin{equation*}
R_{\la,\mu}(z_1,z_2) U_\la \th_\la(\ze_1)U_\mu\th_\mu(\ze_2) A_{\la,\mu}(z_1,z_2),
\end{equation*}
with
\begin{equation*}
\begin{split}
A_{\la,\mu}(z_1,z_2) &= e^{\ze_1 a_\la} e^{\ze_2 a_\mu} z_1^{b_\la} z_2^{b_\mu} E_\la(z_1)_+E_\mu(z_2)_+E_\la(z_1)_-E_\mu(z_2)_- \\
&=A_{\mu,\la}(z_2,z_1).
\end{split}
\end{equation*}
Using \eqref{thU} and \eqref{Uprod}, we find
\begin{equation*}
U_\la \th_\la(\ze_1)U_\mu\th_\mu(\ze_2) = C_{\la,\mu}
\exp\Bigl(-\frac{1}{\N}\Bigl(\Bigl(\frac{z_2}{z_1}\Bigr)^\N-1\Bigr)\la_0\Big|\mu\Bigr) U_\mu\th_\mu(\ze_2)U_\la\th_\la(\ze_1).
\end{equation*}
Thus to prove \eqref{Lvoploc}, it suffices to show that $R_{\la,\mu}(z_1,z_2)$ and
\begin{equation}\label{localeq}
(-1)^{|\la|^2|\mu|^2+(\la|\mu)}C_{\la,\mu}^{-1}
\exp\Bigl(\frac{1}{\N}\Bigl(\Bigl(\frac{z_2}{z_1}\Bigr)^\N-1\Bigr)\la_0\Big|\mu\Bigr) R_{\mu,\la}(z_2,z_1)
\end{equation}
are expansions of the same holomorphic function in the domains $|z_1|>|z_2|$ and $|z_2|>|z_1|$, respectively.

From \eqref{lerchder} and the proof of \prref{ecomb}, we have
\begin{equation}\label{Rfunct}
\exp(C) = \exp\Bigl{(}- \Bigl{(}\frac{z_2}{z_1}\Bigr{)}^{\mathcal{S}'+\N} \Phi\Bigl{(}\frac{z_2}{z_1},1,\mathcal{S}'+\N\Bigr{)}\la\Big{|}\mu \Bigr{)},
\end{equation}
where the right-hand side of \eqref{Rfunct} is expanded in the domain $|z_1|>|z_2|$. 
In the same domain, we have
\begin{equation*}
(z_1-z_2)^{-(\la|\mu)} = z_1^{-(\la|\mu)}\exp\Bigl{(}-(\la|\mu)\ln\Bigl{(}1-\frac{z_2}{z_1}\Bigr{)}\Bigr{)}.
\end{equation*}
Thus $R_{\la,\mu}(z_1,z_2)$ is the expansion of
\begin{equation}\label{localfunction}
 z_1^{-(\la|\mu)}\exp\Bigl{(}- \Bigl{(}\ln\Bigl{(}1-\frac{z_2}{z_1}\Bigr{)}+\Bigl{(}\frac{z_2}{z_1}\Bigr{)}^{\mathcal{S}'+\N} \Phi\Bigl{(}\frac{z_2}{z_1},1,\mathcal{S}'+\N\Bigr{)}\Bigr{)}\la\Big{|}\mu \Bigr{)}
\end{equation}
for $|z_1|>|z_2|$.
We note that \eqref{localfunction} is regular at $z_1=z_2$. Indeed, by \prref{ecomb}, 
\begin{equation*}
\begin{split}
- \Bigl(\Bigl{(}\ln\Bigl{(}1
&-\frac{z_2}{z_1}\Bigr{)}+\Bigl{(}\frac{z_2}{z_1}\Bigr{)}^{\mathcal{S}'+\N} \Phi\Bigl{(}\frac{z_2}{z_1},1,\mathcal{S}'+\N\Bigr{)}\Bigr{)}\la\Big{|}\mu \Bigr{)}\Big{|}_{z_1=z_2=z}\\
&=\bigl{(}(\Psi(\mathcal{S}'+\N)+\ga)\la\big{|}\mu\bigr{)}.
\end{split}
\end{equation*}

It follows that the function \eqref{localfunction} is holomorphic for $z_2/z_1 \notin \RR$. To complete the proof of locality, it suffices to show that \eqref{localeq} is equal to \eqref{localfunction} expanded in the domain $|z_2|>|z_1|$, $z_2/z_1 \notin \RR$.
Observe that as functions
\begin{equation*}
z_1^{-(\la|\mu)}\exp\Bigl{(}-\ln\Bigl{(}1-\frac{z_2}{z_1}\Bigr{)}\Bigr{)} = (-1)^{(\la|\mu)}z_2^{-(\la|\mu)}\exp\Bigl{(}-\ln\Bigl{(}1-\frac{z_1}{z_2}\Bigr{)}\Bigr{)}
\end{equation*}
for $z_2/z_1 \notin \RR$. Next, we write $\la = \la_0 + (1-\si)\la_*$ as in \eqref{lasum}, and consider separately the two resulting factors in \eqref{localfunction}.

For the first one, we use \eqref{al0pr} and the expansion formula \eqref{lerchref} to obtain
\begin{equation*}
\begin{split}
\Bigl{(} &\Bigl{(}\frac{z_2}{z_1}\Bigr{)}^{\mathcal{S}'+\N} \Phi\Bigl{(}\frac{z_2}{z_1},1,\mathcal{S}'+\N\Bigr{)}(1-\si)\la_*\Big{|}\mu \Bigr{)}\\
&=\pi\ii\sgn(\omega(z_2/z_1)) \bigl((1-\si)\la_*\big|\mu\bigr)
+\bigl{(}\pi\cot(\pi(\mathcal{S}'+\N))(1-\si)\la_*\big{|} \mu \bigr{)}\\
&\quad+\Bigl{(}\Bigl{(}\frac{z_1}{z_2}\Bigr{)}^{\mathcal{S}'+\N} \Phi\Bigl{(}\frac{z_1}{z_2},1,\mathcal{S}'+\N\Bigr{)}(1-\si)\mu_*\Big{|}\la\Bigr{)}.
\end{split}
\end{equation*}
To simplify the second factor resulting from \eqref{localfunction},
notice that \eqref{lerchref} and \eqref{lerchshift} imply
\begin{align*}
z^{1+a} &\Phi(z,1,1+a) = z^{a}\Phi(z,1,a) - \frac{z^a}{a} \\
&= \pi\ii\sgn\omega(z) + \pi\cot(\pi a) + z^{a-1}\Phi(z^{-1},1,1-a) - \frac{z^a}{a} \,.
\end{align*}
Then
\allowdisplaybreaks
\begin{align*}
\Bigl{(}\Bigl{(}&\frac{z_2}{z_1}\Bigr{)}^{1+\N}\Phi\Bigl{(}\frac{z_2}{z_1},1,1+\N\Bigr{)}\la_0\Big{|}\mu\Bigr{)}\\
&=\pi\ii \sgn(\omega(z_2/z_1)) \, (\la_0|\mu)
-\Bigl(\frac{1}{\N} \Bigl(\Bigl(\frac{z_2}{z_1}\Bigr)^\N-1\Bigr)\la_0\Big|\mu\Bigr) \\
&\quad+\Bigl{(}\frac{1}{\N}\bigl(\pi\N\cot(\pi\N)-1\bigr)\la_0\Big{|}\mu\Bigr{)}\\
&\quad +\Bigl{(}\Bigl{(}\frac{z_1}{z_2}\Bigr{)}^{1-\N}\Phi\Bigl{(}\frac{z_1}{z_2},1,1-\N\Bigr{)}\la_0\Big{|}\mu\Bigr{)}\\
&=\pi\ii \sgn(\omega(z_2/z_1)) \, (\la_0|\mu)
+\Bigl(\frac{1}{\N} \Bigl(\Bigl(\frac{z_1}{z_2}\Bigr)^\N-1\Bigr)\mu_0\Big|\la\Bigr) \\
&\quad+\Bigl{(}\frac{1}{\N}\bigl(\pi\N\cot(\pi\N)-1\bigr)\la_0\Big{|}\mu\Bigr{)}\\
&\quad +\Bigl{(}\Bigl{(}\frac{z_1}{z_2}\Bigr{)}^{1+\N}\Phi\Bigl{(}\frac{z_1}{z_2},1,1+\N\Bigr{)}\mu_0\Big{|}\la\Bigr{)}.
\end{align*}
Plugging these into \eqref{localfunction} and using the expression for $C_{\la,\mu}$ from the proof of \thref{commutant},
we obtain \eqref{localeq}.
\end{proof}

\section{Examples of $\ph$-twisted $V_Q$-modules}\label{septvm}

In this section, we demonstrate the utility of the results of the previous section by working out two small-dimensional examples in detail.

\subsection{Example of a Rank $4$ Lattice}
We begin with the integral lattice
\begin{equation*}
Q = \Span_\ZZ\{\la_1,\la_2,\la_3,\la_4\},
\end{equation*}
with bilinear form $(\cdot | \cdot)\colon Q \times Q \to \ZZ$ defined by
\begin{equation*}
(\la_1|\la_4)=(\la_4|\la_1)=(\la_2|\la_3)=(\la_3|\la_2)=1,
\end{equation*}
and all other scalar products of basis elements are zero. We let $\ph\colon Q \to Q$ be the automorphism given by 
\begin{equation*}
\ph \la_1 = \la_1-\la_2, \qquad \ph \la_2 = \la_2, \qquad \ph\la_3 = \la_3+\la_4, \qquad \ph\la_4 = \la_4.
\end{equation*}
It is easy to check that $(\cdot|\cdot)$ is $\ph$-invariant. We let $\lieh = \CC \otimes_\ZZ Q $. Then we can write $\ph = e^{-2\pi\ii \N}$, where 
\begin{equation*}
\N\la_1 = \frac{1}{2\pi\ii}\la_2,\qquad \N \la_2 = 0, \qquad \N \la_3 = -\frac{1}{2\pi\ii}\la_4,\qquad \N\la_4=0,
\end{equation*}
and $\N^2\la_1 = \N^2 \la_3 = 0$. We use the $2$-cocycle
\begin{equation*}
\ep(\la_4,\la_1)=\ep(\la_3,\la_2)=-1,
\end{equation*}
and $\ep=1$ on all other pairs of basis vectors. This allows us to choose $\eta = 1$ in \eqref{etaep}, 
since $\ep(\la_i,\la_j)=\ep(\ph\la_i,\ph\la_j)$ for all $i,j$.

Recall that the $\ph$-twisted Heisenberg algebra $\hat{\h}_\ph$ has a triangular decomposition 
given by \eqref{wtriangle}.
We consider the $\hhp^0$-module
\begin{equation*} R=\CC[x_{1,0},q_1^{\pm 1},q_2^{\pm 1}],
\end{equation*}
with the action
\begin{align*}
\la_{1(0+\N)} &= x_{1,0}, & \la_{2(0+\N)} &= q_1\partial_{q_1}, \\
\la_{3(0+\N)} &= -\frac{1}{2\pi\ii}\partial_{x_{1,0}}, & \la_{4(0+\N)}&=q_2\partial_{q_2},
\end{align*}
and we let $M_\ph(R)$ be the corresponding generalized Verma module for $\hhp$. We label the action of $\hhp^-$ by the commuting variables:
\begin{align*}
\la_{1(-m+\N)} &= x_{1,m}, &\la_{2(-m+\N)}&=2\pi\ii \, x_{2,m},\\
\la_{3(-m+\N)}&=\frac{1}{2\pi\ii}x_{3,m}, & \la_{4(-m+\N)}&=x_{4,m},
\end{align*}
for $m\in \NN$. Then
\begin{equation*}
M_\ph(R) \cong \CC[x_{1,0},q_1^{\pm 1},q_2^{\pm 1},x_{i,m}]_{1\leq i \leq 4;m=1,2,\ldots},
\end{equation*}
and the action of $\hhp^+$ is given by
\begin{align*}
\la_{1(m+\N)} &= m\partial_{x_{4,m}}+\partial_{x_{3,m}},&
\la_{2(m+\N)}&=2\pi\ii \, m\partial_{x_{3,m}}, \\
\la_{3(m+\N)} &= \frac{1}{2\pi\ii}\bigl{(}m \partial_{x_{2,m}}-\partial_{x_{1,m}}\bigr{)},&
\la_{4(m+\N)} & = m \partial_{x_{1,m}},
\end{align*}
for $m\in \NN$.

We define the following operators on $M=M_\ph(R)[e^{\pm 2\pi\ii x_{1,0}}]$:
\begin{align*}
U_{\la_1} &= q_2, &U_{\la_2}& = (-1)^{q_1\partial_{q_1}}e^{-2\pi\ii x_{1,0}},\\
 U_{\la_3} &= q_1e^{-\frac{\pi\ii}{6}q_2\partial_{q_2}}, &U_{\la_4} &= (-1)^{q_2\partial_{q_2}}e^{-\partial_{x_{1,0}}},
\end{align*}
and $\tau_\la$ by \eqref{taulambda}.
It is straightforward to check that \eqref{aUcomm} holds, and the operators $U_\la$ satisfy the following commutation relations:
\begin{align*}
U_{\la_1}U_{\la_2}&=U_{\la_2}U_{\la_1},  &U_{\la_2}U_{\la_3}&=-U_{\la_3}U_{\la_2},\\
U_{\la_1}U_{\la_3}&=e^{\frac{\pi\ii}{6}}U_{\la_3}U_{\la_1}, & U_{\la_2}U_{\la_4}&= U_{\la_4}U_{\la_2},\\
U_{\la_1}U_{\la_4}&=-U_{\la_4}U_{\la_1}, & U_{\la_3}U_{\la_4}&=U_{\la_4}U_{\la_3},
\end{align*}
which agree with the relations \eqref{Uprod}.
Hence $M$ is a representation of the group $G$ from \deref{defG2}. It is also straightforward to check that the compatibility \eqref{Gcong} holds and all elements of $N_\ph$ act as the identity on $M$. Thus $M$ is a restricted $(\hhp,G_\ph)$-module of level 1.  By \thref{main}, the logarithmic fields 
\begin{equation*}
Y(a,z), \qquad a \in \lieh,
\end{equation*}
and 
\begin{equation*}
Y(e^{\la},z) = U_{\la}\th_{\la}e^{\ze a_{\la}}z^{b_{\la}}E_{\la}(z),\qquad \la \in Q,
\end{equation*}
generate a $\ph$-twisted $V_Q$-module structure on $M$. 

Explicitly, the logarithmic vertex operators corresponding to the generators $e^{\la_i}$ are:
\allowdisplaybreaks
\begin{align*}
Y(e^{\la_1},z) &= q_2 e^{\ze x_{1,0}}e^{-\frac{\ze^2}{4\pi\ii}q_1 \partial_{q_1}}
\exp\Bigl{(}\sum_{n =1}^\infty\Bigl{(}\frac{1}{n} x_{1,n}+\frac{1-n\ze}{n^2}x_{2,n}\Bigr{)}z^n\Bigr{)}\\
&\quad\times\exp\Bigl{(}-\sum_{n=1}^\infty \bigl{(}\partial_{x_{4,n}}-\ze\partial_{x_{3,n}} \bigr{)}z^{-n}\Bigr{)}, \\
 Y(e^{\la_2},z) &= (-1)^{q_1\partial_{q_1}}e^{-2\pi\ii x_{1,0}}z^{q_1 \partial_{q_1}}
 \exp \Bigl{(}2\pi\ii\sum_{n=1}^\infty x_{2,n}\frac{z^n}{n}\Bigr{)}\\
 &\quad\times\exp \Bigl{(} -2\pi\ii\sum_{n=1}^\infty \partial_{x_{3,n}}z^{-n}\Bigr{)}, \\
Y(e^{\la_3},z) & = q_1 e^{(\frac{\ze^2}{4\pi\ii}-\frac{\pi\ii}{6})q_2 \partial_{q_2}}e^{-\frac{\ze}{2\pi\ii}\partial_{x_{1,0}}}\\
&\quad\times\exp\Bigl{(} \frac{1}{2\pi\ii} \sum_{n=1}^\infty\Bigl{(} \frac{1}{n}x_{3,n}+\frac{n\ze-1}{n^2}x_{4,n}\Bigr{)}z^n\Bigr{)}\\
&\quad\times\exp\Bigl{(}-\frac{1}{2\pi\ii}\sum_{n=1}^\infty\Bigl{(}\partial_{x_{2,n}}+\ze \partial_{x_{1,n}} \Bigr{)}z^{-n} \Bigr{)}, \\
Y(e^{\la_4},z) & = (-1)^{q_2 \partial_{q_2}}e^{-\partial_{x_{1,0}}}z^{q_2 \partial_{q_2}}
\exp\Bigl{(}\sum_{n=1}^\infty x_{4,n}\frac{z^n}{n} \Bigr{)}
\exp \Bigl{(}-\sum_{n=1}^\infty \partial_{x_{1,n}}z^{-n} \Bigr{)}.
\end{align*}

The vectors
\begin{equation*}
v_1=\la_1, \qquad v_2 = \frac{\la_2}{2\pi\ii}, \qquad v_3 = 2\pi\ii \la_3, \qquad v_4 = \la_4
\end{equation*}
form a basis for $\lieh$ for which $\ph$ and $(\cdot|\cdot)$ are as in \exref{ex:symm1}. Let $\om\in \lieh$ be the conformal vector given by \eqref{fbomega}. Then the action of the Virasoro operator $L_0$ on $M$ is given by \prref{prop:bvir} with $\al_0=0$:
\begin{equation*}
\begin{split}
L_0 &= \sum_{i=1}^4\sum_{n=1}^\infty n x_{i,n}\partial_{x_{i,n}}+ \sum_{n=1}^\infty \bigl{(} x_{4,n}\partial_{x_{3,n}}-x_{2,n}\partial_{x_{1,n}}\bigr{)}\\
&\qquad +x_{1,0}q_2\partial_{q_2}+\frac{1}{2\pi\ii}q_1\partial_{q_1}\partial_{x_{1,0}}.
\end{split}
\end{equation*}

\subsection{Example of a Rank $3$ Lattice}

 For an example on which $\N$ acts as a single Jordan block, we revisit Example 6.10 from \cite{Bak}. Let $\lieh$ be the Cartan subalgebra of a type $A_1^{(1)}$ affine Kac--Moody algebra. The dual space $\lieh^*$ has a basis $\{\al_1,\de, \La_0\}$ with a nondegenerate symmetric bilinear form $(\cdot|\cdot)$ defined by:
\begin{equation*}
(\al_1|\al_1)=2, \qquad (\de|\La_0)=(\La_0|\de)=1,
\end{equation*}
and the other products of basis vectors equal to 0. Consider the integral lattice $Q = \Span_\ZZ\{\al_1,\de, \La_0\}$. We let $\ph=t_{\al_1}$ be the element of the affine Weyl group which acts on $\lieh^*$ by 
\begin{equation*}
\ph\al_1 = \al_1-2\de, \qquad  \ph\de = \de, \qquad \ph\La_0= \La_0+\al_1-\de.
\end{equation*}
Then $\ph$ is an automorphism of the lattice, and $(\cdot|\cdot)$ is $\ph$-invariant. 
We can write $\ph=e^{-2\pi\ii\N}$, where 
\begin{equation*}
\N\al_1 = \frac{\de}{\pi\ii}, \qquad \N\de = 0, \qquad \N\La_0 = -\frac{\al_1}{2\pi\ii}\,, \qquad \N^2 \La_0 = \frac{\de}{2\pi^2},
\end{equation*}
and $\N^2\al_1 = \N^3\La_0 = 0$. We use the $2$-cocycle $\ep$ with 
\begin{equation*}
\ep(\al_1,\al_1)=\ep(\de,\La_0) = -1,
\end{equation*}
and $\ep=1$ on all other pairs of generators. Again we can assume $\eta=1$ on $Q$.

We identify $\lieh$ with its dual space and write $\lieh=\CC \otimes_{\ZZ}Q$. We consider the $\hhp^0$-module
\begin{equation*} R=\CC[x_{1,0},q^{\pm 1}],
\end{equation*}
with the action
\begin{equation*}
\La_{0(0+\N)}=-\frac{\sqrt{2}}{2\pi\ii} \,x_{1,0}, \qquad \al_{1(0+\N)}=-\sqrt{2} \, \partial_{x_{1,0}}, \qquad \de_{(0+\N)} = q \partial_q.
\end{equation*}
We let $M_\ph(R)$ be the corresponding generalized Verma module for $\hhp$. We label the action of $\hhp^-$ by the commuting variables:
\begin{equation*}
\La_{0(-m+\N)} = -\frac{\sqrt{2}}{2\pi\ii} \, x_{1,m},\quad \al_{1(-m+\N)} = \sqrt{2}\, x_{2,m}, \quad \de_{(-m+\N)} = -\frac{2\pi\ii}{\sqrt{2}}\,x_{3,m}, 
\end{equation*}
for $m\in \NN$. Then
\begin{equation*}
M_\ph(R)\cong \CC[x_{1,0},q^{\pm 1},x_{i,m}]_{1\leq i\leq 3; m=1,2,\ldots},
\end{equation*}
and the action of $\hhp^+$ is given by
\begin{equation*}
\begin{split}
\La_{0(m+\N)}& = -\frac{\sqrt{2}}{2\pi\ii}\bigl{(}m \partial_{x_{3,m}}+\partial_{x_{2,m}}\bigr{)},\\
\al_{1(m+\N)}& = \sqrt{2} \bigl{(}m \partial_{x_{2,m}}-\partial_{x_{1,m}}\bigr{)},\\
\de_{(m+\N)} & = -\frac{2\pi\ii}{\sqrt{2}} \,m\partial_{x_{1,m}}.
\end{split}
\end{equation*}

Let $M=M_\ph(R)[e^{\pm\sqrt{2}\, x_{1,0}}]$.
We define the operators $U_\la$ on $M$ by
\begin{equation*}
U_{\La_0}=q, \qquad U_{\al_1}=-\ii e^{\frac{\pi\ii}{3}q\partial_q} e^{-\sqrt{2}\, x_{1,0}},\qquad U_\de=(-1)^{q\partial_q} e^{\frac{2\pi\ii}{\sqrt{2}}\partial_{x_{1,0}}},
\end{equation*}
and $\tau_\la$ by \eqref{taulambda}.
These operators agree with \eqref{Uprod}. In particular,
\begin{equation*}
\begin{split}
U_{\La_0}U_{\al_1}&=e^{-\frac{\pi \ii}{3}}U_{\al_1}U_{\La_0},\\
U_{\La_0}U_{\de}&=-U_{\de}U_{\La_0},\\
U_{\al_1}U_{\de}&=U_{\de}U_{\al_1}.
\end{split}
\end{equation*}
As in the previous example, $M$ is a restricted $(\hhp,G_\ph)$-module of level $1$, and hence can be extended to be a $\ph$-twisted $V_Q$-module. 

Explicitly, the logarithmic vertex operators corresponding to the generators are:
\begin{align*}
Y&(e^{\La_0},z)=q e^{-\frac{\sqrt{2}}{2\pi\ii}\ze x_{1,0}}e^{-\frac{\sqrt{2}}{4\pi\ii}\ze^2\partial_{x_{1,0}}+\frac{\ze^3}{12\pi^2}q\partial_q}e^{\frac{\ze^3}{24\pi^2}}\\
&\times \exp\Bigl{(} -\frac{\sqrt{2}}{2\pi\ii}\sum_{n=1}^\infty\Bigl{(}\frac{1}{n}x_{1,n}+\frac{1-n\ze}{n^2} x_{2,n}-\frac{n^2\ze^2-2n\ze+2}{2 n^3}x_{3,n}\Bigr{)}z^n\Bigr{)}\\
&\times \exp\Bigl{(}\frac{\sqrt{2}}{2\pi\ii}\sum_{n=1}^\infty\Bigl{(} \partial_{x_{3,n}}-\ze\partial_{x_{2,n}}-\frac{\ze^2}{2}\partial_{x_{1,n}}\Bigr{)}z^{-n} \Bigr{)}, \\
Y&(e^{\al_1},z)= -\ii e^{\frac{\pi\ii}{3}q\partial_q} e^{-\sqrt{2}\, x_{1,0}}e^{-\sqrt{2}\ze\partial_{x_{1,0}}-\frac{\ze^2}{2\pi\ii}q\partial_q}\\
&\qquad\times \exp\Bigl{(}\sqrt{2}\sum_{n=1}^\infty\Bigl{(}\frac{1}{n}x_{2,n}+\frac{n\ze-1}{n^2}x_{3,n}\Bigr{)}z^n\Bigr{)}\\
&\qquad\times\exp\Bigl{(}-\sqrt{2}\sum_{n=1}^\infty\Bigl{(}\partial_{x_{2,n}}+\ze \partial_{x_{1,n}}\Bigr{)}z^{-n}\Bigr{)}, \\
Y&(e^\de,z)=(-1)^{q\partial_q} e^{\frac{2\pi\ii}{\sqrt{2}}\partial_{x_{1,0}}}z^{q\partial_q}
\exp\Bigl{(}-\frac{2\pi \ii}{\sqrt{2}}\sum_{n=1}^\infty x_{3,n}\frac{z^n}{n}\Bigr{)}\\
&\qquad\times\exp\Bigl{(}\frac{2\pi\ii}{\sqrt{2}}\sum_{n=1}^\infty \partial_{x_{1,n}}z^{-n}\Bigr{)}.
\end{align*}
After a change of variables (cf.\ \cite[Remark 2.2]{BW}) and a certain reduction, the above operators $Y(e^{\al_1},z)$ and $Y(e^\de,z)$ are related to Milanov's vertex operators from \cite{M}.

The vectors
\begin{equation*}
v_1 = -\frac{2\pi\ii}{\sqrt{2}}\La_0, \qquad v_2 = \frac{\al_1}{\sqrt{2}}, \qquad v_3 = -\frac{\sqrt{2}}{2\pi\ii}\de
\end{equation*}
form a basis for $\lieh$ for which $\ph$ and $(\cdot|\cdot)$ are as in \exref{ex:symm2}. Again we let $\om\in \lieh$ be the conformal vector given by \eqref{fbomega}; then the action of the Virasoro operator $L_0$ on $M$ is given by \prref{prop:bvir} with $\al_0=0$:
\begin{equation*}
\begin{split}
L_0 & = \sum_{i=1}^3\sum_{n=1}^\infty n x_{i,n}\partial_{x_{i,n}}+\sum_{n=1}^\infty\bigl{(}x_{3,n}\partial_{x_{2,n}}-x_{2,n}\partial_{x_{1,n}} \bigr{)}\\
&\qquad -\frac{\sqrt{2}}{2\pi\ii}x_{1,0}q\partial_q+\frac{1}{2}\partial_{x_{1,0}}^2.
\end{split}
\end{equation*}

\section*{Acknowledgements}
The authors wish to thank the anonymous referee for carefully reading the paper and for their thoughtful comments and questions. The first author is supported in part by a Simons Foundation grant 279074.



\bibliographystyle{amsalpha}

\end{document}